\newtheorem{thm}{Theorem}[section]
\newtheorem{lemma}{Lemma}[section]
\newtheorem{cor}{Corollary}[section]
\newtheorem*{xrem}{Remark}
\numberwithin{equation}{section}
\begin{document}
	\allowdisplaybreaks[3]
	\baselineskip=17pt
  \title{\bf Some congruences involving  generalized Bernoulli numbers \\
  	and Bernoulli polynomials}
  
   \author{Ni Li\\
   	 School of Mathematics and Statistics,\\
   	 Northwestern
   	 Polytechnical University\\
   	Xi'an, Shaanxi, 710072,\\
   	People's Republic of China\\
   	E-mail: 2741227558@qq.com\\
   	\and 
   	Rong Ma\\
   	School of Mathematics and Statistics, \\
   	Northwestern
   	Polytechnical University\\
   	Xi'an, Shaanxi, 710072,\\
   	People's Republic of China\\
   	E-mail: marong@nwpu.edu.cn}
   	 \date{}
   	\maketitle
   	\renewcommand{\thefootnote}{}
   	\footnote{\emph{Key words and phrases}:  congruences, generalized Bernoulli numbers, Bernoulli polynomials, binomial coefficients.}
   	\footnote{This work is supported by Natural Science Basic Research Project of Shaanxi Province (2021JM-044).} 
   	\renewcommand{\thefootnote}{\arabic{footnote}}
   	\setcounter{footnote}{0}
   	
   	\begin{abstract}
   		Let $[x]$ be the integral part of $x$, $n>1$ be a positive integer and $\chi_n$ denote the trivial Dirichlet character modulo $n$. In this paper, we use an identity established by Z. H. Sun to  get congruences of $T_{m,k}(n)=\sum_{x=1}^{[n/m]}\frac{\chi_n(x)}{x^k}\left(\bmod n^{r+1}\right)$ for $r\in \{1,2\}$, any positive integer $m $ with $n \equiv \pm 1 \left(\bmod m \right)$ in terms of  Bernoulli polynomials. As its an application, we also obtain some new congruences  involving binomial coefficients modulo $n^4$ in terms of generalized Bernoulli numbers.
   	\end{abstract}
	\section{Introduction}
	Let $n$, $r\ge 2$ be positive integers with $(n,r)=1$  and $q_r(n)$ denote the Euler quotient, i.e., 
    $$
     q_r(n)=\frac{r^{\phi(n)}-1}{n},$$
	where $\phi$ is the Euler totient function. For the trivial Dirichlet character $\chi_n$ modulo $n$, the generalized Bernoulli numbers $\{B_{s,\chi_n}\}$ are defined by 
	$$
	\sum_{x=1}^{n}\frac{\chi_n(x)te^{xt}}{e^{nt}-1}
	=\sum_{s=0}^{\infty}B_{s,\chi_n}\frac{t^s}{s!}.
	$$
	 Moreover, we have  the identity $B_{s,\chi_n}=B_s\prod_{p \mid n}(1-p^{s-1})$, where $B_s$ is the $s$th Bernoulli number. On the other hand, Bernoulli polynomials $\{B_s(x)\}$ are defined by $$
	B_s(x)=\sum_{k=0}^{s}\left(\begin{array}{c}
		s \\
     	k
	\end{array}\right)B_kx^{s-k}\quad (s\ge 0).
	$$\par 
	It is interesting  to investigate congruences of sums $T_{m,k}(n)=\sum_{x=1}^{[n/m]}\frac{\chi_n(x)}{x^k}$ involving the Euler quotient, generalized Bernoulli numbers 
	and Bernoulli polynomials  modulo integer powers. For example, in 1938, Lehmer (see \cite{E.L}) established the famous  congruence
	 \begin{equation}\tag{1.1}
	\sum_{i=1}^{(p-1) / 2} \frac{1}{i} \equiv-2 q_2(p)+p q_2^2(p) \quad\left(\bmod p^2\right)
	\end{equation}
   for any odd prime $p$. Applying this congruence and other similar congruences, Lehmer obtained some results in studying the Fermat's Last Theorem (see \cite{P.R}). The proof of (1.1) mainly relied on Bernoulli polynomials of fractional arguments. In 2002, Cai (see \cite{Cai}) used an identity involving generalized Bernoulli numbers  proved by Szmidt, Urbanowicz and  Zagier (see \cite[(6)]{SUZ}) and obtained a more general congruence for any odd integer $n > 1$, that is
		\begin{equation}\tag{1.2}
		\sum_{i=1}^{(n-1) / 2} \frac{\chi_n(i)}{i} \equiv-2 q_2(n)+nq_2^2(n) \quad\left(\bmod n^2\right).
	\end{equation}\par
   In 2008, Sun (see \cite{Szh}) determined  the following congruence 
   \begin{equation}\tag{1.3}
   	\sum_{x=1}^{[p / 4]} \frac{1}{x^2} \equiv(-1)^{\frac{p-1}{2}}\left(8 E_{p-3}-4 E_{2 p-4}\right)+\frac{14}{3} p B_{p-3} \quad\left(\bmod p^2\right)
   \end{equation}
  for prime $p > 5$, where $E_n$ is the $n$th Euler number (see \cite{Szh07}). In 2012, Kanemitsu, Urbanowicz and Wang  (see \cite{KUW}) generalized Sun's congruence (1.3) by using Cai's technique and showed that
\begin{multline}\tag{1.4}
		\sum_{0<i<n / 4} \frac{\chi_n(i)}{i^2} \equiv 8\left(n B_{n \phi(n)-2} \prod_{p \mid 4 n}\left(1-p^{n \phi(n)-3}\right)\right.\\
	\left. +\frac{1}{2}(-1)^{(n-1) / 2} E_{n \phi(n)-2} \prod_{p \mid n}\left(1-(-1)^{(p-1) / 2} p^{n \phi(n)-2}\right)\right)\quad (\bmod n^2)
	\end{multline}
     for positive odd integer $n>3$. Later in 2015, Kanemitsu, Kuzumaki and Urbanowicz (see \cite{KKU}) also adopted the identity proved in \cite{SUZ} and obtained some new congruences of the sums $T_{m,k}(n)=\sum_{x=1}^{[n/m]}\frac{\chi_n(x)}{x^k}\left(\bmod n^{r+1}\right)$ for $r\in \{0,1,2\}$, all divisors $m$ of 24. Recently in 2019, Cai, Zhong and Chern (see \cite{CZC}) showed $T_{m,2}(n) \left(\bmod n\right)$ and $T_{m,1}(n) \left(\bmod n^2\right)$ for $m \in \{2,3,4,6\}$ by introducing a new generalization of Euler's totient. Congruences for sums $T_{m,k}(n)=\sum_{x=1}^{[n/m]}\frac{\chi_n(x)}{x^k}$  not only play an important role in studying the first case of Fermat's last theorem, but also help to generalize some congruences involving binomial coefficients.\par
     For example, in 2002, using (1.2), Cai (see \cite[Theorem 2]{Cai}) established the following congruence for an arbitrary positive integer $n>1$,
     \begin{equation}\tag{1.5}
     	\prod_{d \mid n}\left(\begin{array}{c}
     		d-1 \\
     		(d-1) / 2
     	\end{array}\right)^{\mu(n / d)} \equiv(-1)^{\phi(n) / 2} 4^{\phi(n)} \begin{cases}\left(\bmod n^3\right) & \text { if } 3 \nmid n, \\
     		\left(\bmod n^3 / 3\right) & \text { if } 3 \mid n,\end{cases} 
     \end{equation}
    which is a generalization of the well-known Morley's congruence (see \cite{M})
    \begin{equation}\tag{1.6}
    	(-1)^{(p-1) / 2}\left(\begin{array}{c}
    		p-1 \\
    		(p-1) / 2
    	\end{array}\right) \equiv 4^{p-1}\quad(\bmod p^3)
    \end{equation}
   for any prime $p\ge 5$.
   Morley's congruence has a profound impact in combinatorial number theory. In 2008, applying (1.3), Sun (see \cite[Theorem 3.8]{Szh}) proved
    $$
   (-1)^{\left[\frac{p}{4}\right]}\left(\begin{array}{c}
   	p-1 \\
   	{\left[\frac{p}{4}\right]}
   \end{array}\right) \equiv 1+3 p q_2(p)+p^2\left(3 q_2(p)^2-(-1)^{\frac{p-1}{2}} E_{p-3}\right) \quad\left(\bmod p^3\right)
   $$
   for prime $p>5$. In 2019, Cai, Zhong and Chern (see  \cite[Theorem 1.2]{CZC}) showed that 
    \begin{equation}\tag{1.7} 
    	\prod_{d \mid n}\left(\begin{array}{c}
    		kd-1 \\
    		(d-1) / 2
    	\end{array}\right)^{\mu(n / d)} \equiv(-1)^{\phi(n) / 2} 4^{k\phi(n)} \begin{cases}\left(\bmod n^3\right) & \text { if $3 \nmid n$,}  \\
    		\left(\bmod n^3 / 3\right) & \text { if $3 \mid n,$} \end{cases}
    \end{equation}
      for any positive integer $k$ and odd integer $n>1$.  \par
     In this paper, we study $T_{m,k}(n)=\sum_{x=1}^{[n/m]}\frac{\chi_n(x)}{x^k} $ by applying the identity proved by Sun (see \cite[Theorem 2.1]{Szh07}), which is 
     \begin{equation}\tag{1.8}
     \sum_{\substack{x=0 \\ x \equiv r(\bmod m)}}^{p-1} x^k=\frac{m^k}{k+1}\left(B_{k+1}\left(\frac{p}{m}+\left\{\frac{r-p}{m}\right\}\right)-B_{k+1}\left(\left\{\frac{r}{m}\right\}\right)\right),
     \end{equation}
     where $p$, $m \in \mathbb{N}$, $k$, $ r \in \mathbb{Z}$ with $k \geqslant 0$ and $\{x\}$ is the decimal part of $x$. We get   congruences of $T_{m,k}(n)=\sum_{x=1}^{[n/m]}\frac{\chi_n(x)}{x^k} \left(\bmod n^{r+1}\right)$ for $r\in \{1,2\}$, any positive integer $m \ge 2$ with  $n \equiv \pm 1 \left(\bmod m\right)$. Moreover, we use  these congruences to get some new congruences  involving binomial coefficients modulo $n^4$. For example, for any positive integers $k\ge 1$,  $n>1$ with $(n,6)=1$, we have
     \begin{multline}
     \prod_{d \mid n}\left(\begin{array}{c}
     	kd-1 \\
     	(d-1) / 2
     \end{array}\right)^{\mu(n / d)}\\ 
     \quad  \equiv(-1)^{\frac{\phi(n)}{2}}\left\{4^{k\phi(n)}-\frac{k(7-14k+8k^2)}{4}n^3\frac{B_{\phi(n)-2,\chi_n}}{\phi(n)-2}\right\}\quad (\bmod n^4)\notag
     \end{multline}
     and
     \begin{multline}
     \prod_{d \mid n}\left(\begin{array}{c}
     	kd+[vd /2] \\
     	(vd-1) / 2
     \end{array}\right)^{\mu(n / d)}\\
      \equiv4^{-k \phi(n)}+\frac{k(7v^2+14kv+8k^2)}{4}n^3\frac{B_{\phi(n)-2,\chi_n}}{\phi(n)-2}\quad (\bmod n^4),\notag
     \end{multline}
      where $v$ is a positive odd integer. At last, using $T_{1,k}(n)\left(\bmod n^{r+1}\right)$ for $r\in \{1,2\}$, we have
      \begin{equation}
      	\prod_{d \mid n}\left(\begin{array}{l}
      		u d \\
      		v d
      	\end{array}\right)^{\mu(n / d)} \equiv 1+\frac{uv(v-u)}{2}B_{\phi (n^3)-2,\chi_n}n^3 \quad (\bmod n^4),\notag
      \end{equation}	
       where $n >1$ is a positive integer with $(n,6)=1,$ $u$, $v$ are positive integers  with $u>v$.
      \section{Basic lemmas}
       In this section, we introduce the following lemmas will be used later. We begin with the divisibility of Bernoulli polynomials.
     \begin{lemma}[See {\cite{Szh}}]
     Suppose that $k$, $p \in \mathbb{N}$ with $p>1$. If $x$, $y \in \mathbb{Z}_p$, then $p B_k(x) \in \mathbb{Z}_p$ and $\left(B_k(x)-\right.$ $\left.B_k(y)\right) / k \in \mathbb{Z}_p$. If $p$ is an odd prime such that $p-1 \nmid k$, then $B_k(x) / k \in \mathbb{Z}_p$.
     \end{lemma}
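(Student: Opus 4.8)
The plan is to reduce all three assertions to a single integrality statement about the power-sum polynomial $g(x) := (B_k(x) - B_k)/k$ together with the von Staudt--Clausen theorem on the denominators of Bernoulli numbers. Recall the Faulhaber identity $\sum_{i=M}^{N-1} i^{k-1} = (B_k(N) - B_k(M))/k$ for integers $0 \le M \le N$, which is the $m=1$ special case of the identity (1.8) quoted above. Taking $M=0$ gives $g(N) = \sum_{i=0}^{N-1} i^{k-1} \in \mathbb{Z}$ for every $N \in \mathbb{N}$, so $g$ is a rational polynomial that is integer-valued on $\mathbb{N}$; hence in the binomial basis $g(x) = \sum_{i} c_i \binom{x}{i}$ with all $c_i \in \mathbb{Z}$ (a finite sum).

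First I would prove the core fact that $g(x) \in \mathbb{Z}_p$ for every $x \in \mathbb{Z}_p$. Each binomial coefficient $\binom{x}{i}$ is $p$-integral for $x \in \mathbb{Z}_p$: working one prime at a time, for a prime $q \mid p$ and $x$ with $v_q(x) \ge 0$ one has $\binom{x}{i} \in \mathbb{Q} \cap \mathbb{Z}_q = \mathbb{Z}_{(q)}$ by the standard Mahler-basis argument, and intersecting over $q \mid p$ gives $\binom{x}{i} \in \mathbb{Z}_p$. Hence $g(x) = \sum_i c_i \binom{x}{i} \in \mathbb{Z}_p$. The assertion $(B_k(x) - B_k(y))/k \in \mathbb{Z}_p$ is then immediate, since it equals $g(x) - g(y)$.

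For the bound $p B_k(x) \in \mathbb{Z}_p$, I would write $B_k(x) = k\,g(x) + B_k$, so that $p B_k(x) = pk\,g(x) + p B_k$; the first summand lies in $\mathbb{Z}_p$ because $g(x) \in \mathbb{Z}_p$, and for the second the von Staudt--Clausen theorem shows the denominator of $B_k$ is squarefree, whence $v_q(B_k) \ge -1$ and so $v_q(p B_k) \ge 0$ for every prime $q \mid p$, i.e. $p B_k \in \mathbb{Z}_p$. For the last assertion I would instead write $B_k(x)/k = g(x) + B_k/k$; here $g(x) \in \mathbb{Z}_p$ already, so the claim reduces to $B_k/k \in \mathbb{Z}_p$ under the hypothesis that $p$ is an odd prime with $p-1 \nmid k$. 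This is the classical $p$-integrality of $B_k/k$: the cases $k$ odd (where $B_k = 0$ for $k \ge 3$, and $B_1 = -1/2 \in \mathbb{Z}_p$) are trivial, while the even case follows from the Kummer congruences.

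The step I expect to be the main obstacle is precisely this last input. Von Staudt--Clausen by itself yields only $v_p(B_k) \ge 0$ when $p - 1 \nmid k$, which makes $B_k$ itself $p$-integral but does not control $B_k/k$ when $p \mid k$; promoting this to $v_p(B_k) \ge v_p(k)$ is exactly the content of the Kummer-type congruences and is where genuine arithmetic, rather than the formal bookkeeping of the other two parts, is needed. By contrast, the passage from integer arguments to $x, y \in \mathbb{Z}_p$ and the reduction of the composite-$p$ case to a prime-by-prime statement are routine.
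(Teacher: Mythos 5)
The paper never proves this lemma: it is imported verbatim from Z.-H. Sun's work (the citation \cite{Szh} in the statement), so there is no internal argument to compare against, and your proof stands or falls on its own. On its own it is correct. Writing $B_k(x)=k\,g(x)+B_k$ with $g(x)=(B_k(x)-B_k)/k$, noting via Faulhaber's identity that $g$ is integer-valued on $\mathbb{N}$ and hence has integer coefficients in the binomial basis, and invoking the $q$-integrality of $\binom{x}{i}$ for $x\in\mathbb{Z}_{(q)}$, correctly yields $g(x)\in\mathbb{Z}_p$ for $x\in\mathbb{Z}_p$; the second assertion is then $g(x)-g(y)$, the first follows since the von Staudt--Clausen theorem makes the denominator of $B_k$ squarefree (so $v_q(pB_k)\ge 0$ for each $q\mid p$), and the prime-by-prime reduction you use for composite $p$, with $\mathbb{Z}_p=\bigcap_{q\mid p}\mathbb{Z}_{(q)}$, is exactly right and matches the convention for $\mathbb{Z}_p$ with $p>1$ not necessarily prime that Sun (and this paper) use. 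This route through the Mahler/binomial basis is a clean, self-contained substitute for Sun's original derivation, which reaches the same integrality through the translation identity $B_k(x+1)-B_k(x)=kx^{k-1}$ and reduction of $x$ modulo powers of $p$; the two mechanisms are essentially equivalent in content, both resting on von Staudt--Clausen plus one genuinely arithmetic input for the third assertion.

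The one point to repair is an attribution that, as stated, is mildly circular: the $p$-integrality of $B_k/k$ for an odd prime $p$ with $p-1\nmid k$ is Adams' theorem (equivalently, it follows from the Voronoi-type congruences), and it is normally proved \emph{before} the Kummer congruences --- the standard formulations of Kummer's congruences presuppose exactly this integrality, so deriving it ``from the Kummer congruences'' inverts the usual logical order unless one states Kummer's congruence in the valuation form $v_p(B_k/k - B_{k'}/k')\ge 1$ and argues from a small representative $k'$ with $0<k'<p-1$. Since you explicitly isolated this as the single external input, swapping the citation to Adams' theorem (or supplying the short Voronoi-congruence argument) is a one-line fix rather than a gap; with that change the proposal is a complete and correct proof of the lemma.
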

     \begin{xrem}
     \rm{In particular, for $x=0$, we get the von Staudt-Clausen theorem (see \cite{A2002}).} 
     \end{xrem}
     \begin{lemma}[See {\cite{MOS}}]
     	 Let $x$ and $y$ be variables and $n \in \mathbb{N}^+$. Then
     	 \begin{enumerate}[label=\upshape(\roman*), leftmargin=*, widest=iv]
     	 	\item $B_{2 n+1}=0\left(n \ge 1\right)$,\label{it:1}
     	 	\item $B_n(1-x)=(-1)^n B_n(x),$\label{it:2}
     	 	\item $B_n(x+y)=\sum_{r=0}^n\left(\begin{array}{l}n \\ r\end{array}\right) B_{n-r}(y) x^r$,\label{it:3}
     	 	\item $E_{n-1}(x)=\dfrac{2^n}{n}\left(B_n\left(\dfrac{x+1}{2}\right)-B_n\left(\dfrac{x}{2}\right)\right)$.\label{it:4}	
     	 \end{enumerate}
     \end{lemma}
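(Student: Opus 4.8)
The plan is to obtain all four identities uniformly from the exponential generating functions $\sum_{n\ge 0} B_n(x)\,t^n/n! = te^{xt}/(e^t-1)$ and $\sum_{n\ge 0} E_n(x)\,t^n/n! = 2e^{xt}/(e^t+1)$ for the Bernoulli and Euler polynomials, recalling that $B_n = B_n(0)$; this is the route underlying \cite{MOS}, and each assertion reduces to a short manipulation of these series followed by comparison of the coefficients of $t^n/n!$.

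For part (i), I would add $t/2$ to the Bernoulli series at $x=0$ and use $t/(e^t-1)+t/2 = (t/2)\coth(t/2)$, which is an even function of $t$; since the only odd-degree term of $t/(e^t-1)$ is the linear one coming from $B_1=-1/2$, every higher odd coefficient must vanish, giving $B_{2n+1}=0$ for $n\ge 1$. For part (ii), I would replace $t$ by $-t$ in the generating function and multiply numerator and denominator by $e^t$, which carries $te^{xt}/(e^t-1)$ into $te^{(1-x)t}/(e^t-1)$; matching coefficients then yields $(-1)^nB_n(x)=B_n(1-x)$. For part (iii), I would factor $te^{(x+y)t}/(e^t-1)=e^{xt}\cdot te^{yt}/(e^t-1)$ and expand $e^{xt}=\sum_r (xt)^r/r!$, so the Cauchy product against $\sum_s B_s(y)t^s/s!$ produces exactly the binomial convolution $\sum_{r=0}^n \binom{n}{r}B_{n-r}(y)x^r$ as the coefficient of $t^n/n!$. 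This is consistent with the polynomial definition of $B_s(x)$ recorded in the introduction, which is the special case $y=0$.

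The main obstacle is part (iv), where the Euler polynomials must be coaxed out of a difference of Bernoulli polynomials. I would first compute the generating function of $B_n((x+1)/2)-B_n(x/2)$, which telescopes neatly: the numerator factors as $e^{(x+1)t/2}-e^{xt/2}=e^{xt/2}(e^{t/2}-1)$, while the denominator factors as $e^t-1=(e^{t/2}-1)(e^{t/2}+1)$, so the common factor cancels and leaves $te^{xt/2}/(e^{t/2}+1)$. The delicate point is the reindexing: setting $s=t/2$ rewrites this as $s\sum_{m\ge0}E_m(x)s^m/m!$, and converting back through $s^{m+1}=t^{m+1}/2^{m+1}$ together with the shift $n=m+1$ introduces the factor $n/2^n$ in front of $E_{n-1}(x)$. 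Comparing coefficients of $t^n/n!$ then gives $B_n((x+1)/2)-B_n(x/2)=(n/2^n)E_{n-1}(x)$, which is the asserted identity after solving for $E_{n-1}(x)$.
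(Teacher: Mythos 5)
Your proof is correct: all four identities follow exactly as you describe from the generating functions $te^{xt}/(e^t-1)$ and $2e^{xt}/(e^t+1)$, and in particular the reindexing in (iv) — cancelling $e^{t/2}-1$ to get $te^{xt/2}/(e^{t/2}+1)$ and then extracting the factor $n/2^n$ via $s=t/2$ and $n=m+1$ — is carried out without error. The paper itself offers no proof, simply citing \cite{MOS} for this standard lemma, and your generating-function derivation is precisely the classical route underlying that reference, so there is nothing to reconcile between the two.
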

     \begin{lemma}[See {\cite{MOS}}]
     Let $n \in \mathbb{N}$, then
     $$
     B_{2 n}\left(\frac{1}{4}\right)=B_{2 n}\left(\frac{3}{4}\right)=\frac{2-2^{2 n}}{4^{2 n}} B_{2 n},\quad 
     B_{2 n}\left(\frac{1}{3}\right)=B_{2 n}\left(\frac{2}{3}\right)=\frac{3-3^{2 n}}{2 \cdot 3^{2 n}} B_{2 n}
     $$
     and$$
     \quad B_{2 n}\left(\frac{1}{6}\right)=B_{2 n}\left(\frac{5}{6}\right)=\frac{\left(2-2^{2 n}\right)\left(3-3^{2 n}\right)}{2 \cdot 6^{2 n}} B_{2 n} .
     $$
     \end{lemma}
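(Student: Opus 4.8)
The plan is to reduce all three identities to the multiplication (distribution) formula
\[
\sum_{j=0}^{m-1} B_s\!\left(x+\tfrac{j}{m}\right)=m^{1-s}B_s(mx),
\]
combined with the reflection formula $B_s(1-x)=(-1)^sB_s(x)$ from Lemma~2.2. First I would establish the multiplication formula from the generating function $\tfrac{t e^{xt}}{e^t-1}=\sum_s B_s(x)\tfrac{t^s}{s!}$: summing the generating functions of the shifts $x+j/m$ collapses the geometric series $\sum_{j=0}^{m-1}e^{jt/m}=\tfrac{e^t-1}{e^{t/m}-1}$, leaving $\tfrac{t\,e^{xt}}{e^{t/m}-1}$, and the substitution $t=m\tau$ turns this into $m\sum_s B_s(mx)\tfrac{\tau^s}{s!}$; matching the coefficient of $\tau^s/s!$ (the left side contributing $m^s\sum_j B_s(x+j/m)$) gives the formula. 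Specializing to $m=2,3$ at $x=0$ and using $B_s(0)=B_s$ then yields the base values $B_{2n}(1/2)=(2^{1-2n}-1)B_{2n}$ and, after applying reflection to identify $B_{2n}(2/3)=B_{2n}(1/3)$, the relation $2B_{2n}(1/3)=(3^{1-2n}-1)B_{2n}$. The latter already rearranges to $\tfrac{3-3^{2n}}{2\cdot 3^{2n}}B_{2n}$, which settles the thirds.

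For the quarter arguments, reflection gives $B_{2n}(1/4)=B_{2n}(3/4)$ directly (since $3/4=1-1/4$ and $2n$ is even), and the multiplication formula with $m=2$, $x=1/4$ reads $B_{2n}(1/4)+B_{2n}(3/4)=2^{1-2n}B_{2n}(1/2)$. Substituting the base value for $B_{2n}(1/2)$ gives $2B_{2n}(1/4)=2^{1-2n}(2^{1-2n}-1)B_{2n}$, and a routine simplification of the powers of $2$ rewrites the right-hand side as $\tfrac{2-2^{2n}}{4^{2n}}B_{2n}$.

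For the sixths I would bootstrap from the thirds rather than invoke the $m=6$ formula. Reflection gives $B_{2n}(5/6)=B_{2n}(1/6)$, and the multiplication formula with $m=2$, $x=1/6$ reads $B_{2n}(1/6)+B_{2n}(2/3)=2^{1-2n}B_{2n}(1/3)$, since $1/6+1/2=2/3$. Replacing $B_{2n}(2/3)$ by $B_{2n}(1/3)$ collapses this to $B_{2n}(1/6)=(2^{1-2n}-1)B_{2n}(1/3)$; inserting the closed form for $B_{2n}(1/3)$ and using $2^{2n}3^{2n}=6^{2n}$ produces $\tfrac{(2-2^{2n})(3-3^{2n})}{2\cdot 6^{2n}}B_{2n}$, as claimed.

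The arguments are mechanical once the multiplication formula is available, so the only real content is that formula together with the reflection symmetry; I expect the main (modest) obstacle to be the bookkeeping of the powers of $2$, $3$ and $6$ so that each final fraction matches the stated normalization exactly. An alternative for the sixths is to apply the multiplication formula with $m=6$ at $x=0$ and reduce the six resulting values using reflection and the identities for $1/2$, $1/3$, $2/3$; this requires more cancellation, so I would favor the two-step route above.
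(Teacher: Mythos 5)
Your proposal is correct, but note that the paper supplies no proof of this lemma at all: the statement is quoted directly from the cited reference \cite{MOS} (a standard handbook of special-function formulas), so your argument is not a variant of the paper's proof but a self-contained replacement for an external citation. Your route is the standard one and every step checks out: the Raabe multiplication theorem $\sum_{j=0}^{m-1}B_s\bigl(x+\tfrac{j}{m}\bigr)=m^{1-s}B_s(mx)$ does follow from the generating function exactly as you describe (the geometric sum collapses to $\tfrac{e^t-1}{e^{t/m}-1}$ and the rescaling $t=m\tau$ matches coefficients correctly, the left side carrying the factor $m^s$); the specializations $m=2,3$ at $x=0$ with the reflection $B_{2n}(1-x)=B_{2n}(x)$ give $B_{2n}(1/2)=(2^{1-2n}-1)B_{2n}$ and $B_{2n}(1/3)=\tfrac{3-3^{2n}}{2\cdot 3^{2n}}B_{2n}$; the case $m=2$, $x=1/4$ gives $2B_{2n}(1/4)=2^{1-2n}B_{2n}(1/2)$, which simplifies to $\tfrac{2-2^{2n}}{4^{2n}}B_{2n}$ since $4^{2n}=2^{4n}$; and your bootstrap $B_{2n}(1/6)+B_{2n}(2/3)=2^{1-2n}B_{2n}(1/3)$ (from $m=2$, $x=1/6$) together with $B_{2n}(2/3)=B_{2n}(1/3)$ yields $B_{2n}(1/6)=(2^{1-2n}-1)B_{2n}(1/3)=\tfrac{(2-2^{2n})(3-3^{2n})}{2\cdot 6^{2n}}B_{2n}$, with the powers of $2$, $3$, $6$ combining exactly as claimed. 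As a bonus, all three closed forms also hold at $n=0$ under your derivation, consistent with the lemma's hypothesis $n\in\mathbb{N}$. What your approach buys is a complete, elementary derivation from a single principle (Raabe plus reflection, the latter already available as Lemma 2.2(ii)); what the paper's citation buys is only brevity. Your preference for the two-step route over the $m=6$ specialization at $x=0$ is sound, since the latter requires reducing six terms via reflection and the $1/2$, $1/3$ values rather than two.
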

     \begin{lemma}
     Let $p>3$ be a prime, $d$, $k$, $m$, $l\in \mathbb{Z}^+$ with $1 \le k<{\phi(p^{3l})-3}$ and $p\nmid m$, $p\nmid d$, $s'=\phi(p^{3l})-k$, then
     \begin{multline}
     	\sum_{\substack{x=1 \\p\nmid x}}^{[dp^l/m]} \frac{1}{x^k}
     	 \equiv 
     		\begin{aligned}
     		\begin{cases}
     		\frac{B_{s'+1}\left(\left\{\frac{-dp^l}{m}\right\}\right)}{s'+1}+B_{s'}\left(\left\{\frac{-dp^l}{m}\right\}\right)\frac{dp^l}{m}\\
     		\quad -\frac{k}{2}B_{s'-1}\left(\left\{\frac{-dp^l}{m}\right\}\right)\frac{(dp^l)^2}{m^2}\quad(\bmod p^{3l})& \text {if $2\mid k$, $p^l\in {I(k,2)}$,} \\
     		\frac{B_{s'+1}\left(\left\{\frac{-dp^l}{m}\right\}\right)-B_{s'+1}}{\phi(s'+1}+B_{s'}\left(\left\{\frac{-dp^l}{m}\right\}\right)\frac{dp^l}{m}\\
     		\quad -\frac{k}{2}B_{s'-1}\left(\left\{\frac{-dp^l}{m}\right\}\right)\frac{(dp^l)^2}{m^2}\quad (\bmod p^{3l})& \text {if $2\nmid k$,} 
     	\end{cases}	
     		\end{aligned} \notag
   \end{multline}
     where $I(k,\gamma)=\{n > 1: p-1 \nmid k+\gamma, \text{if $p \mid n$}\}.$
     \end{lemma}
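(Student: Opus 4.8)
The plan is to convert the reciprocal-power sum into a genuine power sum, evaluate that power sum by Sun's identity (1.8), and then read off the three leading terms from a Taylor expansion of the resulting Bernoulli polynomial via Lemma 2.2(iii).

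First, since $p\nmid x$ throughout the range of summation, Euler's theorem gives $x^{\phi(p^{3l})}\equiv1\pmod{p^{3l}}$, so with $s'=\phi(p^{3l})-k$ we have $1/x^k\equiv x^{s'}\pmod{p^{3l}}$. Writing $N=[dp^l/m]$ and noting $[N/p]=[dp^{l-1}/m]$, I split off the multiples of $p$:
\[
\sum_{\substack{x=1\\p\nmid x}}^{N}\frac1{x^k}\equiv\sum_{x=1}^{N}x^{s'}-p^{s'}\sum_{y=1}^{[N/p]}y^{s'}\pmod{p^{3l}}.
\]
By Lemma 2.1 the inner power sum is a $p$-adic integer, and the bound on $k$ keeps $s'$ large enough that the factor $p^{s'}$ annihilates the second sum modulo $p^{3l}$; this leaves only the full power sum $\sum_{x=1}^{N}x^{s'}$.

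Next I apply (1.8) with modulus $m$, residue $0$ and upper bound $dp^l$; after dividing by $m^{s'}$ this evaluates the full power sum as
\[
\sum_{x=1}^{N}x^{s'}=\frac1{s'+1}\left(B_{s'+1}\left(\frac{dp^l}{m}+\left\{\frac{-dp^l}{m}\right\}\right)-B_{s'+1}\right).
\]
Expanding $B_{s'+1}$ about $\{-dp^l/m\}$ with increment $dp^l/m$ by Lemma 2.2(iii), the term of index $r$ carries a factor $(dp^l/m)^r$, i.e. $p^{lr}$; the terms $r=0,1,2$ produce exactly $\tfrac{B_{s'+1}(\{-dp^l/m\})}{s'+1}$, $B_{s'}(\{-dp^l/m\})\tfrac{dp^l}{m}$ and $\tfrac{s'}{2}B_{s'-1}(\{-dp^l/m\})\tfrac{(dp^l)^2}{m^2}$. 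Since $\phi(p^{3l})\equiv0\pmod{p^l}$ gives $s'\equiv-k\pmod{p^l}$, the factor $p^{2l}$ lets me replace $\tfrac{s'}{2}$ by $-\tfrac{k}{2}$, matching the stated third term. The stray $-B_{s'+1}/(s'+1)$ vanishes when $k$ is even, because then $s'+1$ is odd and $\ge3$ so $B_{s'+1}=0$ by Lemma 2.2(i), and is retained when $k$ is odd; this is precisely the dichotomy in the statement.

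The main obstacle is the $p$-adic size bookkeeping, in particular showing that every Taylor term with $r\ge3$ vanishes modulo $p^{3l}$. Such a term carries $p^{lr}$ with $lr\ge3l$, but the accompanying value $B_{s'+1-r}(\{-dp^l/m\})$ is only guaranteed $p$-integral after division by its index (Lemma 2.1), so I must certify integrality term by term. For the critical $r=3$ coefficient $B_{s'-2}(\{-dp^l/m\})$: when $k$ is odd, $s'-2$ is odd, hence $p-1\nmid s'-2$ automatically and Lemma 2.1 yields integrality; when $k$ is even I genuinely need $p-1\nmid s'-2$, which, since $s'-2\equiv-(k+2)\pmod{p-1}$, is exactly the hypothesis $p^l\in I(k,2)$. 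This is what accounts for the extra condition $p^l\in I(k,2)$ appearing only in the even case.
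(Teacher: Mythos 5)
Your proposal is correct and follows essentially the same route as the paper's own proof: Euler's theorem to convert $1/x^k$ into $x^{s'}$ with the multiples of $p$ absorbed by the factor $p^{s'}$ (this is exactly the paper's congruence (2.1)), then Sun's identity (1.8) with residue $0$ and upper limit $dp^l$, the expansion of Lemma~2.2(iii) about $\left\{\frac{-dp^l}{m}\right\}$, the replacement $\frac{s'}{2}\equiv-\frac{k}{2}\pmod{p^l}$, and Lemma~2.1 plus the parity of Bernoulli numbers to dispose of the tail and of $B_{s'+1}$ in the even case. Your explicit identification of the $r=3$ Taylor term as the reason the hypothesis $p^l\in I(k,2)$ is needed only for even $k$ is precisely the paper's observation that $p-1\nmid k+2$ holds automatically when $k$ is odd.
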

       \begin{proof}
        By Euler's theorem and $k<{\phi(p^{3l})-3}$, we have
       \begin{equation}\label{2.1}
       \sum_{\substack{x=1 \\p\nmid x}}^{[dp^l/m]} \frac{1}{x^k}\equiv \sum_{\substack{x=1 \\p\nmid x}}^{[dp^l/m]} x^{\phi(p^{3l})-k}\equiv \sum_{x=1}^{[dp^l/m]}x^{\phi(p^{3l})-k}\quad \left(\bmod p^{3l}\right).
       \end{equation}
   
       Taking $p=dp^l$ and $r=0$ in (1.8), from Lemma 2.2(iii), we have
       \begin{multline}
       	\sum_{x=1}^{[dp^l/m]}x^{\phi(p^{3l})-k}\\
       	\begin{aligned}
       		 &{}=\frac{1}{\phi(p^{3l})-k+1}\left(B_{\phi(p^{3l})-k+1}\left(\frac{dp^l}{m}+\left\{\frac{-dp^l}{m}\right\}\right)-B_{\phi(p^{3l})-k+1}\right)\\
       		&{}=\frac{B_{\phi(p^{3l})-k+1}\left(\left\{\frac{dp^l}{m}\right\}\right)-B_{\phi(p^{3l})-k+1}}{\phi(p^{3l})-k+1}  +B_{\phi(p^{3l})-k}\left(\left\{\frac{-dp^l}{m}\right\}\right)\frac{dp^l}{m}\\
       		&\quad  -\frac{k}{2}B_{\phi(p^{3l})-k-1}\left(\left\{\frac{-dp^l}{m}\right\}\right)\frac{(dp^l)^2}{m^2}\\
       		& \quad +\sum_{i=3}^{\phi(p^{3l})-k+1}\left(\begin{array}{l}\phi(p^{3l})-k \\ \quad\quad i-1\end{array}\right)\frac{p^{(i-3)l}}{i}B_{\phi(p^{3l})-k+1-i}\left(\left\{\frac{-dp^l}{m}\right\}\right)\frac{d^ip^{3l}}{m^i}.
       	\end{aligned}\notag
       \end{multline}
       When $2\mid k$, $p^l\in {I(k,2)}$ or $2 \nmid k$, we have $p-1\nmid k+2$. From Lemma 2.1 we see that $B_{\phi(p^{3l})-k-2}\left(\left\{\frac{-dp^l}{m}\right\}\right) \in \mathbb{Z}_p$,  $ \frac{p^{(i-3)l}}{i}B_{\phi(p^{3l})-k+1-i}\left(\left\{\frac{-dp^l}{m}\right\}\right)\in \mathbb{Z}_p$ for
       $i \ge 4$. Let $s'=\phi(p^{3l})-k$. Now combining with (2.1) and noticing $B_{2n+1}=0(n\ge 1)$, we obtain Lemma 2.4.
       \end{proof}
	\begin{lemma}
	Let $p \ge 3$ be a prime, $x\in\mathbb{Z}_p$, $\alpha$, $k$, $l\in \mathbb{Z}^+$ such that $k$, $l$ are even, $\alpha \le k-1 \le l-1$, and $k$, $l$ are not divisible by $p-1$, if  $k\equiv l\left (\bmod \phi(p^\alpha )\right),$ then
	$$
	\frac{B_k(x)}{k}\equiv \frac{B_l(x)}{l}\quad(\bmod p^\alpha ).$$
	As $x=0$, it becomes Kummer's congruence (see \cite{IR}).
	\end{lemma}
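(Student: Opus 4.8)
The plan is to reduce the assertion from all of $\mathbb{Z}_p$ to the nonnegative integers by a continuity argument, and then to handle the integer case by splitting $B_k(x)/k$ into its constant term (to which classical Kummer applies) plus a power sum (which Euler's theorem controls). To set this up I would first note that, since $p$ is an odd prime with $p-1\nmid k$ and $p-1\nmid l$, Lemma 2.1 guarantees $B_k(x)/k\in\mathbb{Z}_p$ and $B_l(x)/l\in\mathbb{Z}_p$ for every $x\in\mathbb{Z}_p$. Hence $g(x):=B_k(x)/k-B_l(x)/l$ defines a continuous map $g\colon\mathbb{Z}_p\to\mathbb{Z}_p$. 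Because $p^{\alpha}\mathbb{Z}_p$ is closed and the nonnegative integers are dense in $\mathbb{Z}_p$, it suffices to prove $g(t)\equiv 0\ (\bmod\ p^{\alpha})$ for every integer $t\ge 0$; the congruence at a general $x\in\mathbb{Z}_p$ then follows by taking a $p$-adic limit of integers.

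Fix an integer $t\ge 0$. Starting from the well-known difference relation $B_k(y+1)-B_k(y)=ky^{k-1}$ and summing over $y=0,1,\dots,t-1$, the sum telescopes to the power-sum representation $B_k(t)/k=B_k/k+\sum_{a=1}^{t-1}a^{k-1}$ (the $y=0$ term drops out since $k\ge 2$), and likewise for $l$. Subtracting gives
$$
g(t)=\Bigl(\tfrac{B_k}{k}-\tfrac{B_l}{l}\Bigr)+\sum_{a=1}^{t-1}\bigl(a^{k-1}-a^{l-1}\bigr).
$$
The bracketed difference of Bernoulli quotients is $\equiv 0\ (\bmod\ p^{\alpha})$ by the classical Kummer congruence (the case $x=0$, see \cite{IR}), which applies precisely because $k\equiv l\ (\bmod\ \phi(p^{\alpha}))$ and $p-1\nmid k$.

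For the remaining sum I would argue term by term, splitting on whether $p\mid a$. If $p\nmid a$, then $a$ is a unit modulo $p^{\alpha}$, so $a^{\phi(p^{\alpha})}\equiv 1$; combined with $k-1\equiv l-1\ (\bmod\ \phi(p^{\alpha}))$ this forces $a^{k-1}\equiv a^{l-1}\ (\bmod\ p^{\alpha})$. If instead $p\mid a$, then $v_p(a^{k-1})\ge k-1\ge\alpha$ and $v_p(a^{l-1})\ge l-1\ge\alpha$, so both powers vanish modulo $p^{\alpha}$; here the hypothesis $\alpha\le k-1\le l-1$ is exactly what is needed. In either case $a^{k-1}\equiv a^{l-1}\ (\bmod\ p^{\alpha})$, so the entire sum is $\equiv 0\ (\bmod\ p^{\alpha})$, and therefore $g(t)\equiv 0\ (\bmod\ p^{\alpha})$ for all integers $t\ge 0$. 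Specializing to $x=0$ recovers Kummer's congruence, as the statement records.

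The step I expect to require the most care is the passage from $\mathbb{Z}$ to $\mathbb{Z}_p$: one cannot simply replace $x$ by a nearby integer, since an integer-valued polynomial need not be congruence-preserving in the sense that $x\equiv t\ (\bmod\ p^{\alpha})$ would propagate to the values. The clean way is to phrase it as the observation that $g^{-1}(p^{\alpha}\mathbb{Z}_p)$ is a closed subset of $\mathbb{Z}_p$ containing the dense set of nonnegative integers, hence is all of $\mathbb{Z}_p$; this relies essentially on the $p$-integrality of $g$ supplied by Lemma 2.1. The other ingredients—the telescoping power-sum identity, the unit/non-unit case split governed by $\alpha\le k-1$, and the invocation of scalar Kummer for the constant term—are then routine.
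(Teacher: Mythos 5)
Your proof is correct, but it takes a genuinely different route from the paper's. The paper proves the lemma by specializing an identity of Z.~W. Sun (Corollary 1.3 of \cite{Szw}), which expresses $\frac{1}{k}\bigl(m^k B_k\bigl(\frac{x+a}{m}\bigr)-B_k(x)\bigr)$ modulo $q=p^{\alpha}$ as a weighted power sum $\sum_{j=0}^{q-1}\bigl(\bigl[\frac{a+jm}{q}\bigr]+\frac{1-m}{2}\bigr)(x+a+jm)^{k-1}$; taking $x=0$, discarding the terms with $p\mid a+jm$ (this is exactly where the paper, like you, needs $\alpha\le k-1$), and applying Euler's theorem to the remaining unit terms shows the right-hand sides for exponents $k$ and $l$ agree, giving the congruence at the rational arguments $a/m$ with $p\nmid m$. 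You replace Sun's identity with the elementary telescoping $B_k(t)/k=B_k/k+\sum_{a=1}^{t-1}a^{k-1}$ at nonnegative integers $t$, handle the power sum by the same unit/non-unit split, and then pass to all of $\mathbb{Z}_p$ by an explicit continuity-plus-density argument --- a step you actually treat more carefully than the paper does, since the paper states the lemma for $x\in\mathbb{Z}_p$ but verifies it only at rational points. What the paper's route buys is self-containedness: taking $a=0$ and $m$ a primitive root modulo $p^{\alpha}$ in Sun's identity also yields the scalar congruence $B_k/k\equiv B_l/l\pmod{p^{\alpha}}$, whereas you must import it from the literature, and there you should be slightly more careful: the classical Kummer congruence modulo $p^{\alpha}$ reads $(1-p^{k-1})B_k/k\equiv(1-p^{l-1})B_l/l\pmod{p^{\alpha}}$, and the Euler-factor-free form you invoke is false in general for $\alpha\ge 2$; it is valid here only because $\alpha\le k-1\le l-1$ forces both factors to be $\equiv 1\pmod{p^{\alpha}}$ (using $B_k/k,\,B_l/l\in\mathbb{Z}_p$ from Lemma 2.1). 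So the hypothesis $\alpha\le k-1$ is also needed in your constant-term step, not only in the $p\mid a$ case, and your parenthetical attributing the applicability of Kummer solely to $k\equiv l\pmod{\phi(p^{\alpha})}$ and $p-1\nmid k$ should be amended accordingly.
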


    \begin{proof}
   we recall an identity given by Sun, Z. W. (see \cite[Corollary 1.3]{Szw}). Let $a \in \mathbb{Z}$, $k$, $q$, $m\in\mathbb{Z}^+$, $(m,q)=1$, then
   \begin{multline}
    \frac{1}{k}\left(m^kB_k\left(\frac{x+a}{m}\right)-B_k(x)\right)\\
    \equiv \sum_{j=0}^{q-1}\left(\left[\frac{a+jm}{q}\right]+\frac{1-m}{2}\right)\left(x+a+jm\right)^{k-1}\quad (\bmod q).\notag
   \end{multline}
   Now we set $q=p^\alpha$, $p$ is a prime with $p \ge 3$, $\alpha \in\mathbb{Z}^+$, $x=0$, $(m,p)=1$. Since $\alpha \le k-1$, we have 
   \begin{multline}
   \frac{1}{k}\left(m^kB_k\left(\frac{a}{m}\right)-B_k\right)\\
   \equiv \sum_{\substack{j=0\\(p,a+jm)=1}}^{p^\alpha -1}\left(\left[\frac{a+jm}{p^\alpha }\right]+\frac{1-m}{2}\right)\left(a+jm\right)^{k-1}\quad(\bmod p^\alpha ).\notag
   \end{multline}
   Changing $k$ to $l$, similarly, we get the following congruence
   \begin{multline}
   		\frac{1}{l}\left(m^lB_l\left(\frac{a}{m}\right)-B_l\right)\\
   	\equiv \sum_{\substack{j=0\\(p,a+jm)=1}}^{p^\alpha -1}\left(\left[\frac{a+jm}{p^\alpha }\right]+\frac{1-m}{2}\right)\left(a+jm\right)^{l-1}\quad(\bmod p^\alpha ).\notag
   \end{multline}
   Because $k\nmid p-1$, $l\nmid p-1$ and $k\equiv l\left(\bmod \phi(p^\alpha )\right)$, we can easily get Lemma 2.5 from Lemma 2.1 and Euler's theorem.
    \end{proof}
   
 \begin{lemma}
  Let $n>1$ be a positive integer with $(n,6)=1$, $k\in \mathbb{N}$ and $1 \le k \le \phi_(n^3)-4$, then
  \begin{equation}
  	\sum_{i=1}^{n-1}\frac{\chi_n(i)}{i^k}\equiv \begin{cases} nB_{\phi (n^3)-k,\chi_n}\quad (\bmod n^3)&\text{if $2\mid k,n\in {I(k,2)},$}  \\
  		-\frac{1}{2}kn^2B_{\phi (n^3)-k,\chi_n}\quad (\bmod n^3)&\text{ if $2\nmid k,$}\notag 
  	\end{cases}
  \end{equation}
  where $I(k,\gamma)=\{n > 1:p-1 \nmid k+\gamma, \text{ if $p\mid n$ }\}$.
 \end{lemma}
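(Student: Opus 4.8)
The plan is to linearize the sum by Euler's theorem and then reduce everything to ordinary power sums evaluated through Sun's identity (1.8). Since $(i,n)=1$ forces $i^{\phi(n^3)}\equiv 1\pmod{n^3}$ and, by hypothesis, $S:=\phi(n^3)-k\ge 4>0$, we have $\tfrac{1}{i^k}\equiv i^{S}\pmod{n^3}$, so that $\sum_{i=1}^{n-1}\frac{\chi_n(i)}{i^k}\equiv\sum_{(i,n)=1}i^{S}\pmod{n^3}$. Expressing the coprimality condition via $\chi_n(i)=\sum_{d\mid\gcd(i,n)}\mu(d)$ and grouping the multiples of each $d\mid n$, I would reduce this to $\sum_{d\mid n}\mu(d)\,d^{S}\sum_{j=1}^{n/d}j^{S}$, i.e. to a weighted combination of genuine power sums.

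Next I would evaluate each inner sum. Taking $m=1$, $r=0$ and upper limit $n/d$ in (1.8) gives $\sum_{j=1}^{n/d}j^{S}=\frac{1}{S+1}\big(B_{S+1}(\tfrac{n}{d}+1)-B_{S+1}\big)$, and expanding $B_{S+1}(\tfrac{n}{d}+1)$ by Lemma 2.2(iii) (the constant term cancelling $B_{S+1}$) yields $\sum_{j=1}^{n/d}j^{S}=\frac{1}{S+1}\sum_{t=1}^{S+1}\binom{S+1}{t}B_{S+1-t}(1)\,(n/d)^{t}$, where $B_{S+1-t}(1)=B_{S+1-t}$ for every index that matters modulo $n^3$. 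Substituting this, interchanging the order of summation, and using $\sum_{d\mid n}\mu(d)d^{S-t}=\prod_{p\mid n}(1-p^{S-t})$ together with the identity $B_{s,\chi_n}=B_s\prod_{p\mid n}(1-p^{s-1})$ from the introduction, the divisor sums fold up into generalized Bernoulli numbers and I obtain the exact expansion
\[
\sum_{(i,n)=1}i^{S}=\sum_{t=1}^{S+1}\frac{1}{t}\binom{S}{t-1}B_{S+1-t,\chi_n}\,n^{t}.
\]

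It then remains to reduce this expansion modulo $n^3$ and to separate by the parity of $k$. Only $t=1,2$ survive: for $t\ge 3$ the factor $n^{t}$ together with the bound $v_p(B_{m})\ge -1$ supplied by Lemma 2.1 (von Staudt--Clausen) forces the term into $n^3\mathbb{Z}_p$, the hypothesis $(n,6)=1$ guaranteeing $p\ge 5$ so that the factors $\tfrac1t$ and $\binom{S}{t-1}$ are $p$-integral, and the hypothesis $n\in I(k,2)$ (that is, $p-1\nmid k+2$ for $p\mid n$) being exactly what makes the critical $t=3$ coefficient $B_{S-2,\chi_n}$ $p$-integral in the even case. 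This leaves $\sum_{(i,n)=1}i^{S}\equiv nB_{S,\chi_n}+\tfrac{S}{2}\,n^{2}B_{S-1,\chi_n}\pmod{n^3}$. Now Lemma 2.2(i) ($B_{2m+1}=0$) and $B_{s,\chi_n}=B_s\prod(1-p^{s-1})$ kill one of the two terms according to parity: when $k$ is even, $S-1$ is odd, $B_{S-1,\chi_n}=0$, and only $nB_{S,\chi_n}=nB_{\phi(n^3)-k,\chi_n}$ remains; when $k$ is odd, $S$ is odd, $B_{S,\chi_n}=0$, and since $\phi(n^3)=n^2\phi(n)$ gives $S\equiv -k\pmod{n^2}$, the surviving contribution becomes $-\tfrac{k}{2}\,n^{2}B_{S-1,\chi_n}$, i.e. a multiple of the even-index number $B_{\phi(n^3)-k-1,\chi_n}$ (the factor $2$ being invertible since $n$ is odd).

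The main obstacle is the truncation step. Showing rigorously that every $t\ge 3$ term vanishes modulo $n^3$ requires uniform $p$-adic valuation estimates on $\tfrac1t\binom{S}{t-1}B_{S+1-t,\chi_n}$, and it is precisely here that the arithmetic hypotheses enter: $(n,6)=1$ clears the small-prime denominators $2,3$ hidden in the binomial and harmonic factors, while $n\in I(k,2)$ secures integrality of the borderline $t=3$ contribution in the even case. By contrast, the parity cancellations and the replacement $S\equiv -k\pmod{n^2}$ are purely formal once the two-term congruence is in hand.
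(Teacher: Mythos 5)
Your argument is correct and, at its core, identical to the paper's: the paper quotes Cai's identity $\sum_{i=0}^{N-1}\chi_n(i)i^m=\frac{1}{m+1}\left(B_{m+1,\chi_n}(N)-B_{m+1,\chi_n}\right)$ and expands it into the very series $\sum_{t\ge 1}\frac{1}{t}\binom{S}{t-1}B_{S+1-t,\chi_n}n^{t}$ that you reach via M\"obius inversion plus (1.8), after which the von Staudt--Clausen truncation to $t\le 2$, the reduction $S\equiv -k \pmod{n^2}$, and the parity cancellation coincide step for step -- so your route is essentially a self-contained derivation of the cited identity, with the added merit of making explicit that $n\in I(k,2)$ is precisely what makes the borderline $t=3$ coefficient $B_{S-2,\chi_n}$ $p$-integral in the even case, a point the paper leaves implicit. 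Note also that your odd-case conclusion $-\frac{k}{2}n^{2}B_{\phi(n^3)-k-1,\chi_n}$ matches the paper's own final displayed congruence, which confirms that the index $\phi(n^3)-k$ in the lemma's printed odd case (an odd index, whose Bernoulli number vanishes) is a typo for $\phi(n^3)-k-1$.
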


   \begin{proof}
   By Euler's theorem we have$$ \sum_{i=1}^{n-1}\frac{\chi_n(i)}{i^k}\equiv \sum_{i=1}^{n-1}\chi_n(i)i^{\phi (n^3)-k}\quad (\bmod n^3).$$
   Since $\sum_{i=0}^{N-1} \chi_n(i) i^m=\dfrac{1}{m+1}\left(B_{m+1, \chi_n}(N)-B_{m+1, \chi_n}\right)$ (see \cite[(8)]{Cai}) and the von Staudt-Clausen theorem, we see that
   \begin{align}
   	\sum_{i=1}^{n-1}\chi_n(i)i^{\phi (n^3)-k}={}&\frac{B_{\phi (n^3)-k+1,\chi_n}(n)-B_{\phi (n^3)-k+1,\chi_n}}{\phi (n^3)-k+1}\notag\\
   	={}&\frac{\sum_{i=1}^{\varphi\left(p^3\right)-k+1} \left(\begin{array}{c}
   			\varphi\left(n^3\right)-k+1 \\
   			i
   		\end{array}\right) B_{\varphi\left(n^3\right)-k+1-i,\chi_n}n^i}{\phi (n^3)-k+1}\notag\\
   	={}&\sum_{i=1}^{\varphi\left(n^3\right)-k+1} \frac{1}{i}\left(\begin{array}{c}
   		\varphi\left(n^3\right)-k \\
   		i-1
   	\end{array}\right) B_{\varphi\left(n^3\right)-k+1-i,\chi_n}n^i\notag\\
   	\equiv{}& nB_{\phi (n^3)-k,\chi_n}-\frac{1}{2}kn^2B_{\phi (n^3)-k-1,\chi_n}\quad (\bmod n^3).\notag
   \end{align}
   Noting that $B_{2n+1}=0\left(n\ge 1\right)$, we can complete the  proof of Lemma 2.6.
   \end{proof}
   
   \section{Theorems}
   In order to express the following theorems briefly, we denote$$
   A_m(n,k)=J_m(n)\prod\limits_{p\mid n}\left(1-J_m(p)p^k\right)\frac{B_{k+1}(\frac{1}{m})}{k+1},$$
    $$A(n,k)=\prod\limits_{p\mid n}\left(1-p^k\right)\frac{B_{k+1}(\frac{1}{m})}{k+1},$$
   where $J_m(n)$ is the Jacobi symbol for any integers $n$, $m$. If $(n,m)=1$, then 
   $$J_m(n)=\left(\frac{n}{m}\right)=\begin{cases}1 & \text{if $n\equiv1\quad (\bmod m)$,} \\
   	-1 & \text{if $n\equiv -1\quad (\bmod m).$} 		
   \end{cases}$$
   \begin{thm}
   Let $n$, $m$, $k\in \mathbb{Z}^+$, $(n,6)=1$, $1 \le k < \phi(n^3)-3$ and $m \ge 2$, $n \equiv \pm 1 \left(\bmod m\right)$, $s=\phi(n^3)-k$, we have 
   \begin{multline}
   	\begin{aligned}
   		T_{m,k}(n)\equiv  
   		\begin{cases}
   			-A_m(n,s)+\frac{n}{m}s
   			A(n,s-1)\\ -\frac{k(k+1)n^2}{2m^2}A_m(n,s-2)\quad(\bmod n^3)& \text {if $2\mid k$, $n\in {I'(k,2)},$} \\
   			A(n,s)-\frac{B_{s+1,\chi_n}}{s+1}+\frac{kn}{m}A_m(n,s-1)\\
   			\quad +\frac{k(k+1)n^2}{2m^2}A(n,s-2)\quad(\bmod n^3) &\text {if $2\nmid k$, $(n,k+1)=1,$} \notag
   		\end{cases}
   	\end{aligned}
   \end{multline}
   where $I'(k,\gamma)=\{n > 1$: $p\nmid k$ \text{and $p-1 \nmid k+\gamma$}, \text{if $p\mid n$}\}.
   \end{thm}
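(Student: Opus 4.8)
The plan is to collapse $T_{m,k}(n)$ to a single incomplete power sum and evaluate it through Sun's identity (1.8), mirroring the proof of Lemma 2.4 but now modulo $n^3$. Since $\chi_n(x)\neq 0$ forces $(x,n)=1$, Euler's theorem gives $x^{-k}\equiv x^{s}\pmod{n^3}$ with $s=\phi(n^3)-k$, so that $T_{m,k}(n)\equiv\sum_{x=1,(x,n)=1}^{[n/m]}x^{s}\pmod{n^3}$. Writing $\chi_n(x)=\sum_{d\mid(x,n)}\mu(d)$ and substituting $x=dy$ turns this into $\sum_{d\mid n}\mu(d)d^{s}\sum_{y=1}^{[n/(md)]}y^{s}$. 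I would evaluate each inner sum by (1.8) with the roles ``$p$''$=n/d$, ``$r$''$=0$, ``$m$''$=m$ (treating the edge case $m\mid(n/d)$ separately via the floor identity), obtaining $\sum_{y=1}^{[n/(md)]}y^{s}=\frac{1}{s+1}\left(B_{s+1}\left(\frac{n}{md}+\left\{\frac{-n}{md}\right\}\right)-B_{s+1}\right)$, and then expand the shifted polynomial by Lemma 2.2(iii).

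After expansion, the divisor $d$ contributes (after multiplication by $\mu(d)d^{s}$) the three leading terms $\frac{B_{s+1}(\theta_d)-B_{s+1}}{s+1}$, $\frac{n}{md}B_{s}(\theta_d)$ and $\frac{s\,n^{2}}{2m^{2}d^{2}}B_{s-1}(\theta_d)$, where $\theta_d=\{-(n/d)/m\}$; every term with $r\ge 3$ carries an explicit factor $n^{r}$, and, after checking by Lemma 2.1 that the accompanying $\frac{1}{r}\binom{s+1}{r}B_{s+1-r}(\theta_d)$ is $p$-integral for each $p\mid n$, is $\equiv 0\pmod{n^3}$. This is exactly where the hypotheses $n\in I'(k,2)$ (even $k$) and $(n,k+1)=1$ (odd $k$) are used, since they force $p-1\nmid s-2$ and the $p$-integrality of the surviving Bernoulli quotients. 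The decisive simplification uses that, under the running hypothesis $n\equiv\pm1\pmod m$ (so that each squarefree $d\mid n$ is $\equiv\pm1\pmod m$ and each $J_m(d)=\pm1$ is defined), every $\theta_d$ equals $1/m$ or $(m-1)/m$; by the reflection formula Lemma 2.2(ii) one then gets $B_t(\theta_d)=B_t(1/m)$ when $t$ is even and $B_t(\theta_d)=-J_m(n)J_m(d)B_t(1/m)$ when $t$ is odd. Feeding this back and using $\sum_{d\mid n}\mu(d)J_m(d)d^{\ell}=\prod_{p\mid n}(1-J_m(p)p^{\ell})$ together with $\sum_{d\mid n}\mu(d)d^{\ell}=\prod_{p\mid n}(1-p^{\ell})$ collapses the divisor sum into $A_m(n,\cdot)$, $A(n,\cdot)$, and $-\frac{B_{s+1}}{s+1}\prod_{p\mid n}(1-p^{s})=-\frac{B_{s+1,\chi_n}}{s+1}$.

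Finally I would separate the two parities. For even $k$ the index $s+1$ is odd, so $B_{s+1}=0$ by Lemma 2.2(i) (killing the $-B_{s+1,\chi_n}/(s+1)$ term); the odd indices $s+1$ and $s-1$ (from $r=0$ and $r=2$) yield $-A_m(n,s)$ and the $A_m(n,s-2)$-term, while the even index $s$ (from $r=1$) yields the $A(n,s-1)$-term. For odd $k$ the parities reverse, giving $A(n,s)-\frac{B_{s+1,\chi_n}}{s+1}$, the $A_m(n,s-1)$-term, and the $A(n,s-2)$-term. The last bookkeeping step uses $s=\phi(n^3)-k\equiv-k\pmod{n^2}$: in the $n^{2}$-term it lets me replace $s(s-1)$ by $k(k+1)$, and in the $n$-term it lets me replace $s$ by $-k$ (the error being a multiple of $n^{3}$), producing the two stated congruences. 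The main obstacle is the middle step: controlling the $d$-dependence of $\theta_d$ and the reflection signs so that the divisor sum factors \emph{exactly} into the Euler products defining $A_m$ and $A$, while simultaneously verifying that every $r\ge 3$ contribution vanishes modulo $n^3$.
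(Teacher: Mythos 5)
Your proposal is correct in substance and reaches the stated congruences, but by a genuinely different route from the paper. The paper localizes: it writes $n=dp^l$ with $p\nmid d$, applies Lemma 2.4 modulo $p^{3l}$ with the \emph{local} exponent $s'=\phi(p^{3l})-k$, sieves out the remaining prime factors $q_1,\dots,q_g$ of $d$ by inclusion--exclusion, and then must invoke the Kummer-type Lemma 2.5 (congruences (3.2)--(3.4)) to trade $s'$ for the global index $s=\phi(n^3)-k$ before assembling the prime powers. You instead work modulo $n^3$ from the outset with the single exponent $s$, sieve with $\chi_n(x)=\sum_{d\mid(x,n)}\mu(d)$, apply (1.8) once per divisor, and collapse $\sum_{d\mid n}\mu(d)J_m(d)d^{\ell}=\prod_{p\mid n}(1-J_m(p)p^{\ell})$ \emph{exactly}; the only index bookkeeping left is the exact factorization $s+k=\phi(n^3)=n^2\phi(n)$, e.g. $s(s-1)-k(k+1)=\phi(n^3)(s-k-1)\equiv 0\pmod{n^2}$, which replaces Lemma 2.5 entirely. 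This buys something real: it avoids the paper's delicate use of Lemma 2.5 (stated only for even indices yet applied in (3.2) at the odd index $s'+1$), and your integrality checks reduce cleanly to Lemma 2.1, with $p-1\nmid s\pm1$ automatic by parity, $p-1\nmid s-2\Leftrightarrow p-1\nmid k+2$ supplied by $I'(k,2)$ (even $k$) or by oddness of $k+2$ (odd $k$), and $(n,k+1)=1$ ensuring $v_p(s-1)=0$ in the odd case. Two caveats you should write out. First, the discarded $r\ge3$ term is $\mu(d)d^{s-r}n^r\binom{s}{r-1}\frac{1}{r}B_{s+1-r}(\theta_d)m^{-r}$, not literally ``$n^r$ times a $p$-integral quantity'': you need $s\ge4$ (from $k<\phi(n^3)-3$) so that $d^{s-3}\in\mathbb{Z}$, and at $r=s+1$ the stray $d^{-1}$ is compensated by $B_0=1$; routine, but it is exactly the bookkeeping you flagged as the obstacle, so it must appear. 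Second, your parenthetical claim that $n\equiv\pm1\pmod m$ forces every squarefree $d\mid n$ to satisfy $d\equiv\pm1\pmod m$ is false in general (take $n=91\equiv1\pmod 5$, yet $7,13\not\equiv\pm1\pmod 5$, so $\theta_d\notin\{1/m,(m-1)/m\}$); however, the paper's own proof makes precisely the same tacit assumption when it applies Lemma 2.4 with reflection to $n/q_i$ and writes $J_m(n/q_i)$ and $J_m(q)$, and the theorem's statement already presupposes $J_m(p)$ is defined for every $p\mid n$ --- the assumption is automatic for $m\in\{2,3,4,6\}$ and for prime $n$ --- so this is a gap you share with the paper rather than one you introduced.
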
 
 
    \begin{proof}
    Let $n=dp^l$ with prime $p\ge 5$, $p\nmid d$, $l \ge 1$. Since $n \equiv \pm 1 \left(\bmod m\right)$, the least positive residue of $n$ modulo $m$ is $1$ or $m-1$. By Lemma 2.2(ii) and Lemma 2.4, we get
    \begin{multline}
    \sum_{\substack{x=1 \\p\nmid x}}^{[n/m]} \frac{1}{x^k}\equiv
    \begin{aligned}
    	\begin{cases} -J_m(n)\frac{B_{s'+1}\left(\frac{1}{m}\right)}{s'+1}+\frac{n}{m}B_{s'}\left(\frac{1}{m}\right)\\
    		\quad +J_m(n)\frac{k}{2}\frac{n^2}{m^2}B_{s'-1}\left(\frac{1}{m}\right)\quad (\bmod p^{3l}) & \text {if $2\mid k$, $p^l \in {I(k,2)}$,} \\
    		\frac{B_{s'+1}\left(\frac{1}{m}\right)-B_{s'+1}}{s'+1}-J_m(n)B_{s'}\left(\frac{1}{m}\right)\frac{n}{m}\\
    		\quad -\frac{k}{2}\frac{n^2}{m^2}B_{s'-1}\left(\frac{1}{m}\right)\quad (\bmod p^{3l})& \text {if $2\nmid k$.} 
    	\end{cases}
    \end{aligned}\notag 
    \end{multline}
    Assume $q_1$, $q_2$, \dots, $q_g$ are different prime factors of $d$. Let $s=\phi(n^3)-k$. When $2\mid k$, $n\in {I'(k,2)}$, then
    \begin{align}\tag{3.1}
    	&T_{m,k}(n)=\sum_{\substack{x=1\\p\nmid x}}^{[n/m]}\frac{1}{x^k}-\sum_{i}\sum_{\substack{x=1 \\p\nmid x\\q_i\mid x}}^{[n/m]}\frac{1}{x^k}+\sum_{i,j}\sum_{\substack{x=1 \\p\nmid x\\q_iq_j\mid x}}^{[n/m]} \frac{1}{x^k}+\dots+(-1)^g\sum_{\substack{x=1 \\p\nmid x\\q_1\dots q_g\mid x}}^{[n/m]}\frac{1}{x^k}\notag\\
    	&{}\equiv-J_m(n)\frac{B_{s'+1}\left(\frac{1}{m}\right)}{s'+1}+\frac{n}{m}B_{s'}\left(\frac{1}{m}\right)+J_m(n)\frac{k}{2}\frac{n^2}{m^2}B_{s'-1}\left(\frac{1}{m}\right)\notag\\
    	&{}\quad -\sum_{i}\frac{1}{q_i^k}\left\{-J_m(\frac{n}{q_i})\frac{B_{s'+1}\left(\frac{1}{m}\right)}{s'+1}+\frac{n}{q_im}B_{s'}\left(\frac{1}{m}\right)+J_m(\frac{n}{q_i})\frac{k}{2}\frac{n^2}{q_i^2m^2} B_{s'-1}\left(\frac{1}{m}\right)\right\}\notag\\
    	&{}\quad +\dots +(-1)^g\frac{1}{(q_1\dots q_g)^k}\left\{-J_m(\frac{n}{q_1\dots q_g})\frac{B_{s'+1}\left(\frac{1}{m}\right)}{s'+1}\right.\notag\\
    	&\left.+\frac{n}{q_1\dots q_gm}B_{s'}\left(\frac{1}{m}\right)+J_m(\frac{n}{q_1\dots q_g})\frac{k}{2}\frac{n^2}{(q_1\dots q_g)^2m^2}B_{s'-1}\left(\frac{1}{m}\right)\right\}\notag\\
    	\equiv{}&-J_m(n)\prod_{q \mid \frac{n}{p^l}}\left(1-\frac{1}{J_m(q)q^{k}}\right)\frac{B_{s'+1}\left(\frac{1}{m}\right)}{s'+1}+\frac{n}{m}\prod_{q \mid \frac{n}{p^l}}\left(1-\frac{1}{q^{k+1}}\right)B_{s'}\left(\frac{1}{m}\right)\notag\\
    	&\quad +J_m(n)\frac{k}{2}\frac{n^2}{m^2}\prod_{q \mid \frac{n}{p^l}}\left(1-\frac{1}{J_m(q)q^{k+2}}\right)B_{s'-1}\left(\frac{1}{m}\right)\notag\\
    	\equiv{}& -J_m(n)\prod_{p \mid n}\left(1-J_m(p)p^s\right)\frac{B_{s'+1}\left(\frac{1}{m}\right)}{s'+1}+\frac{n}{m}\prod_{p\mid n}\left(1-p^{s-1}\right)B_{s'}\left(\frac{1}{m}\right)\notag\\
    	&\quad +J_m(n)\frac{k}{2}\frac{n^2}{m^2}\prod_{p \mid n} \left(1-J_m(p)p^{s-2}\right)B_{s'-1}\left(\frac{1}{m}\right)\quad(\bmod n^3).\notag
    \end{align}
    From Lemma 2.5 and Lemma 2.1, we observe 
    \begin{equation}\tag{3.2}
    	\frac{B_{s'+1}\left(\frac{1}{m}\right)}{s'+1}\equiv \frac{B_{s+1}\left(\frac{1}{m}\right)}{s+1}\quad (\bmod p^{3l}),
    \end{equation}
    \begin{equation}\tag{3.3}
    	B_{s'}\left(\frac{1}{m}\right)\equiv\frac{s'}{s} B_{s}\left(\frac{1}{m}\right)\equiv B_{s}\left(\frac{1}{m}\right)\quad (\bmod p^{2l})	
    \end{equation}
    and
    \begin{align}\tag{3.4}
    	B_{s'-1}\left(\frac{1}{m}\right)
    	\equiv{}&\frac{s'-1}{s-1} B_{s-1}\left(\frac{1}{m}\right)\notag \\
    	\equiv{}& \frac{-k-1}{s-1} B_{s-1}\left(\frac{1}{m}\right)\quad (\bmod p^{l}).\notag 	
    \end{align}
    Applying (3.2), (3.3), (3.4) to (3.1), we can get $$
    \begin{aligned}
    	T_{m,k}(n)\equiv& -J_m(n)\prod_{p \mid n}\left(1-J_m(p)p^s\right)\frac{B_{s+1}\left(\frac{1}{m}\right)}{s+1}+\frac{n}{m}\prod_{p\mid n}\left(1-p^{s-1}\right)B_{s}\left(\frac{1}{m}\right)\\
    	&\quad -J_m(n)\frac{k(k+1)}{2}\frac{n^2}{m^2}\prod_{p \mid n} \left(1-J_m(p)p^{s-2}\right)\frac{B_{s-1}\left(\frac{1}{m}\right)}{s-1}\\
    	\equiv& -A_m(n,s)+\frac{n}{m}sA(n,s-1)
    	-\frac{k(k+1)n^2}{2m^2}A_m(n,s-2)\quad (\bmod n^3). 
    \end{aligned}$$\par
    When $2\nmid k$, $(n,k+1)=1$, by Lemma 2.5 and Lemma 2.1 again, we see
    $$
    \frac{B_{s'+1}\left(\frac{1}{m}\right)}{s'+1}\equiv \frac{B_{s+1}\left(\frac{1}{m}\right)}{s+1}\quad (\bmod p^{3l}),
    $$
    $$
    B_{s'}\left(\frac{1}{m}\right)\equiv\frac{s'}{s} B_{s}\left(\frac{1}{m}\right)\equiv \frac{-k}{s} B_{s}\left(\frac{1}{m}\right)\quad (\bmod p^{2l})$$
    and $$
    B_{s'-1}\left(\frac{1}{m}\right)\equiv\frac{s'-1}{s-1} B_{s-1}\left(\frac{1}{m}\right)\equiv  B_{s-1}\left(\frac{1}{m}\right)\quad (\bmod p^{l}).$$
    Using the similar method of the first case, we can obtain the following congruence and complete the proof of Theorem 3.1.
     \begin{align*}
    	&T_{m,k}(n)\equiv \prod_{p\mid n}\left(1-p^s\right)\frac{B_{s+1}\left(\frac{1}{m}\right)-B_{s+1}}{s+1}-\frac{k}{2}\frac{n^2}{m^2}\prod_{p \mid n} \left(1-p^{s-2}\right)B_{s-1}\left(\frac{1}{m}\right)\\
    	&\quad
    	+J_m(n)\frac{kn}{m}\prod_{p\mid n}\left(1-J_m(p)p^{s-1}\right)\frac{B_s\left(\frac{1}{m}\right)}{s} \\
        &{}\equiv A(n,s)-\frac{B_{s+1,\chi_n}}{s+1}+\frac{kn}{m}A_m(n,s-1)
    		+\frac{k(k+1)n^2}{2m^2}A(n,s-2)\quad (\bmod n^3).
    \end{align*}
    \end{proof}
     Putting $m=2, 3, 4, 6$(see \cite[Corollary3.1, Corollary 3.2, Corollary 3.3, Corollary 3.4]{KKU}) win Theorem 3.1, we get the following corollaries.
     
      \begin{cor}
      Let $n>1$ be an odd positive integer, we have
      	\begin{enumerate}[label=\upshape(\roman*), leftmargin=*, widest=ii]
      	\item   $T_{2,1}(n)\equiv-2q_n(2)+n{q_n(2)}^2-\dfrac{2n^2}{3}{q_n(2)}^3-\dfrac{7n^2}{8}B_{\phi(n^3)-2,\chi_n}\quad (\bmod n^3)$, \label{it:5}
      	\item $T_{2,2}(n)\equiv\dfrac{7n}{2}B_{\phi(n^3)-2,\chi_n}+\dfrac{31}{8}n^3B_{\phi(n^3)-4,\chi_n} \quad (\bmod n^3)$. \label{it:6}
      \end{enumerate} 
      \end{cor}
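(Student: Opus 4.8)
The plan is to specialize Theorem 3.1 to $m=2$ and then reduce the resulting expressions. Since $n$ is odd, $n\equiv 1\pmod 2$, so $J_2(n)=1$ and $J_2(p)=1$ for every prime $p\mid n$; hence all Jacobi symbols collapse and $A_2(n,j)=\prod_{p\mid n}(1-p^{j})\,B_{j+1}(\tfrac12)/(j+1)$ coincides with $A(n,j)$. For part (i) I invoke the odd case of Theorem 3.1 with $k=1$ (legitimate because $(n,k+1)=(n,2)=1$), and for part (ii) the even case with $k=2$ (legitimate because $n\in I'(2,2)$ under the standing hypotheses). Writing $s=\phi(n^3)-k$ and inserting the definitions of $A$ and $A_2$, I obtain $T_{2,k}(n)$ modulo $n^3$ as an explicit combination of the three values $B_{s+1}(\tfrac12)$, $B_s(\tfrac12)$ and $B_{s-1}(\tfrac12)$.

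Next I evaluate these at $1/2$ through the classical identity $B_j(\tfrac12)=(2^{1-j}-1)B_j$. The decisive simplification is that one of the relevant subscripts is always odd and $\ge 3$, so by Lemma 2.2\ref{it:1} the corresponding Bernoulli number, and hence that entire term, vanishes: for $k=1$ the contribution $A_2(n,s-1)$ drops out, and for $k=2$ both $A_2(n,s)$ and $A_2(n,s-2)$ drop out. Folding each product into a generalized Bernoulli number via the identity $B_{j,\chi_n}=B_j\prod_{p\mid n}(1-p^{j-1})$ from the introduction, part (i) reduces to $T_{2,1}(n)\equiv(2^{-s}-2)\,\frac{B_{\phi(n^3),\chi_n}}{\phi(n^3)}+\frac{n^2}{4}(2^{2-s}-1)\,\frac{B_{\phi(n^3)-2,\chi_n}}{\phi(n^3)-2}\pmod{n^3}$, while part (ii) reduces to the single surviving term $\frac n2 s\,A(n,s-1)=\frac n2(2^{1-s}-1)B_{\phi(n^3)-2,\chi_n}$.

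The heart of the proof is the $p$-adic analysis of these surviving terms, since the generalized Bernoulli numbers need not be $p$-integral. By the von Staudt--Clausen theorem (the Remark after Lemma 2.1) $B_{\phi(n^3),\chi_n}$ has a pole at each $p\mid n$, because $p-1\mid\phi(n)\mid\phi(n^3)$; after dividing by $\phi(n^3)=n^2\phi(n)$ the factor $B_{\phi(n^3),\chi_n}/\phi(n^3)$ can have $p$-adic valuation $-3\,v_p(n)$. Consequently the companion factor $2^{-s}-2=2\bigl(2^{-\phi(n^3)}-1\bigr)$ must be controlled far beyond the bound $2^{\phi(n^3)}\equiv1\pmod{n^3}$ coming from Euler's theorem. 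I would therefore write $2^{\phi(n^3)}=(2^{\phi(n)})^{n^2}=(1+nq_n(2))^{n^2}$ and expand by the binomial theorem; pairing the successive powers $q_n(2),q_n(2)^2,q_n(2)^3$ against the pole produces exactly $-2q_n(2)+nq_n(2)^2-\frac{2n^2}{3}q_n(2)^3$. The generalized Bernoulli contributions are then pinned down using Lemma 2.5 (Kummer's congruence) and Lemma 2.6, which allow me to replace $B_{\phi(n^3)-j,\chi_n}/(\phi(n^3)-j)$ by $B_{\phi(n^3)-2,\chi_n}$ or $B_{\phi(n^3)-4,\chi_n}$ to the required precision, and to reduce the rational factors, e.g.\ $2^{2-s}-1\equiv7\pmod{n^3}$ and $1/(\phi(n^3)-2)\equiv-\tfrac12\pmod n$ because $n\mid\phi(n^3)$; this yields the coefficient $-\frac{7n^2}{8}$ in (i) and, analogously, the coefficients $\frac{7n}{2}$ and $\frac{31}{8}n^3$ in (ii).

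The main obstacle is precisely this bookkeeping. Because the generalized Bernoulli numbers carry genuine poles at the primes dividing $n$, one cannot reduce the powers of two modulo $n^3$ naively; the expansion of $(1+nq_n(2))^{n^2}$ must be carried to high enough order that, after multiplication by the pole, every surviving contribution is determined modulo $n^3$, and the refined Kummer congruences of Lemma 2.5 must be applied with their error terms tracked consistently across all terms. Verifying that the spurious higher-order pieces cancel, leaving exactly the stated polynomial in $q_n(2)$ together with the single (respectively two) generalized Bernoulli terms, is the delicate part of the computation.
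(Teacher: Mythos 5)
Your part (i) follows essentially the paper's own route (specialize Theorem 3.1 at $m=2$, kill the odd-index terms via $B_{2j+1}(\tfrac12)=0$, fold the Euler products into $B_{s,\chi_n}$, then expand $2^{\phi(n^3)}=(1+nq_2(n))^{n^2}$ against the pole), and your intermediate congruence for $T_{2,1}(n)$ agrees with the paper's displayed one. But part (ii) contains a genuine gap. You justify invoking the even case of Theorem 3.1 with $k=2$ by asserting $n\in I'(2,2)$ ``under the standing hypotheses.'' That is false whenever $5\mid n$: membership in $I'(2,2)$ requires $p-1\nmid k+2=4$ for every $p\mid n$, and $5-1=4$. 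This is not a technicality --- it is exactly the case that produces the second term of (ii). When $5\mid n$, one has $4\mid\phi(n^3)$, hence $5-1\mid\phi(n^3)-4$, so $B_{\phi(n^3)-4,\chi_n}$ has a $5$-adic pole by von Staudt--Clausen, and the $i=3$ term $\tfrac13\binom{s'}{2}B_{s'-2}\bigl(\{\tfrac{-dp^l}{m}\}\bigr)\tfrac{d^3p^{3l}}{m^3}$, which Lemma 2.4 discards under the hypothesis $p^l\in I(k,2)$, is no longer $\equiv 0\pmod{p^{3l}}$. The paper accordingly argues ``from the proofs of Lemma 2.4 and Theorem 3.1'' rather than from their statements, retaining that term, which evaluates to $\tfrac{(2^{5-\phi(n^3)}-1)n^3}{8}B_{\phi(n^3)-4,\chi_n}\equiv\tfrac{31}{8}n^3B_{\phi(n^3)-4,\chi_n}\pmod{n^3}$. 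In your derivation no $B_{\phi(n^3)-4}$ can ever appear: after the odd-index Bernoulli polynomials vanish, the statement of Theorem 3.1 with $k=2$ leaves only the single term $\tfrac{n}{2}sA(n,s-1)$. So claiming the coefficient $\tfrac{31}{8}n^3$ arises ``analogously'' is unsupported; as written, your argument proves (ii) only for $5\nmid n$, where the extra term is $\equiv 0\pmod{n^3}$ anyway (compare the explicit restriction $5\nmid n$ in Corollaries 3.2(ii)--3.4(ii)).

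A secondary point on (i): the ingredient that pins down the pole term is the refined congruence (3.5), $\tfrac{nB_{\phi(n^3)},{\chi_n}}{\phi(n)}\equiv 1\pmod{n^3}$, which the paper derives from $pB_{\phi(p^{3l})}\equiv p-1\pmod{p^{3l}}$, a strengthening of von Staudt--Clausen. Plain von Staudt--Clausen gives only the pole structure (a statement mod $p$, i.e.\ the valuation $-3v_p(n)$ you cite), while Lemma 2.5 and Lemma 2.6 do not apply at the index $\phi(n^3)$, since $p-1\mid\phi(n^3)$ violates their hypotheses. Your binomial expansion of $(1+nq_2(n))^{n^2}$ is exactly how the coefficients $-2q_2(n)+nq_2(n)^2-\tfrac{2n^2}{3}q_2(n)^3$ emerge, but the unit normalization of $B_{\phi(n^3),\chi_n}/\phi(n^3)$ modulo $n^3$ is left resting on tools that do not cover it; you need (3.5) or an equivalent Carlitz-type refinement to close that step.
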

  
     \begin{proof}
     	Note that $B_{2n+1}(\frac{1}{2})=0(n >1)
     	$ and $B_{2n}(\frac{1}{2})=(2^{1-2n}-1)B_{2n}$. From the proofs of Lemma 2.4 and Theorem 3.1, we deduce that$$
     	T_{2,1}(n)\equiv (2^{1-\phi(n^3)}-2)\frac{B_{\phi(n^3),\chi_n}}{\phi(n^3)}-\frac{n^2(2^{3-\phi(n^3)}-1)}{8}B_{\phi(n^3)-2,\chi_n}(\bmod n^3)$$ for $m=2$ and $(n,2)=1$.
     	Now by $2^{\phi(n)}=nq_2(n)+1$, the congruence  
     	\begin{equation}\tag{3.5}
     		\frac{nB_{\phi(n^3)},\chi_n}{\phi(n)}\equiv 1 \quad(\bmod n^3)
     	\end{equation}
     	(which follows from $ pB_{\phi(p^{3l})}\equiv p-1 (\bmod p^{3l})$), and the von Staudt-Clausen theorem, Corollary 3.1(i) is proved.\par
     	Similarly, we have 
     	$$
     	T_{2,2}(n)\equiv\frac{n(2^{3-\phi(n^3)}-1)}{2}B_{\phi(n^3)-2,\chi_n}+\frac{(2^{5-\phi(n^3)}-1)n^3}{8}B_{\phi(n^3)-4,\chi_n}(\bmod n^3)
     	$$for $m=2$ and $(n,2)=1$.
     	Then (ii) follows immediately from the von Staudt-Clausen theorem.
     \end{proof}
       
       \begin{cor}
       	Let $n >1$ be a positive integer with $(n,6)=1$, 
       	we have
       		\begin{enumerate}[label=\upshape(\roman*), leftmargin=*, widest=ii]
       		\item 
       		\begin{multline}
       			T_{3,1}(n)\equiv{} \dfrac{3}{2}\left(-q_3(n)+\dfrac{n}{2}{q_3(n)}^2-\dfrac{n^2}{3}{q_3(n)}^3\right)\\
       			+\dfrac{n}{3}A_3(n,\phi(n^3)-2)-\dfrac{13n^2}{18}B_{\phi(n^3)-2,\chi_n}\quad (\bmod n^3),\notag
       		\end{multline}
       		
       		\item 
       		\begin{multline}
       		T_{3,2}(n)\equiv{}-A_3(n,\phi(n^3)-2)+\dfrac{13n}{3}B_{\phi(n^3)-2,\chi_n}\\
       		-\dfrac{n^2}{3}A_3(n,\phi(n^3)-4) \quad (\bmod n^3)\quad  \text{for $5\nmid n$.}\notag	
       		\end{multline}
       	\end{enumerate}
       \end{cor}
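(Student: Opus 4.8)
The plan is to specialize Theorem~3.1 to $m=3$, taking $k=1$ for part (i) (the odd case) and $k=2$ for part (ii) (the even case), and then to simplify the resulting expressions using the evaluations of $B_{2n}(1/3)$ in Lemma~2.3 together with the identity $B_{s,\chi_n}=B_s\prod_{p\mid n}(1-p^{s-1})$ from the introduction. Writing $s=\phi(n^3)-k$, one observes that every term of the form $A_3(n,j)$ carries a Bernoulli polynomial of \emph{odd} index $B_{j+1}(1/3)$, which Lemma~2.3 does not evaluate; these I would leave untouched, so that $A_3(n,\phi(n^3)-2)$ (and, in part (ii), also $A_3(n,\phi(n^3)-4)$) appear verbatim in the final congruence. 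By contrast, every term built from $A(n,j)$ carries an \emph{even}-index polynomial $B_{j+1}(1/3)$, and for these I would substitute $B_{j+1}(1/3)=\frac{3-3^{j+1}}{2\cdot 3^{j+1}}B_{j+1}$ and collapse the Euler product against $B_{j+1}$ into a generalized Bernoulli number.

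For part (ii) I would take $k=2$, so that the hypothesis $n\in I'(2,2)$ of Theorem~3.1 requires $p-1\nmid 4$ for every $p\mid n$; since $(n,6)=1$ forces $p\ge 5$, this fails precisely when $p=5$, which explains the extra assumption $5\nmid n$. The leading term $-A_3(n,s)$ and the final term $-\tfrac{n^2}{3}A_3(n,\phi(n^3)-4)$ (note $\frac{k(k+1)}{2m^2}=\frac13$) are kept as they stand. For the middle term $\frac{n}{3}s\,A(n,s-1)$ I would insert Lemma~2.3 at index $s=\phi(n^3)-2$, use $B_s\prod_{p\mid n}(1-p^{s-1})=B_{s,\chi_n}$, and reduce the scalar $\frac12(3^{1-s}-1)\equiv\frac12(27-1)=13\pmod{n^2}$ via $3^{\phi(n^3)}\equiv1\pmod{n^3}$; since $B_{s,\chi_n}$ is an $n$-adic integer (because $p-1\nmid 2$ for $p\ge5$, so von Staudt--Clausen gives no denominator), this yields $\frac{13n}{3}B_{\phi(n^3)-2,\chi_n}$ and hence (ii).

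For part (i) I would take $k=1$, where the odd-case formula of Theorem~3.1 contributes four terms with $s=\phi(n^3)-1$. The term $\frac{n}{3}A_3(n,\phi(n^3)-2)$ is odd-index and is kept. The last term $\frac{n^2}{9}A(n,\phi(n^3)-3)$ (here $\frac{k(k+1)}{2m^2}=\frac19$) is even-index: reducing $B_{\phi(n^3)-2}(1/3)$ by Lemma~2.3 modulo $n$ and using $\frac{1}{\phi(n^3)-2}\equiv-\frac12$ and $3^{3-\phi(n^3)}\equiv27\pmod n$ produces $-\frac{13n^2}{18}B_{\phi(n^3)-2,\chi_n}$. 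The heart of the argument is the surviving pair $A(n,s)-\frac{B_{s+1,\chi_n}}{s+1}$ with $s+1=\phi(n^3)$: applying Lemma~2.3 to $B_{\phi(n^3)}(1/3)$ and collapsing the product shows both terms are scalar multiples of $\frac{B_{\phi(n^3),\chi_n}}{\phi(n^3)}$, and their difference equals $\frac32\bigl(3^{-\phi(n^3)}-1\bigr)\frac{B_{\phi(n^3),\chi_n}}{\phi(n^3)}$.

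The main obstacle is precisely this last expression, because (3.5) only tells us that $\frac{B_{\phi(n^3),\chi_n}}{\phi(n^3)}$ behaves like $\frac{1}{n^3}$ up to an $O(1)$ correction. Multiplying by the factor $3^{-\phi(n^3)}-1$, which is $O(n^3)$, I would first note that the $O(1)$ correction dies modulo $n^3$ (since $3^{-\phi(n^3)}-1\equiv0\pmod{n^3}$), so everything reduces to computing $\frac{3}{2n^3}\bigl(3^{-\phi(n^3)}-1\bigr)\pmod{n^3}$. This demands $3^{-\phi(n^3)}-1$, equivalently $3^{\phi(n^3)}-1=(1+nq_3(n))^{n^2}-1$, to precision $n^6$. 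Expanding the binomial and discarding terms of order $\ge n^6$ leaves $n^3q_3(n)-\frac{n^4}{2}q_3(n)^2+\frac{n^5}{3}q_3(n)^3$, whence (after inverting, which changes nothing modulo $n^6$ since the square is already $O(n^6)$) the pair equals $\frac32\bigl(-q_3(n)+\frac n2 q_3(n)^2-\frac{n^2}{3}q_3(n)^3\bigr)$. Adding the three pieces gives (i). Throughout, the bookkeeping of $p$-adic denominators --- checking that the kept $A_3$ terms and the various $\frac{1}{s\pm1}$ and $\frac13$ factors are meaningful modulo the stated power of $n$ --- is the only remaining delicate point, and it is controlled by Lemma~2.1 together with $(n,6)=1$.
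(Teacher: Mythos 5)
Your proposal is correct and takes essentially the same route as the paper: specialize Theorem~3.1 at $m=3$ with $k=1$ and $k=2$, keep the odd-index $A_3(n,\cdot)$ terms verbatim, evaluate the even-index terms via Lemma~2.3 together with $B_{s,\chi_n}=B_s\prod_{p\mid n}(1-p^{s-1})$, and handle the pair $A(n,s)-\frac{B_{s+1,\chi_n}}{s+1}$ through (3.5) and the expansion of $3^{\phi(n^3)}=(1+nq_3(n))^{n^2}$ to precision $n^6$. Your write-up merely fills in details the paper leaves implicit (the binomial expansion, the scalar reductions $\frac12(3^{3-\phi(n^3)}-1)\equiv 13$, and the observation that $n\in I'(2,2)$ is exactly the source of the hypothesis $5\nmid n$ in (ii)).
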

   \begin{proof}
    	By Lemma 2.3, $3^{\phi(n)}=1+nq_3(n)$ and (3.5), we can get $$
    \begin{aligned}
    	\left(\frac{3-3^{\phi(n^3)}}{2\cdot 3^{\phi(n^3)}}-1\right)\frac{B_{\phi(n^3),\chi_n}}{\phi(n^3)}\equiv{}&\frac{3}{2}\cdot \frac{1-3^{\phi(n^3)}}{n^3}\\
    	\equiv{}&\frac{3}{2}\left(-q_3(n)+\frac{n}{2}{q_3(n)}^2-\frac{n^2}{3}{q_3(n)}^3\right)(\bmod n^3).
    \end{aligned} $$
    Taking $m=3$ and $k=1$ in Theorem 3.1, then  using the above congruence, Lemma 2.3 and the von Staudt-Clausen theorem, (i) is obtained.\par
    Taking $m=3$ and $k=2$ in Theorem 3.1, by Lemma 2.3 and the the von Staudt-Clausen theorem again, we complete the proof of (ii).
   \end{proof}
       
      \begin{cor}
      Let $n>1$ be a positive integer with $(n,6)=1$, then
      	\begin{enumerate}[label=\upshape(\roman*), leftmargin=*, widest=ii]
      	\item 
      	\begin{multline}
      	 T_{4,1}(n)\equiv{}3\left(-q_2(n)+\dfrac{n}{2}{q_2(n)}^2-\dfrac{n^2}{3}{q_2(n)}^3\right)\\
      	 -nE(n,\phi(n^3)-2)-\dfrac{7n^2}{8}B_{\phi(n^3)-2,\chi_n}\quad (\bmod n^3),\notag
      	\end{multline}
      	\item
      	\begin{multline} T_{4,2}(n)\equiv{}4E(n,\phi(n^3)-2)+7nB_{\phi(n^3)-2,\chi_n}\\
      		+12n^2E(n,\phi(n^3)-4)\quad (\bmod n^3)\quad \text{for $5\nmid n$,}\notag 
      	\end{multline}
      	
      \end{enumerate} 
      where $E(n,k)=J_4(n)\prod_{p \mid n}\left(1-J_4(p)p^k\right)E_k$, $E_k$ is the $k$th Euler number. 
      \end{cor}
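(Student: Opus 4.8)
I would prove both congruences by specializing Theorem 3.1 to $m=4$, taking $k=1$ for part (i) and $k=2$ for part (ii), exactly in the spirit of the proofs of Corollaries 3.1 and 3.2. Write $\phi=\phi(n^3)$ and $s=\phi-k$. Since $(n,6)=1$ we have $n\equiv\pm1\pmod 4$, so $J_4(n)=(-1)^{(n-1)/2}$ is well defined; for $k=1$ the side condition $(n,k+1)=(n,2)=1$ of the odd branch is automatic, while for $k=2$ the even branch requires $n\in I'(2,2)$, i.e. $p-1\nmid 4$ for every $p\mid n$, which forces $p\neq 5$ and explains the extra hypothesis $5\nmid n$ in (ii). In each branch the three terms of Theorem 3.1 involve $B_{s+1}(1/4)$, $B_s(1/4)$, $B_{s-1}(1/4)$, whose indices have alternating parity.

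The heart of the computation is to evaluate these values by parity. For an even index $2t$, Lemma 2.3 gives $B_{2t}(1/4)=\frac{2-2^{2t}}{4^{2t}}B_{2t}$, and together with $B_{r,\chi_n}=B_r\prod_{p\mid n}(1-p^{r-1})$ this turns every factor $A(n,\cdot)$ (the one without a Jacobi symbol) into an expression in the generalized Bernoulli numbers $B_{\phi-2,\chi_n}$ and $B_{\phi-4,\chi_n}$. For an odd index $j$ I will first establish the companion identity $B_j(1/4)=-\frac{j}{4^j}E_{j-1}$: putting $x=1/2$ in Lemma 2.2(iv) and using $B_j(3/4)=-B_j(1/4)$ from Lemma 2.2(ii) gives $E_{j-1}(1/2)=-\frac{2^{j+1}}{j}B_j(1/4)$, and the standard normalization $E_{j-1}=2^{j-1}E_{j-1}(1/2)$ then yields the claim. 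Substituting this into each $A_4(n,\cdot)$ (the factor carrying $J_4$) and recognizing the product $J_4(n)\prod_{p\mid n}(1-J_4(p)p^{\cdot})$ converts those terms exactly into $E(n,\phi-2)$ and $E(n,\phi-4)$. Thus $A_4$-terms produce Euler-number contributions and $A$-terms produce generalized-Bernoulli contributions.

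What remains is the reduction of the powers of $2$ modulo $n^3$. Using $2^{\phi(n)}=1+nq_2(n)$ I have $2^{\phi}=(1+nq_2(n))^{n^2}$ and $4^{\phi}=(1+nq_2(n))^{2n^2}$, so in particular $2^{\phi}\equiv 1$, $4^{\phi}\equiv1$, $2^{\phi-2}\equiv\frac14$, and $4^{\phi-i}\equiv 4^{-i}\pmod{n^3}$; combined with $\phi-2\equiv -2\pmod n$ (hence $\frac{1}{\phi-2}\equiv-\frac12\pmod n$) these give the integer coefficients directly ($4$, $7n$, $12n^2$ in (ii), and $-nE(n,\phi-2)$ and $-\frac78 n^2 B_{\phi-2,\chi_n}$ in (i)). The one delicate point, exactly as in Corollaries 3.1 and 3.2, is the Euler-quotient part of (i): the combination $A(n,s)-\frac{B_{s+1,\chi_n}}{s+1}$ equals $\frac{2-2^{\phi}-4^{\phi}}{4^{\phi}}\cdot\frac{B_{\phi,\chi_n}}{\phi(n^3)}$, a product of a factor divisible by $n^3$ and, by (3.5) in the form $\frac{nB_{\phi(n^3),\chi_n}}{\phi(n)}\equiv1\pmod{n^3}$ together with $\phi(n^3)=n^2\phi(n)$, a factor equal to $n^{-3}$ times a unit. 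To extract its value modulo $n^3$ I must therefore expand $2-2^{\phi}-4^{\phi}$ to order $n^6$; the binomial expansions above give $2-2^{\phi}-4^{\phi}\equiv -3n^3q_2(n)+\frac32 n^4 q_2(n)^2-n^5 q_2(n)^3\pmod{n^6}$, and dividing by $n^3$ yields precisely $3\bigl(-q_2(n)+\tfrac n2 q_2(n)^2-\tfrac{n^2}{3}q_2(n)^3\bigr)$.

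Assembling the three contributions in each branch and invoking the von Staudt--Clausen theorem to clear denominators gives the two stated congruences. The main obstacle I expect is the bookkeeping of the powers of $2$ in part (i): I must track $2^{\phi}$ and $4^{\phi}$ all the way to the sixth power of $n$ so that the Euler-quotient terms emerge with the correct rational coefficients, while keeping the $E(n,\cdot)$ and $B_{\cdot,\chi_n}$ terms only to the much lower precision actually needed; coordinating these two precisions, and correctly threading the sign $J_4(p)=(-1)^{(p-1)/2}$ through the products so they reassemble into $E(n,\cdot)$, is where the care is required.
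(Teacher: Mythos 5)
Your proposal is correct and follows essentially the same route as the paper's proof: specialize Theorem 3.1 at $m=4$ with $k=1$ and $k=2$, evaluate the values $B_j\left(\frac{1}{4}\right)$ by parity via Lemma 2.3 and the identity $E_{2t}=-4^{2t+1}B_{2t+1}\left(\frac14\right)/(2t+1)$ (which you rederive from Lemma 2.2(ii),(iv) and the normalization $E_m=2^mE_m\left(\frac12\right)$, while the paper simply cites it), and extract the Euler-quotient part of (i) from $\left(\frac{2-2^{\phi(n^3)}}{4^{\phi(n^3)}}-1\right)\frac{B_{\phi(n^3),\chi_n}}{\phi(n^3)}$ using $2^{\phi(n)}=1+nq_2(n)$ and (3.5). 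Your direct expansion of $2-2^{\phi}-4^{\phi}$ modulo $n^6$ is equivalent to the paper's factorization $(2^{\phi}+2)(1-2^{\phi})/4^{\phi}$, and your identification of the hypothesis $5\nmid n$ in (ii) with the condition $n\in I'(2,2)$ makes explicit what the paper leaves implicit.
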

      \begin{proof}
       By Lemma 2.3, $2^{\phi(n)}=1+nq_2(n)$ and (3.5), we have $$
      \begin{aligned}
      	\left(\frac{2-2^{\phi(n^3)}}{ 4^{\phi(n^3)}}-1\right)\frac{B_{\phi(n^3),\chi_n}}{\phi(n^3)} &\equiv\frac{(2^{\phi(n^3)}+2)(1-2^{\phi(n^3)})}{4^{\phi(n^3)}}\frac{B_{\phi(n^3),\chi_n}}{\phi(n^3)}\\
      	&\equiv3\left(-q_2(n)+\frac{n}{2}{q_2(n)}^2-\frac{n^2}{3}{q_2(n)}^3\right)\quad (\bmod n^3).
      \end{aligned}$$
      Taking $m=4$ and $k=1$ in Theorem 3.1, then  using the above congruence, $E_{2 n}=-4^{2 n+1} \dfrac{B_{2 n+1}\left(\frac{1}{4}\right)}{2 n+1}$, Lemma 2.3 and  the von Staudt-Clausen theorem, (i) follows easily.\par
      Taking $m=4$ and $k=2$ in Theorem 3.1, by using  $E_{2 n}=-4^{2 n+1} \dfrac{B_{2 n+1}\left(\frac{1}{4}\right)}{2 n+1}$, Lemma 2.3 and  the von Staudt-Clausen theorem, (ii) is proved.
      \end{proof}
      
      \begin{cor}
      Let $n>1$ be a positive integer with $(n,6)=1$, then 
      	\begin{enumerate}[label=\upshape(\roman*), leftmargin=*, widest=ii]
      	\item  
      	\begin{multline}
      	T_{6,1}(n)\equiv-2q_2(n)+n{q_2(n)}^2-\frac{2n^2}{3}{q_2(n)}^3
      		-\frac{3}{2}q_3(n)+\frac{3n}{4}{q_3(n)}^2-\frac{n^2}{2}{q_3(n)}^3\\
      		+\frac{n}{6}A_6(n,\phi(n^3)-2)-\frac{91n^2}{72}B_{\phi(n^3)-2,\chi_n}\quad (\bmod n^3),\notag
      	\end{multline}
      	\item 
      	\begin{multline}
      	T_{6,2}(n)\equiv -A_6(n,\phi(n^3)-2)+\dfrac{91n}{6}B_{\phi(n^3)-2,\chi_n}\\
      	-\dfrac{n^2}{12}A_6(n,\phi(n^3)-6)\quad (\bmod n^3) \quad \text{ for $5\nmid n$.} \notag
      	\end{multline}
      \end{enumerate} 
      \end{cor}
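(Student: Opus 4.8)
The plan is to specialize Theorem 3.1 to $m=6$, taking $k=1$ for part (i) and $k=2$ for part (ii), exactly parallel to the proofs of Corollaries 3.2 and 3.3. Since $(n,6)=1$ forces $n\equiv\pm1\pmod6$, Theorem 3.1 applies. For $k=1$ (odd) the hypothesis $(n,k+1)=(n,2)=1$ holds because $n$ is odd, so the second branch gives, with $s=\phi(n^3)-1$,
\[
T_{6,1}(n)\equiv A(n,s)-\frac{B_{s+1,\chi_n}}{s+1}+\frac n6 A_6(n,s-1)+\frac{n^2}{36}A(n,s-2)\pmod{n^3}.
\]
For $k=2$ (even) the first branch requires $n\in I'(2,2)$, i.e. $p\nmid 2$ and $p-1\nmid4$ for every $p\mid n$; since every such prime is $\ge5$, the condition $p-1\nmid4$ fails precisely when $p=5$, which is why part (ii) is stated for $5\nmid n$. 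Then with $s=\phi(n^3)-2$,
\[
T_{6,2}(n)\equiv -A_6(n,s)+\frac n6\,s\,A(n,s-1)-\frac{n^2}{12}A_6(n,s-2)\pmod{n^3}.
\]
Throughout I will use Lemma 2.3 to evaluate the $B_j(\tfrac16)$ and the identity $B_{j,\chi_n}=B_j\prod_{p\mid n}(1-p^{j-1})$ to pass between ordinary and generalized Bernoulli numbers.

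For part (i) the first two terms combine, via Lemma 2.3 and that identity, into
\[
A(n,s)-\frac{B_{s+1,\chi_n}}{s+1}=\left(\frac{(2-2^{\phi(n^3)})(3-3^{\phi(n^3)})}{2\cdot6^{\phi(n^3)}}-1\right)\frac{B_{\phi(n^3),\chi_n}}{\phi(n^3)}.
\]
Writing $a=2^{\phi(n^3)}$ and $b=3^{\phi(n^3)}$, the bracket has numerator $6-3a-2b-ab=-4(a-1)-3(b-1)-(a-1)(b-1)$. Multiplying by $\frac{B_{\phi(n^3),\chi_n}}{\phi(n^3)}$ and invoking (3.5) to write $\frac{B_{\phi(n^3),\chi_n}}{\phi(n^3)}=n^{-3}+t$ with $t$ a $p$-adic integer, both the $p$-adic integer part $t$ and the cross term $(a-1)(b-1)$ drop out modulo $n^3$. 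It remains to expand $\frac{a-1}{n^3}$ and $\frac{b-1}{n^3}$; exactly as in Corollary 3.3 these equal $q_2(n)-\frac n2 q_2(n)^2+\frac{n^2}3 q_2(n)^3$ and $q_3(n)-\frac n2 q_3(n)^2+\frac{n^2}3 q_3(n)^3$ modulo $n^3$, so that $A(n,s)-\frac{B_{s+1,\chi_n}}{s+1}\equiv -2q_2(n)+nq_2(n)^2-\frac{2n^2}3 q_2(n)^3-\frac32 q_3(n)+\frac{3n}4 q_3(n)^2-\frac{n^2}2 q_3(n)^3\pmod{n^3}$, which is the leading block of (i).

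The remaining terms are handled by reducing the relevant $A$'s modulo small powers of $n$. In part (i) the term $\frac n6 A_6(n,s-1)=\frac n6 A_6(n,\phi(n^3)-2)$ stays as is, while Lemma 2.3 and the identity give $A(n,s-2)=\frac{(2-2^{\phi(n^3)-2})(3-3^{\phi(n^3)-2})}{2\cdot6^{\phi(n^3)-2}}\cdot\frac{B_{\phi(n^3)-2,\chi_n}}{\phi(n^3)-2}$; reducing modulo $n$ with $2^{\phi(n^3)}\equiv3^{\phi(n^3)}\equiv1\pmod{n^3}$ sends the first factor to $91$ and $\phi(n^3)-2$ to $-2$, so $\frac{n^2}{36}A(n,s-2)\equiv-\frac{91n^2}{72}B_{\phi(n^3)-2,\chi_n}\pmod{n^3}$. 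In part (ii) the outer terms are already $-A_6(n,\phi(n^3)-2)$ and $-\frac{n^2}{12}A_6(n,\phi(n^3)-4)$ (matching the pattern of Corollaries 3.2(ii) and 3.3(ii)), while $\frac n6\,s\,A(n,s-1)=\frac n6\cdot\frac{(2-2^{\phi(n^3)-2})(3-3^{\phi(n^3)-2})}{2\cdot6^{\phi(n^3)-2}}\,B_{\phi(n^3)-2,\chi_n}\equiv\frac{91n}6 B_{\phi(n^3)-2,\chi_n}\pmod{n^3}$ by the same reduction of the middle factor to $91$. Collecting the pieces yields the two stated congruences.

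The main obstacle is the $n$-adic bookkeeping in part (i): because $\frac{B_{\phi(n^3),\chi_n}}{\phi(n^3)}$ has $p$-adic valuation $-3\,v_p(n)$, the bracket must be controlled to correspondingly high precision, so $a-1$ and $b-1$ have to be expanded modulo $n^6$ (using $\phi(n^3)=n^2\phi(n)$, $2^{\phi(n)}=1+nq_2(n)$, $3^{\phi(n)}=1+nq_3(n)$ and the binomial theorem) in order that $\frac{a-1}{n^3}$ and $\frac{b-1}{n^3}$ be valid modulo $n^3$. The secondary point is to justify, through Lemma 2.1 and the von Staudt--Clausen theorem, that $\frac{B_{\phi(n^3)-2,\chi_n}}{\phi(n^3)-2}$ and $B_{\phi(n^3)-2,\chi_n}$ are $p$-adic integers for every $p\mid n$ (which holds since $p\ge5$ gives $p-1\nmid\phi(n^3)-2$), so that the discarded higher-order contributions genuinely vanish modulo $n^3$.
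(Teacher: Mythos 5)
Your proposal is correct and follows essentially the same route as the paper: specialize Theorem 3.1 at $m=6$ with $k=1$ and $k=2$, evaluate $\bigl(\frac{(2-2^{\phi(n^3)})(3-3^{\phi(n^3)})}{2\cdot 6^{\phi(n^3)}}-1\bigr)\frac{B_{\phi(n^3),\chi_n}}{\phi(n^3)}$ via Lemma 2.3, $2^{\phi(n)}=1+nq_2(n)$, $3^{\phi(n)}=1+nq_3(n)$ and (3.5), and reduce the lower-order terms modulo $n$ and $n^2$ (your factor $91$) exactly as in Corollary 3.3, with your write-up merely more explicit about the $n$-adic bookkeeping. One remark: your third term $-\frac{n^2}{12}A_6(n,\phi(n^3)-4)$ is what Theorem 3.1 actually yields (consistent with the pattern of Corollaries 3.2(ii) and 3.3(ii)), so the printed $A_6(n,\phi(n^3)-6)$ in the corollary's statement appears to be a typographical slip rather than a defect of your argument.
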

       \begin{proof}
        From Lemma 2.3,  $2^{\phi(n)}=1+nq_2(n)$,  $3^{\phi(n)}=1+nq_3(n)$ and (3.5), we have $$ 
       \begin{aligned}
       	&\left(\frac{(2-2^{\phi(n^3)})(3-3^{\phi(n^3)})}{2 \cdot 6^{\phi(n^3)}}-1\right)\frac{B_{\phi(n^3),\chi_n}}{\phi(n^3)}\\
       	&\equiv2\cdot \frac{2^{1-\phi(n^3)}}{n^3}+\frac{3}{2}\cdot \frac{1-3^{\phi(n^3)}}{n^3}\\
       	&\equiv-2q_2(n)+n{q_2(n)}^2-\frac{2n^2}{3}{q_2(n)}^3
       	-\frac{3}{2}q_3(n)+\frac{3n}{4}{q_3(n)}^2-\frac{n^2}{2}{q_3(n)}^3\left(\bmod n^3\right).	
       \end{aligned}$$\par
       Taking $m=6$, $k=1$ and $m=6$, $k=2$ in Theorem 3.1, and using the above congruence, we get Corollary 3.4(i) and (ii) respectively in a way similar to Corollary 3.3.
       \end{proof}
    
     \begin{thm}
     	Let $n$, $k$, $m\in \mathbb{Z}^+$, $2\nmid n$,  $1 \le k < \phi(n^3)-3$ and $m \ge 2$, $n \equiv \pm 1 \left(\bmod m\right)$, $s=\phi(n^2)-k$. Then$$
     	T_{m,k}(n)\equiv  \begin{cases}
     		-A_m(n,s)+\frac{n}{m}sA(n,s-1)
     		\quad (\bmod n^2) & \text {if $2\mid k$, $(n,k)=1$,}\\
     		A(n,s)-\frac{B_{s+1,\chi_n}}{s+1}\\
     		\quad+\frac{kn}{m}A_m(n,s-1)\quad (\bmod n^2) &\text {if $2\nmid k,n\in {I(k,1)}$,} 
     	\end{cases}$$
     	where $I(k,\gamma)=\{n > 1$: $p-1 \nmid k+\gamma$, \text{if $p\mid n$}\}.
     \end{thm}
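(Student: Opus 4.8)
The plan is to mirror the proof of Theorem 3.1 step for step, but carried out one power of $n$ lower, i.e. modulo $n^2$ and $p^{2l}$ in place of $n^3$ and $p^{3l}$. The gain is that every contribution of order $n^2/m^2$ — precisely the third term in Theorem 3.1, the one with coefficient $\frac{k(k+1)n^2}{2m^2}$ — now drops out, which is why the statement has only two summands in each case. A direct reduction of Theorem 3.1 modulo $n^2$ would require the extra Kummer adjustment $\phi(n^3)-k\to\phi(n^2)-k$ and, in any case, would not reach the cases with $3\mid n$ that are allowed here; indeed Theorem 3.2 only assumes $2\nmid n$, so $p\ge 3$ rather than $p\ge 5$, and the reason this weaker hypothesis now suffices is exactly that the discarded higher-order terms are the ones carrying the troublesome $1/3$ denominators.

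First I would fix $p\mid n$, write $n=dp^l$ with $p\nmid d$ and $p\ge 3$, and run the analogue of Lemma 2.4 modulo $p^{2l}$. By Euler's theorem $1/x^k\equiv x^{\phi(p^{2l})-k}$, and the power-sum identity behind (1.8) together with Lemma 2.2(iii) gives, with $s'=\phi(p^{2l})-k$,
$$
\sum_{\substack{x=1\\ p\nmid x}}^{[dp^l/m]}\frac{1}{x^k}
\equiv \frac{B_{s'+1}\!\left(\left\{\tfrac{-dp^l}{m}\right\}\right)-B_{s'+1}}{s'+1}
+B_{s'}\!\left(\left\{\tfrac{-dp^l}{m}\right\}\right)\frac{dp^l}{m}\pmod{p^{2l}},
$$
the only two surviving terms of the expansion. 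The key check is that each term carrying $(dp^l)^r$ with $r\ge2$ vanishes modulo $p^{2l}$: for $r\ge3$ this is automatic, and for $r=2$ it holds because the coefficient $\tfrac{s'}{2}B_{s'-1}(\{\cdot\})$ is $p$-integral. This is where the case hypotheses enter — for odd $k$ the condition $n\in I(k,1)$ gives $p-1\nmid k+1$, so $B_{s'-1}$ has no $p$-pole by Lemma 2.1, while for even $k$ the index $s'-1$ is odd and automatically satisfies $p-1\nmid s'-1$, and $(n,k)=1$ controls the remaining denominators attached to $s$.

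Next, since $n\equiv\pm1\pmod m$, the residue $\{-dp^l/m\}=\{-n/m\}$ is $1/m$ or $(m-1)/m$, and Lemma 2.2(ii) (via $B_j(1-\tfrac1m)=(-1)^jB_j(\tfrac1m)$) converts each $B_j(\{-n/m\})$ into $\pm B_j(1/m)$; collecting signs yields the Jacobi symbols $J_m(\cdot)$. Performing the inclusion--exclusion over the distinct prime factors $q_1,\dots,q_g$ of $d$ exactly as in (3.1) assembles $T_{m,k}(n)$ on the left and the Euler products $\prod_{p\mid n}(1-J_m(p)p^{s})$ and $\prod_{p\mid n}(1-p^{s-1})$ on the right, with $s=\phi(n^2)-k$. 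Finally, to pass from the exponent $s'$ to $s$ I would invoke congruences of exactly the type (3.2)--(3.4) used in Theorem 3.1, now read modulo $p^{2l}$ for the constant term and modulo $p^{l}$ for the $\tfrac{dp^l}{m}$-term (these follow from the Kummer-type Lemma 2.5 and Lemma 2.1, using $\phi(p^{2l})\mid(s-s')$ and $\phi(p^{l})\mid(s-s')$ respectively). Substituting these and repackaging in the notation $A_m(n,\cdot)$, $A(n,\cdot)$ and $B_{s+1,\chi_n}$ produces the two displayed cases. I expect the main obstacle to be the $p$-integrality bookkeeping of the second paragraph: confirming that the $r\ge2$ terms genuinely vanish modulo $p^{2l}$ and that the exponent-shifting congruences hold under the stated hypotheses; granting these, the remaining manipulation is identical to that of Theorem 3.1.
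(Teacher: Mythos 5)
Your route is, in essence, the paper's own proof of Theorem 3.2: write $n=dp^l$, truncate the expansion coming from (1.8) and Lemma 2.2(iii) to two terms modulo $p^{2l}$, convert $B_j(\{-n/m\})$ to $\pm B_j(1/m)$ via Lemma 2.2(ii) using $n\equiv\pm1\pmod m$, run the inclusion--exclusion of (3.1), and shift $s'=\phi(p^{2l})-k$ to $s=\phi(n^2)-k$ by the mod-$p^{2l}$/mod-$p^l$ analogues of (3.2)--(3.4) from Lemmas 2.5 and 2.1. Your accounting of where each hypothesis enters (parity of $s'-1$ when $2\mid k$; $n\in I(k,1)$ giving $p-1\nmid k+1$ when $2\nmid k$; $(n,k)=1$ to divide by $s$ in the Kummer shift, since $s\equiv-k\pmod p$) agrees with the paper, which simply cites Lemma 2.4 and says the rest is ``a similar argument'' to Theorem 3.1.

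There is, however, a genuine gap at the one point where you go beyond the paper: the claim that every term carrying $(dp^l)^r$ with $r\ge3$ vanishes modulo $p^{2l}$ ``automatically''. The $i=3$ term of the expansion is $\frac13\binom{s'}{2}B_{s'-2}\left(\left\{\frac{-dp^l}{m}\right\}\right)\frac{d^3p^{3l}}{m^3}$. For $p\ge5$ it is indeed $\equiv0\pmod{p^{2l}}$, since the denominator $3$ together with the at most one $p$ in the denominator of the Bernoulli value costs $p^{-1}$ at worst, and $3l-1\ge2l$. But for $p=3$ the $1/3$ itself costs a power of $p$: when $k$ is even, $s'-2$ is even, so $p-1=2\mid s'-2$ and $B_{s'-2}(\{\cdot\})$ can have $3$-adic valuation $-1$; the term is then guaranteed divisible only by $3^{3l-2}$, and $3l-2<2l$ when $l=1$. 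The binomial coefficient rescues you only when $3\mid s'(s'-1)$, and since $s'\equiv-k\pmod 3$ this fails exactly when $k\equiv1\pmod3$. So for $3\|n$ and even $k\equiv4\pmod6$ (e.g.\ $k=4,10,\dots$) the discarded $i=3$ term can survive modulo $9$ --- precisely in the new regime $3\mid n$ that you correctly identify as what Theorem 3.2 adds over Theorem 3.1, and your heuristic that the ``troublesome $1/3$ denominators'' live only in the discarded terms points the wrong way, because discarding them is exactly what needs justification mod $p^{2l}$. (Odd $k$ is safe: there $s'-2$ is odd, $B_{s'-2}(\{\cdot\})$ is $3$-integral, and $3l-1\ge2l$.) In fairness, the paper shares this lacuna: its proof invokes Lemma 2.4 at primes $p\ge3$, although Lemma 2.4 is stated --- and its proof, which needs $1/3\in\mathbb{Z}_p$, is valid --- only for $p>3$. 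To close the gap one needs a separate treatment of the $i=3$ term at $p=3$ in the even-$k$ case, or an additional hypothesis there (such as $9\mid n$, or $k\not\equiv4\pmod6$ when $3\mid n$).
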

      \begin{proof}
       Let $n=dp^l$ with prime $p\ge 3$, $p\nmid d$, $l \ge 1$, By Lemma 2.4, we have $$
      \sum_{\substack{x=1 \\p\nmid x}}^{[dp^l/m]} \frac{1}{x^k}\equiv \begin{cases} \frac{B_{\phi(p^{2l})-k+1}\left(\left\{\frac{-dp^l}{m}\right\}\right)}{\phi(p^{2l})-k+1}\\
      	\quad +B_{\phi(p^{2l})-k}\left(\left\{\frac{-dp^l}{m}\right\}\right)\frac{dp^l}{m}
      	\quad  (\bmod p^{2l})& \text {if $2\mid k$,}  \\
      	\frac{B_{\phi(p^{2l})-k+1}\left(\left\{\frac{-dp^l}{m}\right\}\right)-B_{\phi(p^{2l})-k+1}}{\phi(p^{2l})-k+1}\\
      	\quad +B_{\phi(p^{2l})-k}\left(\left\{\frac{-dp^l}{m}\right\}\right)\frac{dp^l}{m}
      	\quad(\bmod p^{2l})& \text {if $2\nmid k$, $p^l\in {I(k,1)}$,} 
      \end{cases}
      $$
      From Lemma 2.5 and Lemma 2.1, we see that when $2\mid k$, $(n,k)=1$, $$\frac{B_{\phi(p^{2l})-k+1}\left(\frac{1}{m}\right)}{\phi(p^{2l})-k+1}\equiv \frac{B_{\phi(n^2)-k+1}\left(\frac{1}{m}\right)}{\phi(n^2)-k+1}\quad (\bmod p^{2l})
      $$ and $$
      B_{\phi(p^{2l})-k}\left(\frac{1}{m}\right)\equiv\frac{\phi(p^{2l})-k}{\phi(n^2)-k} B_{\phi(n^2)-k}\left(\frac{1}{m}\right)\equiv B_{\phi(n^2)-k}\left(\frac{1}{m}\right)\quad (\bmod p^{l}),$$
      when $2\nmid k$, $n\in {I(k,1)}$, 
      $$
      \frac{B_{\phi(p^{2l})-k+1}\left(\frac{1}{m}\right)}{\phi(p^{2l})-k+1}\equiv \frac{B_{\phi(n^2)-k+1}\left(\frac{1}{m}\right)}{\phi(n^2)-k+1}\quad (\bmod p^{2l})
      $$ and $$
      \begin{aligned}
      B_{\phi(p^{2l})-k}\left(\frac{1}{m}\right)\equiv&\frac{\phi(p^{2l})-k}{\phi(n^2)-k} B_{\phi(n^2)-k}\left(\frac{1}{m}\right)\\
      \equiv& \frac{-k}{\phi(n^2)-k} B_{\phi(n^2)-k}\left(\frac{1}{m}\right)\quad (\bmod p^{l}).
      \end{aligned}
      $$
      Let $s=\phi(n^2)-k$. Theorem 3.2 follows from a similar argument of theorem 3.1.
      \end{proof}
   
     Putting $m=2, 3, 4$ (see \cite[Corollary3.1, Corollary 3.2, Corollary 3.3]{KKU}), $n=p$ in Theorem 3.2, we deduce the following four corollaries.
     \begin{cor}
     	Let $n>1$ be an odd positive integer, we have 
     	\begin{enumerate}[label=\upshape(\roman*), leftmargin=*, widest=ii]
     		\item   $T_{2,1}(n)\equiv-2q_n(2)+n{q_n(2)}^2\quad (\bmod n^2)$, \label{it:14}
     		\item $T_{2,2}(n)\equiv\dfrac{7n}{2}B_{\phi(n^2)-2,\chi_n} \quad (\bmod n^2).$ \label{it:15}
     	\end{enumerate} 
     \end{cor}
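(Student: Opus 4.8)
The plan is to specialize Theorem 3.2 to $m=2$ and exploit the special values of the Bernoulli polynomials at $\tfrac12$, namely $B_{2j+1}(\tfrac12)=0$ for $j\ge 1$ and $B_{2j}(\tfrac12)=(2^{1-2j}-1)B_{2j}$, together with the fact that $n$ odd forces $J_2\equiv 1$ everywhere. Writing $s=\phi(n^2)-k$, the quantities $A_2(n,\cdot)$ and $A(n,\cdot)$ appearing in Theorem 3.2 all involve a Bernoulli polynomial $B_{\,\cdot\,+1}(\tfrac12)$, so whenever the relevant subscript is odd the corresponding term drops out. This is exactly what kills the awkward terms in both parts and leaves a single clean expression to reduce.

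For part \ref{it:14} I take $k=1$, so $s=\phi(n^2)-1$ and the odd-$k$ branch of Theorem 3.2 applies. Since $s$ is odd, $B_s(\tfrac12)=0$, whence $A_2(n,s-1)=0$ and the term $\tfrac{kn}{m}A_2(n,s-1)$ vanishes; one checks that the hypothesis $n\in I(1,1)$ in Theorem 3.2 is only needed to control this now-vanishing term, so the computation is valid for every odd $n$. Expanding $A(n,s)$ via $B_{\phi(n^2)}(\tfrac12)=(2^{1-\phi(n^2)}-1)B_{\phi(n^2)}$ and using $B_{\phi(n^2),\chi_n}=B_{\phi(n^2)}\prod_{p\mid n}(1-p^{s})$, the pieces $A(n,s)$ and $-\tfrac{B_{s+1,\chi_n}}{s+1}$ collapse to
$$T_{2,1}(n)\equiv\bigl(2^{1-\phi(n^2)}-2\bigr)\frac{B_{\phi(n^2),\chi_n}}{\phi(n^2)}\pmod{n^2}.$$
I then convert this to Euler-quotient form as in Corollary 3.1: using the $n^2$-analogue of $(3.5)$, namely $\tfrac{n^2B_{\phi(n^2),\chi_n}}{\phi(n^2)}\equiv1\pmod{n^2}$ (which follows from $pB_{\phi(p^{2l})}\equiv p-1\pmod{p^{2l}}$) together with $2^{\phi(n^2)}\equiv1\pmod{n^2}$, the right-hand side reduces to $2\cdot\tfrac{1-2^{\phi(n^2)}}{n^2}$. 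Substituting $2^{\phi(n)}=1+nq_n(2)$ and $\phi(n^2)=n\phi(n)$ and expanding $(1+nq_n(2))^n-1$ by the binomial theorem modulo $n^2$, only the linear and quadratic terms survive and yield $-2q_n(2)+nq_n(2)^2$.

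For part \ref{it:15} I take $k=2$, so $s=\phi(n^2)-2$ is even and the even-$k$ branch of Theorem 3.2 applies, its hypothesis $(n,2)=1$ holding since $n$ is odd. Now $s+1$ is odd, so $B_{s+1}(\tfrac12)=0$ and $A_2(n,s)=0$; the surviving term is $\tfrac n2\,s\,A(n,s-1)=\tfrac n2\prod_{p\mid n}(1-p^{s-1})B_s(\tfrac12)$. Using $B_s(\tfrac12)=(2^{1-s}-1)B_s$ and $B_{s,\chi_n}=B_s\prod_{p\mid n}(1-p^{s-1})$ turns this into $\tfrac n2\bigl(2^{3-\phi(n^2)}-1\bigr)B_{\phi(n^2)-2,\chi_n}$. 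Since $2^{\phi(n^2)}\equiv1\pmod{n^2}$ gives $2^{3-\phi(n^2)}\equiv 8\pmod{n^2}$, and $2^{3-\phi(n^2)}-8=8\,\tfrac{1-2^{\phi(n^2)}}{2^{\phi(n^2)}}$ carries enough $p$-divisibility to absorb the denominators of $B_{\phi(n^2)-2,\chi_n}$, I may replace $2^{3-\phi(n^2)}-1$ by $7$, obtaining $T_{2,2}(n)\equiv\tfrac{7n}{2}B_{\phi(n^2)-2,\chi_n}\pmod{n^2}$.

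The visible algebra is light; the delicate point throughout is the $p$-adic bookkeeping. The numbers $B_{\phi(n^2),\chi_n}$ and $B_{\phi(n^2)-2,\chi_n}$ fail to be $p$-integral at primes $p\mid n$ with $(p-1)$ dividing their subscript (notably $p=3$ when $3\mid n$), so the steps that replace a power of $2$ by its residue modulo $n^2$, and that replace $\tfrac{B_{\phi(n^2),\chi_n}}{\phi(n^2)}$ by $\tfrac1{n^2}$, must be justified by comparing $p$-adic valuations rather than by naive cancellation. Confirming that the factor $1-2^{\phi(n^2)}$ in part \ref{it:14} and the extra factor of $n$ in part \ref{it:15} supply exactly enough divisibility to annihilate the error terms modulo $n^2$, and that the branch hypotheses of Theorem 3.2 are either met or rendered irrelevant by a vanishing term, is the main obstacle; it is dispatched by the von Staudt--Clausen theorem together with the $n^2$-analogue of $(3.5)$.
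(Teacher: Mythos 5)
Your overall route is exactly the paper's: the paper proves this corollary only implicitly (``putting $m=2$ in Theorem 3.2''), with the written proof of Corollary 3.1 as the model, i.e.\ kill terms via $B_{2j+1}(\tfrac12)=0$, evaluate $B_{2j}(\tfrac12)=(2^{1-2j}-1)B_{2j}$, and convert with the $n^2$-analogue of (3.5) and $2^{\phi(n)}=1+nq_2(n)$. Your part (ii) is fine: the even branch's hypothesis $(n,k)=(n,2)=1$ holds for every odd $n$, and your valuation check that the factor $2^{3-\phi(n^2)}-8$ absorbs the pole of $B_{\phi(n^2)-2,\chi_n}$ at $p=3$ (one gets $3$-valuation at least $3v_3(n)-1\geq 2v_3(n)$) is correct.

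There is, however, a genuine gap in part (i) when $3\mid n$. Your claim that the hypothesis $n\in I(1,1)$ of Theorem 3.2 ``is only needed to control the now-vanishing term $\frac{kn}{m}A_2(n,s-1)$'' is false. In the derivation of Lemma 2.4 and Theorem 3.2, the condition $p-1\nmid k+1$ (which for odd $n$ and $k=1$ means precisely $3\nmid n$) is what allows one to discard the second-order term $\frac{s'}{2}B_{s'-1}\bigl(\tfrac12\bigr)\frac{n^2}{4}$ modulo $p^{2l}$: for $k=1$ the index $s'-1$ is \emph{even}, so when $p=3$ the von Staudt--Clausen theorem puts a $3$ in the denominator of $B_{s'-1}$ and this term has exact $3$-adic valuation $2v_3(n)-1$, not $\geq 2v_3(n)$. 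Concretely, for $n=9$ one computes $T_{2,1}(9)=1+2^{-1}+4^{-1}\equiv 22\pmod{81}$, whereas your intermediate congruence gives $(2^{1-54}-2)\frac{B_{54,\chi_9}}{54}\equiv -2\cdot\frac{2^{54}-1}{81}\equiv -32\equiv 49\pmod{81}$ (using $3B_{54}\equiv 2\pmod{81}$ and $v_3(2^{54}-1)=4$), so that congruence is simply false for $3\mid n$. Your final answer is nonetheless correct for all odd $n$ --- it is Cai's (1.2) --- but only because your next step commits a second, compensating error: in expanding $-2\bigl((1+nq_2(n))^{n}-1\bigr)/n^2$ you also discard the cubic term $-2\binom{n}{3}nq_2(n)^3$, which for $3\mid n$ is likewise only divisible by $n^2/3$ (at $n=9$ it contributes $27\pmod{81}$, exactly cancelling the $-27$ dropped before). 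Two cancelling mistakes do not prove the $3\mid n$ case: to cover it honestly you must either carry the $B_{s'-1}(\tfrac12)$ term and the cubic binomial term through the computation and exhibit their cancellation, or run your argument only for $(n,3)=1$ and settle $3\mid n$ by quoting (1.2). (To be fair, the paper itself glosses over the same point, since its blanket appeal to Theorem 3.2 also oversteps that theorem's hypothesis $n\in I(1,1)$ when $k=1$.)
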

     
     \begin{cor}
     	Let $n>1$ be a positive integer with $(n,6)=1$,
     	we have 
     	\begin{enumerate}[label=\upshape(\roman*), leftmargin=*, widest=ii]
     		\item   $T_{3,1}(n)\equiv \dfrac{3}{2}\left(-q_3(n)+\dfrac{n}{2}{q_3(n)}^2\right)+\dfrac{n}{3}A_3(n,\phi(n^2)-2)\quad (\bmod n^2)$, \label{it:16}
     		\item $T_{3,2}(n)\equiv-A_3(n,\phi(n^2)-2)+\dfrac{13n}{3}B_{\phi(n^2)-2,\chi_n}\quad (\bmod n^2).$ \label{it:17}
     	\end{enumerate}  
     \end{cor}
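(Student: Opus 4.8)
The plan is to apply Theorem 3.2 with $m=3$, taking $k=1$ for part (i) and $k=2$ for part (ii), and then to rewrite the resulting combinations of Bernoulli polynomials evaluated at $\frac{1}{3}$ in terms of the Euler quotient $q_3(n)$ and of the generalized Bernoulli number $B_{\phi(n^2)-2,\chi_n}$, exactly paralleling the proof of Corollary 3.3. First I would verify the hypotheses of Theorem 3.2: since $(n,6)=1$ we have $2\nmid n$, $n\equiv\pm1\pmod 3$, and every prime $p\mid n$ satisfies $p\ge 5$, so $p-1\nmid 2$. Hence the odd case $k=1$ falls under $n\in I(k,1)$, the even case $k=2$ satisfies $(n,k)=(n,2)=1$, and $1\le k<\phi(n^3)-3$ holds trivially.

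For part (i) ($k=1$, $s=\phi(n^2)-1$), Theorem 3.2 gives $T_{3,1}(n)\equiv A(n,s)-\frac{B_{s+1,\chi_n}}{s+1}+\frac{n}{3}A_3(n,s-1)\pmod{n^2}$, and the last summand is already $\frac{n}{3}A_3(n,\phi(n^2)-2)$. The key step is to combine the first two terms: using $B_{\phi(n^2),\chi_n}=B_{\phi(n^2)}\prod_{p\mid n}(1-p^{\phi(n^2)-1})$ together with Lemma 2.3 in the form $B_{\phi(n^2)}(\frac{1}{3})=\frac{3-3^{\phi(n^2)}}{2\cdot 3^{\phi(n^2)}}B_{\phi(n^2)}$, one factors out $\frac{B_{\phi(n^2),\chi_n}}{\phi(n^2)}$ to obtain $A(n,s)-\frac{B_{\phi(n^2),\chi_n}}{\phi(n^2)}=\bigl(\frac{3-3^{\phi(n^2)}}{2\cdot 3^{\phi(n^2)}}-1\bigr)\frac{B_{\phi(n^2),\chi_n}}{\phi(n^2)}$. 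Since $3^{\phi(n^2)}\equiv 1\pmod{n^2}$, this prefactor is $\equiv\frac{3}{2}(1-3^{\phi(n^2)})\pmod{n^2}$, and I would clear it against the mod-$n^2$ analogue of congruence (3.5), namely $\frac{n^2B_{\phi(n^2),\chi_n}}{\phi(n^2)}\equiv 1\pmod{n^2}$ (which follows from $pB_{\phi(p^{2l})}\equiv p-1\pmod{p^{2l}}$), reducing the whole expression to $\frac{3}{2}\cdot\frac{1-3^{\phi(n^2)}}{n^2}$. Finally I expand $3^{\phi(n^2)}=(1+nq_3(n))^{n}$ by the binomial theorem and reduce modulo $n^2$, which yields $\frac{1-3^{\phi(n^2)}}{n^2}\equiv -q_3(n)+\frac{n}{2}q_3(n)^2\pmod{n^2}$, giving part (i).

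For part (ii) ($k=2$, $s=\phi(n^2)-2$), Theorem 3.2 gives $T_{3,2}(n)\equiv -A_3(n,s)+\frac{n}{3}sA(n,s-1)\pmod{n^2}$, and $-A_3(n,s)=-A_3(n,\phi(n^2)-2)$ is left untouched. For the second summand I would substitute $A(n,s-1)=\prod_{p\mid n}(1-p^{\phi(n^2)-3})\frac{B_{\phi(n^2)-2}(\frac{1}{3})}{\phi(n^2)-2}$, cancel the factor $s=\phi(n^2)-2$, and apply Lemma 2.3 to $B_{\phi(n^2)-2}(\frac{1}{3})$; recognizing $\prod_{p\mid n}(1-p^{\phi(n^2)-3})B_{\phi(n^2)-2}=B_{\phi(n^2)-2,\chi_n}$ leaves $\frac{n}{3}\cdot\frac{3-3^{\phi(n^2)-2}}{2\cdot 3^{\phi(n^2)-2}}B_{\phi(n^2)-2,\chi_n}$. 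Because of the factor $\frac{n}{3}$, only the value of $\frac{3-3^{\phi(n^2)-2}}{2\cdot 3^{\phi(n^2)-2}}$ modulo $n$ is needed; using $3^{\phi(n^2)}\equiv 1\pmod{n^2}$ gives $3^{\phi(n^2)-2}\equiv\frac{1}{9}$, so this factor is $\equiv 13$, producing $\frac{13n}{3}B_{\phi(n^2)-2,\chi_n}$ and hence part (ii).

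The main obstacle is not conceptual but the bookkeeping in the Euler-quotient reduction: one must justify the mod-$n^2$ version of (3.5) and track, in the binomial expansion of $(1+nq_3(n))^{n}$, exactly the terms surviving modulo $n^2$ (the linear and quadratic ones), while treating $\frac{n}{3}$ as $n\cdot 3^{-1}$, legitimate since $(n,3)=1$. Note that, in contrast to Corollary 3.3(ii), no condition $5\nmid n$ is required here, because the even-$k$ hypothesis of Theorem 3.2 is merely $(n,k)=1$, which $k=2$ satisfies for odd $n$.
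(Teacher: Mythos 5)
Your proposal is correct and follows exactly the route the paper intends: the paper deduces this corollary by putting $m=3$ in Theorem 3.2, and your reductions (factoring out $\frac{B_{\phi(n^2),\chi_n}}{\phi(n^2)}$ via Lemma 2.3 and the mod-$n^2$ analogue of (3.5), expanding $(1+nq_3(n))^n$, and evaluating $3^{\phi(n^2)-2}\equiv \frac{1}{9}\pmod n$ to get the factor $13$) are precisely the computations carried out in the proofs of Corollaries 3.2 and 3.3, here truncated to modulus $n^2$. Your closing observation that the even-case hypothesis $(n,k)=1$ of Theorem 3.2 explains the absence of the condition $5\nmid n$ (needed in Corollary 3.3(ii) via $I'(2,2)$) is also accurate.
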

 
     \begin{cor}
     	Let $n>1$ be an odd positive integer, then
     	\begin{enumerate}[label=\upshape(\roman*), leftmargin=*, widest=ii]
     		\item   $T_{4,1}(n)\equiv-3q_2(n)+\dfrac{3n}{2}{q_2(n)}^2-nE(n,\phi(n^2)-2)\quad (\bmod n^2)\quad  \text{for $3\nmid n$,}$ \label{it:18}
     		\item
     		 $T_{4,2}(n)\equiv4E(n,\phi(n^2)-2)+7nB_{\phi(n^2)-2,\chi_n}\quad (\bmod n^2),$ \label{it:19}
     	\end{enumerate} 
     	where $E(n,k)=J_4(n)\prod_{p \mid n}\left(1-J_4(p)p^k\right)E_k$, $E_k$ is the $k$th Euler number. 
     \end{cor}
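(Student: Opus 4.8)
The plan is to specialize Theorem 3.2 to $m=4$, treating the odd exponent $k=1$ (part (i)) and the even exponent $k=2$ (part (ii)) separately, and then to rewrite the abstract quantities $A_4(n,\cdot)$ and $A(n,\cdot)$ in terms of Euler numbers and the Euler quotient $q_2(n)$, mirroring the proof of Corollary 3.3 but now working modulo $n^2$. For $k=1$ the second branch of Theorem 3.2 applies, and $n\in I(1,1)$ means $p-1\nmid 2$ for every $p\mid n$, i.e. $p\neq 3$; this is the source of the hypothesis $3\nmid n$ (the condition $p\neq 2$ being automatic since $n$ is odd). For $k=2$ the first branch applies and only $(n,2)=1$ is needed, with no extra condition on $3$, because for $m=4$ the odd-index values $B_{\phi(p^{2l})-3}(\tfrac14)$ are automatically $p$-integral (they are Euler numbers up to a power of $2$), which is exactly what lets the quadratic correction term in Lemma 2.4 vanish modulo $p^{2l}$.

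For part (i), with $s=\phi(n^2)-1$ I must evaluate $A(n,s)-\frac{B_{s+1,\chi_n}}{s+1}$ and $\frac{n}{4}A_4(n,s-1)$. Since $s+1=\phi(n^2)$ is even, Lemma 2.3 gives $B_{\phi(n^2)}(\tfrac14)=\frac{2-2^{\phi(n^2)}}{4^{\phi(n^2)}}B_{\phi(n^2)}$, and the identity $B_{m,\chi_n}=B_m\prod_{p\mid n}(1-p^{m-1})$ lets me factor both terms through $\frac{B_{\phi(n^2),\chi_n}}{\phi(n^2)}$, collapsing the difference to $\left(\frac{2-2^{\phi(n^2)}}{4^{\phi(n^2)}}-1\right)\frac{B_{\phi(n^2),\chi_n}}{\phi(n^2)}$. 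I then factor $\frac{2-2^{\phi(n^2)}}{4^{\phi(n^2)}}-1=\frac{(2^{\phi(n^2)}+2)(1-2^{\phi(n^2)})}{4^{\phi(n^2)}}$, and combine $1-2^{\phi(n^2)}$ with $\frac{B_{\phi(n^2),\chi_n}}{\phi(n^2)}$ as $\frac{1-2^{\phi(n^2)}}{n^2}\cdot\frac{n^2B_{\phi(n^2),\chi_n}}{\phi(n^2)}$. Here the mod-$n^2$ analogue of (3.5), namely $\frac{nB_{\phi(n^2),\chi_n}}{\phi(n)}\equiv 1\pmod{n^2}$ (equivalently $\frac{n^2B_{\phi(n^2),\chi_n}}{\phi(n^2)}\equiv 1$, which follows from $pB_{\phi(p^{2l})}\equiv p-1\pmod{p^{2l}}$), kills the second factor, while expanding $2^{\phi(n^2)}=(1+nq_2(n))^n$ via the binomial theorem gives $\frac{1-2^{\phi(n^2)}}{n^2}\equiv -q_2(n)+\frac{n}{2}q_2(n)^2\pmod{n^2}$; with $2^{\phi(n^2)}+2\equiv 3$ and $4^{\phi(n^2)}\equiv 1\pmod{n^2}$ this yields $-3q_2(n)+\frac{3n}{2}q_2(n)^2$. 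For the second term, $s-1=\phi(n^2)-2$ is even, so the relation $E_{2N}=-4^{2N+1}\frac{B_{2N+1}(1/4)}{2N+1}$ converts $\frac{B_{\phi(n^2)-1}(1/4)}{\phi(n^2)-1}$ into $-E_{\phi(n^2)-2}/4^{\phi(n^2)-1}$, whence $\frac{n}{4}A_4(n,s-1)\equiv -nE(n,\phi(n^2)-2)\pmod{n^2}$ after using $4^{\phi(n^2)}\equiv 1$. Adding the two contributions gives (i).

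For part (ii), with $s=\phi(n^2)-2$ I must evaluate $-A_4(n,s)$ and $\frac{n}{4}sA(n,s-1)$. The first is handled by the same Euler-number substitution, giving $-A_4(n,s)=E(n,\phi(n^2)-2)/4^{\phi(n^2)-1}\equiv 4E(n,\phi(n^2)-2)\pmod{n^2}$. For the second, the factor $\frac{1}{s}$ cancels, so $sA(n,s-1)=\prod_{p\mid n}(1-p^{\phi(n^2)-3})B_{\phi(n^2)-2}(\tfrac14)$; Lemma 2.3 together with $2^{\phi(n^2)}\equiv 1\pmod{n^2}$ gives $\frac{2-2^{\phi(n^2)-2}}{4^{\phi(n^2)-2}}\equiv 28$, so that $\frac{n}{4}sA(n,s-1)\equiv 7n\prod_{p\mid n}(1-p^{\phi(n^2)-3})B_{\phi(n^2)-2}=7nB_{\phi(n^2)-2,\chi_n}$ by the defining identity for $B_{s,\chi_n}$. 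Summing gives (ii).

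The algebraic simplifications of $A_4$ and $A$ are routine; the main obstacle is the $p$-adic bookkeeping. One must track each factor to exactly the precision needed — in particular establishing $2^{\phi(n^2)}\equiv 4^{\phi(n^2)}\equiv 1\pmod{n^2}$, the mod-$n^2$ von Staudt--Clausen relation, and the binomial expansion $\frac{1-2^{\phi(n^2)}}{n^2}\equiv -q_2(n)+\frac{n}{2}q_2(n)^2$ — so that multiplying by the explicit factor $n$ really does discard everything beyond $\pmod{n^2}$, and so that the Euler-number substitution is carried out with the correct power of $4$.
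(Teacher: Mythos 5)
Your proposal is correct and follows exactly the route the paper intends: the paper states this corollary without a written proof, merely as the specialization $m=4$ of Theorem~3.2, and your computation carries out that specialization with precisely the ingredients the paper uses for the mod-$n^3$ analogue (Corollary~3.3) --- the branch selection by parity of $k$ (with $n\in I(1,1)$ correctly identified as the source of $3\nmid n$ in (i)), Lemma~2.3, the identity $E_{2N}=-4^{2N+1}B_{2N+1}(\tfrac14)/(2N+1)$, the mod-$n^2$ analogue of (3.5), the expansion $2^{\phi(n^2)}=(1+nq_2(n))^n$, and von Staudt--Clausen. I verified the arithmetic (e.g. $\frac{1-2^{\phi(n^2)}}{n^2}\equiv -q_2(n)+\frac{n}{2}q_2(n)^2 \pmod{n^2}$ and $\frac{2-2^{\phi(n^2)-2}}{4^{\phi(n^2)-2}}\equiv 28 \pmod{n^2}$) and it reproduces both stated congruences.
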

 
     \begin{cor}
     For prime $p \equiv \pm 1 \left(\bmod m\right)$, $m > 4$, we have$$
     \sum_{\substack{x=1 \\p\nmid x}}^{[p/m]}\frac{1}{x^2}\equiv -J_m(p)\frac{B_{\phi(p^2)-1}\left(\frac{1}{m}\right)}{\phi(p^2)-1}+\frac{p}{2m}B_{p-2}\left(\frac{1}{m}\right)\quad (\bmod p^2).$$	
     \end{cor}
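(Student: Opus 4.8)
The plan is to read the sum as a special case of Theorem 3.2. For a prime $p$ and any $m \ge 2$, every $x$ with $1 \le x \le [p/m]$ satisfies $1 \le x < p$, so $\chi_p(x) = 1$ and the condition $p \nmid x$ is automatic; hence $\sum_{x=1,\,p\nmid x}^{[p/m]} x^{-2} = T_{m,2}(p)$. Since $k = 2$ is even, $(p,2) = 1$, and $p \equiv \pm 1 \pmod m$ holds by hypothesis, the even branch of Theorem 3.2 applies with $n = p$ and $k = 2$. Writing $s = \phi(p^2) - 2$, it gives $T_{m,2}(p) \equiv -A_m(p,s) + \frac{p}{m}\,s\,A(p,s-1) \pmod{p^2}$, and the whole task reduces to simplifying the two terms on the right.

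First I would simplify the products defining $A_m$ and $A$. Because $n = p$ is prime, each product $\prod_{p \mid n}$ collapses to the single factor indexed by $p$, so $A_m(p,s) = J_m(p)\bigl(1 - J_m(p)p^{s}\bigr)\frac{B_{s+1}(1/m)}{s+1}$ and $A(p,s-1) = \bigl(1 - p^{s-1}\bigr)\frac{B_s(1/m)}{s}$. Since $s = p^2 - p - 2 \ge 2$, both $p^{s}$ and $p^{s-1}$ vanish modulo $p^2$, so the bracketed Euler factors are $\equiv 1$. Moreover $s+1 = \phi(p^2)-1 \equiv -1 \pmod{p-1}$, so $p-1 \nmid s+1$ and Lemma 2.1 guarantees $B_{s+1}(1/m)/(s+1) \in \mathbb{Z}_p$; thus $-A_m(p,s) \equiv -J_m(p)\frac{B_{\phi(p^2)-1}(1/m)}{\phi(p^2)-1} \pmod{p^2}$, which is exactly the first term of the claim. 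For the second term the factor $\frac{p}{m}$ already carries one power of $p$, so $\frac{p}{m}\,s\,A(p,s-1) \equiv \frac{p}{m} B_{s}(1/m) = \frac{p}{m} B_{\phi(p^2)-2}(1/m) \pmod{p^2}$, and it remains only to evaluate $B_{\phi(p^2)-2}(1/m)$ modulo $p$.

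The heart of the argument is this last reduction. Here I would invoke Kummer's congruence in the form of Lemma 2.5: the index $\phi(p^2)-2 = p(p-1)-2$ is even and satisfies $p-1 \nmid \phi(p^2)-2$, so Lemma 2.5 with $\alpha = 1$ lets me replace $B_{\phi(p^2)-2}(1/m)/(\phi(p^2)-2)$ by $B_{l}(1/m)/l$ for the least even positive $l$ congruent to $\phi(p^2)-2 \equiv -2 \pmod{p-1}$; combined with $\phi(p^2)-2 \equiv -2 \pmod p$, this turns $\frac{p}{m} B_{\phi(p^2)-2}(1/m)$ into the stated order-$p$ correction carried by a fixed low-index Bernoulli polynomial. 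Throughout, Lemma 2.2(ii), namely $B_j(1-x) = (-1)^j B_j(x)$, is what lets me replace $B_j(\{-p/m\})$ by $\pm B_j(1/m)$ and absorb the sign into $J_m(p)$, so that the two cases $p \equiv 1$ and $p \equiv -1 \pmod m$ are handled uniformly.

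I expect the main obstacle to be precisely this order-$p$ correction term. The leading term is pinned down cleanly, since the high power of $p$ kills the Euler-factor correction and Lemma 2.1 secures $p$-integrality; but the second term forces me to carry the value $B_{\phi(p^2)-2}(1/m)$ down to small index via Lemma 2.5 while tracking the rational prefactor modulo $p$ and the parity and sign bookkeeping coming from Lemma 2.2(ii). Getting the exact coefficient and index in that reduction right, rather than the overall shape of the congruence, is the delicate point.
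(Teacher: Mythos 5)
Your route is the paper's route: the proof in the paper is exactly one sentence — ``particular case of Theorem 3.2 for $n=p$, $k=2$, $m>4$, then Lemma 2.1 and Lemma 2.5'' — and your first two paragraphs carry that out correctly. Specializing the even branch with $s=\phi(p^2)-2$, collapsing the Euler factors via $p^{s}\equiv p^{s-1}\equiv 0\ (\mathrm{mod}\ p^2)$, and using Lemma 2.1 for $p$-integrality gives, as you say,
\[
T_{m,2}(p)\equiv -J_m(p)\,\frac{B_{\phi(p^2)-1}\left(\frac1m\right)}{\phi(p^2)-1}+\frac{p}{m}\,B_{\phi(p^2)-2}\left(\frac1m\right) \pmod{p^2}.
\]
The genuine gap is the one step you flag as ``delicate'' and then never execute: you assert that Lemma 2.5 ``turns $\frac{p}{m}B_{\phi(p^2)-2}(\frac1m)$ into the stated order-$p$ correction.'' It cannot. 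Lemma 2.5 compares two \emph{even} indices, so it can never produce the odd-index value $B_{p-2}(\frac1m)$ that appears in the statement. Carrying out your own recipe, the least even $l\equiv\phi(p^2)-2\equiv-2\ (\mathrm{mod}\ p-1)$ is $l=p-3$, and since $\phi(p^2)-2\equiv-2$ and $p-3\equiv-3\ (\mathrm{mod}\ p)$, Lemma 2.5 yields $B_{\phi(p^2)-2}(\frac1m)\equiv\frac23B_{p-3}(\frac1m)\ (\mathrm{mod}\ p)$, i.e.\ a second term $\frac{2p}{3m}B_{p-3}(\frac1m)$, not $\frac{p}{2m}B_{p-2}(\frac1m)$.

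This is not a cosmetic discrepancy: the substitution $B_{\phi(p^2)-2}(\frac1m)\equiv\frac12B_{p-2}(\frac1m)\ (\mathrm{mod}\ p)$ that the printed corollary would require is false. For $p=11$, $m=5$ one computes $B_8(\frac15)\equiv2$ and $B_9(\frac15)\equiv8\ (\mathrm{mod}\ 11)$, so $B_{108}(\frac15)\equiv\frac{108}{8}B_8(\frac15)\equiv5$ while $\frac12B_9(\frac15)\equiv4\ (\mathrm{mod}\ 11)$; directly, $T_{5,2}(11)=1+\frac14\equiv 92\ (\mathrm{mod}\ 121)$, which the $\frac{2p}{3m}B_{p-3}$ version reproduces, whereas the printed right-hand side is $\equiv 114\ (\mathrm{mod}\ 121)$. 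Likewise for $p=7$, $m=6$: $B_{40}(\frac16)\equiv0$ but $\frac12B_5(\frac16)\equiv1\ (\mathrm{mod}\ 7)$. So the corollary as printed is misstated, and what your argument (and the paper's cited lemmas) actually prove is the congruence with second term $\frac{p}{m}B_{\phi(p^2)-2}(\frac1m)$, equivalently $\frac{2p}{3m}B_{p-3}(\frac1m)$. The probable origin of the slip is that the odd-index quantity does occur here, but elsewhere: one has $\frac{B_{\phi(p^2)-1}(1/m)}{\phi(p^2)-1}\equiv-\frac12B_{p-2}(\frac1m)\ (\mathrm{mod}\ p)$, so $\frac12 B_{p-2}(\frac1m)$ measures the \emph{first} term modulo $p$, not the order-$p$ correction. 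Had you actually performed the Kummer reduction rather than asserting it lands on the stated form, the parity obstruction would have surfaced immediately; as written, your proposal claims an endpoint that Lemma 2.5 does not — and cannot — deliver.
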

     \begin{proof}
     	This is a particular case of Theorem 3.2 for $n=p$, $k=2$, $m > 4$. Then by $\phi(p^2)-3>2$, Lemma 2.1 and Lemma 2.5, we can complete the proof of Corollary 3.8. 
     \end{proof}
     Using  the above theorems and corollaries for $m=2$, we have the following results.
     \begin{thm}
     	 For positive integer $n>1$ with $(n,6)=1$, $k\in \mathbb{Z}^+$, we have
     	 \begin{multline}
     		\prod_{d \mid n}\left(\begin{array}{c}
     			kd-1 \\
     			(d-1) / 2
     		\end{array}\right)^{\mu(n / d)}\\ \quad\equiv(-1)^{\frac{\phi(n)}{2}}\left\{4^{k \phi(n)}-\frac{k(7-14k+8k^2)}{4}n^3\frac{B_{\phi(n)-2,\chi_n}}{\phi(n)-2}\right\}\quad (\bmod n^4) \notag
     	\end{multline}
     \end{thm}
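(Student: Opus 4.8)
The plan is to take the formal (\(p\)-adic) logarithm of the M\"obius-weighted product and reduce the whole expression to the sums \(T_{2,j}(n)\) already evaluated in Corollary 3.1 and Corollary 3.5. First, since \((n,6)=1\) every divisor \(d\mid n\) is odd, so \((d-1)/2=[d/2]\), and expanding the binomial coefficient as a product over its numerator gives
\[
(-1)^{(d-1)/2}\binom{kd-1}{(d-1)/2}=\prod_{i=1}^{(d-1)/2}\Bigl(1-\frac{kd}{i}\Bigr).
\]
Forming the M\"obius-weighted product over \(d\mid n\) and using \(\sum_{d\mid n}\mu(n/d)(d-1)/2=\tfrac12(\phi(n)-[n=1])=\phi(n)/2\), the sign collects to \((-1)^{\phi(n)/2}\). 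Hence, writing \(P:=\prod_{d\mid n}\bigl(\prod_{i=1}^{(d-1)/2}(1-kd/i)\bigr)^{\mu(n/d)}\), it suffices to prove \(P\equiv 4^{k\phi(n)}-\tfrac{k(7-14k+8k^2)}{4}n^3\,B_{\phi(n)-2,\chi_n}/(\phi(n)-2)\pmod{n^4}\), since the theorem's left side equals \((-1)^{\phi(n)/2}P\).

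The key step is the reduction to the sums \(T_{2,j}\). Taking logarithms and expanding each \(\log(1-kd/i)=-\sum_{j\ge1}(kd)^j/(ji^j)\), I would use the M\"obius-inversion identity
\[
\sum_{d\mid n}\mu(n/d)\,d^{j}\sum_{i=1}^{(d-1)/2}\frac{1}{i^{j}}=n^{j}\,T_{2,j}(n),
\]
which follows by writing \(\mathbf 1[(x,n)=1]=\sum_{e\mid(x,n)}\mu(e)\) in \(T_{2,j}(n)=\sum_{(x,n)=1,\,x\le[n/2]}x^{-j}\), substituting \(x=ey\) and setting \(d=n/e\). This yields \(\log P\equiv-\sum_{j\ge1}(kn)^{j}T_{2,j}(n)/j\pmod{n^4}\). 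A short valuation estimate (each \(T_{2,j}(n)\) is \(n\)-integral, and for \(p\mid n\) one has \(p\ge5\), so \((j-4)v_p(n)\ge v_p(j)\) for \(j\ge4\)) shows the terms with \(j\ge4\) are divisible by \(n^4\); only \(j=1,2,3\) remain.

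I would then substitute \(T_{2,1}(n)\pmod{n^3}\) from Corollary 3.1(i) and \(T_{2,2}(n)\pmod{n^2}\) from Corollary 3.5(ii). Since \(T_{2,3}(n)\pmod n\) is not a stated corollary, I would derive it from the odd branch of Theorem 3.2 (with \(m=2\), \(k=3\), reduced mod \(n\)): this gives \(T_{2,3}(n)\equiv(2^{3-\phi(n^2)}-2)B_{\phi(n^2)-2,\chi_n}/(\phi(n^2)-2)\equiv 6\,B_{\phi(n)-2,\chi_n}/(\phi(n)-2)\pmod n\), using \(2^{\phi(n^2)}\equiv1\pmod n\). The Euler-quotient part of \(-kn\,T_{2,1}(n)\) assembles, via \(2^{\phi(n)}=1+nq_2(n)\) and the truncated log series \(nq_2-\tfrac12(nq_2)^2+\tfrac13(nq_2)^3\equiv\log(1+nq_2)=\phi(n)\log2\pmod{n^4}\), into exactly \(k\phi(n)\log4\), producing the main term \(4^{k\phi(n)}\) after exponentiation. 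The three Bernoulli contributions \(B_{\phi(n^3)-2,\chi_n}\), \(B_{\phi(n^2)-2,\chi_n}\) and \(T_{2,3}(n)\) each multiply \(n^3\), so I only need them mod \(n\); using the Kummer congruence (Lemma 2.5, transported to \(\chi_n\) through \(B_{m,\chi_n}=B_m\prod_{p\mid n}(1-p^{m-1})\)) together with \(\phi(n^3)-2\equiv\phi(n^2)-2\equiv-2\pmod n\), each becomes a multiple of \(\beta:=B_{\phi(n)-2,\chi_n}/(\phi(n)-2)\). Collecting the \(n^3\)-coefficients gives \(-\tfrac74k+\tfrac72k^2-2k^3=-\tfrac{k}{4}(7-14k+8k^2)\); exponentiating \(\log P\) and noting \(4^{k\phi(n)}\equiv1\pmod n\) (so the \(O(n^3)\) correction passes through \(\exp\) unchanged mod \(n^4\)) yields \(P\), whence the theorem.

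The main obstacle is twofold. The delicate quantitative points are the exact truncation at \(j=3\), where the worst case \(p=5,\ j=5\) must be checked, and—more substantially—the consolidation of the three shifted generalized Bernoulli numbers into the single normalized quantity \(\beta\); this requires both the Kummer congruence for \(\chi_n\) and the auxiliary computation of \(T_{2,3}(n)\bmod n\), which is not supplied by any earlier corollary and must be extracted from Theorem 3.2. Keeping careful track of these \(n^3\)-level contributions, rather than the identity \(\sum_{d\mid n}\mu(n/d)d^jS_j(d)=n^jT_{2,j}(n)\) itself, is where the real work lies.
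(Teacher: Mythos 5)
Your proposal has the same skeleton as the paper's proof of Theorem 3.3: M\"obius inversion passes from $\binom{kd-1}{(d-1)/2}$ to the coprime product $D_n=\prod_{r\le (n-1)/2,\,(r,n)=1}(kn-r)/r$ (your sign computation $\sum_{d\mid n}\mu(n/d)(d-1)/2=\phi(n)/2$ matches (3.6)--(3.7)); everything modulo $n^4$ reduces to $T_{2,1}(n)\pmod{n^3}$, $T_{2,2}(n)\pmod{n^2}$ and $T_{2,3}(n)\pmod n$; and the inputs Corollary 3.1(i), Corollary 3.5 and Kummer's congruence assemble, exactly as in the paper's (3.8)--(3.10), into $2k\log(1+nq_2(n))=k\phi(n)\log 4$ plus the tally $-\tfrac74k+\tfrac72k^2-2k^3=-\tfrac k4(7-14k+8k^2)$ times $\beta=B_{\phi(n)-2,\chi_n}/(\phi(n)-2)$. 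The one genuine methodological difference is your $p$-adic logarithm, which linearizes what the paper does by expanding the finite product into elementary symmetric sums; your truncation criterion $(j-4)v_p(n)\ge v_p(j)$, with the worst case $p=5$, $j=5$, is the correct reason the tail $j\ge 4$ vanishes modulo $n^4$ for $(n,6)=1$.

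There are, however, two concrete defects. First, and most substantively: you source $T_{2,3}(n)\pmod n$ from the odd-$k$ branch of Theorem 3.2, whose hypothesis is $n\in I(3,1)$, i.e.\ $p-1\nmid 4$ for every $p\mid n$; this \emph{fails} at $p=5$, so your argument as written does not cover $5\mid n$, whereas Theorem 3.3 is asserted for all $(n,6)=1$. The paper avoids this by taking $m=2$, $k=3$ in Theorem 3.1, whose odd-branch hypothesis $(n,k+1)=(n,4)=1$ is automatic for odd $n$, yielding (3.9) $T_{2,3}(n)\equiv 6B_{\phi(n)-2,\chi_n}/(\phi(n)-2)\pmod n$ unconditionally; you should cite that instead (the congruence itself is the one you state, so the repair is local). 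Second, a rigor point in your key step: expanding $\log(1-kd/i)$ factor by factor is not $p$-adically legitimate, because for any $n$ with at least two prime factors there are divisors $d$ and indices $i\le (d-1)/2$ with $v_p(i)\ge v_p(d)$ (e.g.\ $d=n$, $i=p^{v_p(n)}$), and then $kd/i\notin p\mathbb{Z}_p$ and the series diverges. You must take the logarithm only \emph{after} the M\"obius step has produced the coprime product $\prod_{(r,n)=1}(1-kn/r)$, all of whose factors are $\equiv 1\pmod{p^{v_p(n)}}$ (equivalently, run the expansion with $k$ as a formal variable and specialize); your resummation identity $\sum_{d\mid n}\mu(n/d)d^jS_j(d)=n^jT_{2,j}(n)$ is exact over $\mathbb{Q}$ and then gives the same final formula $\log P\equiv-\sum_{j\le 3}(kn)^jT_{2,j}(n)/j\pmod{n^4}$. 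With these two repairs your proof is correct and is essentially a cleaner, logarithmic rewriting of the paper's argument.
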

    \begin{proof}
    	 Let $$
    	A_n=\left(\begin{array}{c}
    		kn-1 \\
    		(n-1) / 2
    	\end{array}\right),
    	$$
    	then
    	$$
    	A_n=\prod_{r=1}^{(n-1) / 2} \frac{kn-r}{r}=\prod_{d \mid n} \prod_{\substack{r=1 \\(r, n)=d}}^{(n-1) / 2} \frac{kn-r}{r}=\prod_{d \mid n} D_{n / d}=\prod_{d \mid n} D_d,
    	$$
    	in which
    	$$
    	D_d=\prod_{\substack{r=1 \\(r, d)=1}}^{(d-1) / 2} \frac{kd-r}{r} .
    	$$
    	It follows by the multiplicative version of 
    	Möbius inversion formula 
    	\begin{equation}\tag{3.6}
    	D_n=\prod_{d \mid n} A_d^{\mu(n / d)}=\prod_{d \mid n}\left(\begin{array}{c}
    		kd-1 \\
    		(d-1) / 2
    	\end{array}\right)^{\mu(n / d)}.
    	\end{equation}
    	On the other hand, we also have
    	\begin{align}\tag{3.7}
    		D_n=&\prod_{\substack{r=1 \\(r, n)=1}}^{(n-1) / 2} \frac{kn-r}{r} =(-1)^{\phi(n) / 2} \prod_{\substack{r=1 \\(r, n)=1}}^{(n-1) / 2}\left(1-\frac{kn}{r}\right)\notag\\
    		\equiv& (-1)^{\phi(n) / 2}\left\{kn \sum_{\substack{r=1 \\
    				(r, n)=1}}^{(n-1) / 2} \frac{1}{r}+k^2n^2\sum_{\substack{1 \leq r_1<r_2 \leq \frac{n-1}{2} \\ r_1 \neq r_2\\(r_k,n)=1}} \frac{1}{r_1 r_2}\right.\notag\\ &\left.\quad-k^3n^3\sum_{\substack{1 \leq r_1<r_2<r_3 \leq \frac{n-1}{2} \\r_{k_1} \neq r_{k_2}\\(r_k,n)=1}} \frac{1}{r_1 r_2 r_3}\right\}\quad (\bmod n^4).\notag
    	\end{align}
    	In view of 
    	\begin{align}
    		\sum\limits_{\substack{1 \leq r_1<r_2 \leq \frac{n-1}{2} \\ r_1 \neq r_2\\(r_k,n)=1}} \frac{1}{r_1 r_2}=&\frac{1}{2}\sum\limits_{\substack{1 \leq r_1, r_2 \leq \frac{n-1}{2} \\ r_1 \neq r_2}} \frac{1}{r_1 r_2}\notag\\
    		=&\frac{1}{2}\sum_{r_1=1}^{\frac{n-1}{2}} \frac{1}{r_1}\left(\sum_{r_2=1}^{\frac{n-1}{2}} \frac{1}{r_2}-\frac{1}{r_1}\right)
    		=\frac{1}{2}\left(\left(\sum_{r=1}^{\frac{n-1}{2}} \frac{1}{r}\right)^2-\sum_{r=1}^{\frac{n-1}{2}} \frac{1}{r^2}\right)\notag 
    	\end{align}
    	and 
    	\begin{align}
    		\sum_{\substack{1 \leq r_1<r_2<r_3 \leq \frac{n-1}{2} \\r_{k_1} \neq r_{k_2}\\(r_k,n)=1}} \frac{1}{r_1 r_2 r_3}=&\frac{1}{6}\sum_{\substack{1 \leq r_1, r_2, r_3 \leq \frac{n-1}{2} \\r_{k_1} \neq r_{k_2}}} \frac{1}{r_1 r_2 r_3}\notag\\
    		=&\frac{1}{6}\sum_{\substack{1 \leq r_1, i_2 \leq \frac{n-1}{2} \\ r_{k_1} \neq r_{k_2}}} \frac{1}{r_1 r_2}\left(\sum_{\substack{r_3=1}}^{\frac{n-1}{2}} \frac{1}{r_3}-\frac{1}{r_1}-\frac{1}{r_2}\right)\notag\\
    		=&\frac{1}{6}\left(\sum_{\substack{1 \leq r_1,r_2 \leq \frac{n-1}{2} \\ i_1 \neq r_2}} \frac{1}{r_1 r_2} \sum_{r_3=1}^{\frac{n-1}{2}} \frac{1}{r_3}-2 \sum_{\substack{1 \leq r_1, r_2 \leq \frac{n-1}{2} \\ r_1 \neq r_2}} \frac{1}{r_1 r_2^2}\right)\notag\\
    		=&\frac{1}{6}\left(\left(\sum_{r_1=1}^
    		{\frac{n-1}{2}} \frac{1}{r_1}\right)^3-3\sum_{r_1=1}^{\frac{n-1}{2}} \frac{1}{r_1} \sum_{r_2=1}^{\frac{n-1}{2}} \frac{1}{r_2^2}+2 \sum_{r_1=1}^{\frac{n-1}{2}} \frac{1}{r_1^3}\right),\notag 
    	\end{align}
    	(3.7) becomes 
    	\begin{equation}\tag{3.8}
    		\begin{gathered}
    		\begin{aligned}
    			D_n&=\prod_{\substack{r=1 \\(r, n)=1}}^{(n-1) / 2} \frac{kn-r}{r} =(-1)^{\phi(n) / 2} \prod_{\substack{r=1 \\(r, n)=1}}^{(n-1) / 2}\left(1-\frac{kn}{r}\right)\notag\\
    			&\equiv (-1)^{\phi(n) / 2}\left\{1-kn \sum_{\substack{r=1 \\
    					(r, n)=1}}^{(n-1) / 2} \frac{1}{r}+\frac{k^2n^2}{2}\left(\left(\sum_{r=1}^
    			{\frac{n-1}{2}} \frac{1}{r}\right)^2-\sum_{r=1}^{\frac{n-1}{2}} \frac{1}{r^2}\right)\right.\notag\\
    			&\quad\quad \left.-\frac{k^3n^3}{6}\left(\left(\sum_{r=1}^
    			{\frac{n-1}{2}} \frac{1}{r}\right)^3-3\sum_{r=1}^{\frac{n-1}{2}} \frac{1}{r} \sum_{r=1}^{\frac{n-1}{2}} \frac{1}{r^2}+2 \sum_{r=1}^{\frac{n-1}{2}} \frac{1}{r^3}\right)\right\}\quad (\bmod n^4).
    		\end{aligned}	
    		\end{gathered}
    	\end{equation}
    
    	Taking $m=2$, $k=3$ in Theorem 3.1, by using  $B_{2n+1}(\frac{1}{2})=0(n\ge 1) 
    	$, $B_{2n}(\frac{1}{2})=(2^{1-2n}-1)B_{2n}$ and the von Staudt-Clausen theorem, we have 
    	\begin{equation}\tag{3.9}
    		T_{2,3}(n)\equiv (2^{3-\phi(n)}-2)\frac{B_{\phi(n)-2,\chi_n}}{\phi(n)-2}\equiv 6\frac{B_{\phi(n)-2,\chi_n}}{\phi(n)-2}\quad (\bmod n)
    	\end{equation}for $(n,6)=1.$
    	From Corollary 3.5(ii) and the von Staudt-Clausen theorem, we get 
    	\begin{equation}\tag{3.10}
    		T_{2,2}(n)\equiv0\quad (\bmod n)
    	\end{equation}for $(n,6)=1.$
    	Applying Corollary 3.1(i), Corollary 3.5, (3.9) and (3.10) to (3.8), then by Kummer's congruence and $2^{\phi(n)}=1+nq_2(n)$, we have$$
    	\begin{aligned}
    		D_n
    		\equiv{}&(-1)^{\phi(n) / 2}\left\{(1+nq_n(2))^{2k}-\frac{k(7-14k+8k^2)}{4}n^3\frac{B_{\phi(n)-2,\chi_n}}{\phi(n)-2}\right\}\\	     
    		\equiv{}&(-1)^{\frac{\phi(n)}{2}}\left\{4^{k \phi(n)}-\frac{k(7-14k+8k^2)}{4}n^3\frac{B_{\phi(n)-2,\chi_n}}{\phi(n)-2}\right\}\quad (\bmod n^4). 
    	\end{aligned}$$
    	This together with (3.6) proves Theorem 3.3.
    \end{proof}
    \begin{xrem}
    	 \rm{Obviously, (1.7) holds.}
    \end{xrem}
      \begin{cor}[{\cite[Corollary 0.3]{JZC}}]
      	Let	$p > 5$ be a prime, then$$
      	\left(\begin{array}{c}
      		kp-1 \\
      		(p-1) / 2
      	\end{array}\right)\equiv 4^{k (p-1)}+\frac{k(7-14k+8k^2)}{12}p^3B_{p-3}\quad (\bmod p^4)
      	$$
      \end{cor}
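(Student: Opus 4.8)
The plan is to obtain the corollary as the special case $n=p$ of Theorem 3.3. Since $p>5$ is prime we have $(p,6)=1$ and $\phi(p)=p-1$, so the hypotheses of Theorem 3.3 hold with $n=p$ for every $k\in\mathbb{Z}^+$. First I would evaluate the M\"obius product on the left-hand side. The divisors of $p$ are $1$ and $p$, with $\mu(p/p)=\mu(1)=1$ and $\mu(p/1)=\mu(p)=-1$. The factor coming from $d=1$ is $\binom{k-1}{0}^{-1}=1$, so the whole product collapses to the single term $\binom{kp-1}{(p-1)/2}$, which is exactly the left-hand side of the corollary.

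Next I would specialize the right-hand side of Theorem 3.3. With $n=p$ the prefactor is $(-1)^{(p-1)/2}$, the leading term is $4^{k(p-1)}$, and the Bernoulli index $\phi(p)-2$ equals $p-3$, so the correction term reads $-\tfrac{k(7-14k+8k^2)}{4}\,p^3\,\tfrac{B_{p-3,\chi_p}}{p-3}$. The substantive step is to convert the generalized Bernoulli number $B_{p-3,\chi_p}$ into the ordinary Bernoulli number $B_{p-3}$. Applying the identity $B_{s,\chi_p}=B_s\prod_{q\mid p}(1-q^{\,s-1})=B_s(1-p^{\,s-1})$ from the introduction with $s=p-3$ gives $B_{p-3,\chi_p}=B_{p-3}(1-p^{\,p-4})$.

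Because the correction term already carries a factor $p^3$ and we work modulo $p^4$, only the residue of $\tfrac{B_{p-3,\chi_p}}{p-3}$ modulo $p$ is needed, and here I would invoke Lemma 2.1: since $0<p-3<p-1$ we have $p-1\nmid p-3$, whence $B_{p-3}/(p-3)$ and $B_{p-3}$ are both $p$-integral. As $p-4\ge1$ we get $1-p^{\,p-4}\equiv1\pmod p$, so $\tfrac{B_{p-3,\chi_p}}{p-3}\equiv\tfrac{B_{p-3}}{p-3}\pmod p$; then $p-3\equiv-3\pmod p$ together with $\gcd(3,p)=1$ yields
$$\frac{B_{p-3,\chi_p}}{p-3}\equiv-\frac{1}{3}\,B_{p-3}\pmod p.$$
Substituting this into the correction term converts the coefficient $-\tfrac{k(7-14k+8k^2)}{4}$ into $+\tfrac{k(7-14k+8k^2)}{12}$ and replaces $B_{p-3,\chi_p}/(p-3)$ by $B_{p-3}$, which together with the prefactor $(-1)^{(p-1)/2}$ from Theorem 3.3 gives the asserted congruence modulo $p^4$. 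The only point requiring care is this $p$-integrality bookkeeping, which legitimizes the reduction of the Bernoulli quotient modulo $p$; once Lemma 2.1 supplies it, everything else is a direct substitution into Theorem 3.3.
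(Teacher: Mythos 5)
Your proposal is correct and follows essentially the same route as the paper's own proof: both specialize Theorem 3.3 to $n=p$ (where the M\"obius product collapses to the single binomial coefficient $\binom{kp-1}{(p-1)/2}$) and reduce $\frac{B_{p-3,\chi_p}}{p-3}\equiv-\frac{B_{p-3}}{3}\pmod p$ via $B_{s,\chi_p}=B_s(1-p^{s-1})$, the paper citing Kummer's congruence where you invoke Lemma~2.1 for the same $p$-integrality bookkeeping. One caveat applying equally to your write-up and to the paper: the prefactor $(-1)^{(p-1)/2}$ inherited from Theorem 3.3 does not actually appear in the stated congruence (and cannot simply be absorbed, as Morley's congruence for $k=1$ shows), so the statement as printed is missing this sign rather than your argument being at fault.
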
 
    \begin{proof}
    This is a particular case of Theorem 3.3 for $n=p$, prime $p>5$. From Kummer's congruence, we have $$
    \frac{B_{p-1-2,\chi_n}}{p-1-2}\equiv \frac{B_{p-3}}{-3}\prod_{p \mid n}(1-p^{p-4})\equiv-\frac{B_{p-3}}{3}\quad (\bmod p).$$
    This together with Theorem 3.3 gets Corollary 3.9.
    \end{proof}
       
     \begin{cor}
     If $p>5$ is prime, then for any integer $l \ge 1$, we have
     \begin{multline}
      \left(\begin{array}{c} 
     	p^l-1 \\
     	(p^l-1) / 2
     \end{array}\right)\left/\left(\begin{array}{c}
     	p^{l-1}-1 \\
     	(p^{l-1}-1) / 2
     \end{array}\right)\right.\\
     \equiv(-1)^{\frac{\phi(p^l)}{2}}\left\{4^{ \phi(p^l)}-\frac{1}{4}p^{3l}\frac{B_{\phi(p^l)-2}}{\phi(p^l)-2}\right\}\quad (\bmod p^{4l})\notag
     \end{multline} 
     and $$
     (-1)^{\frac{l(p-1)}{2}}\left(\begin{array}{c}
     	p^l-1 \\
     	(p^l-1) / 2
     \end{array}\right)\equiv 4^{p^l-1}+\frac{4^{p^l-p}}{12}p^3B_{p-3}\quad (\bmod p^4).$$	
     \end{cor}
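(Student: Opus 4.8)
The plan is to specialize Theorem 3.3 at $n=p^l$, $k=1$ for the first congruence, and then to iterate it down in $l$ to obtain the second. First I would observe that for $n=p^l$ only the divisors $d=p^l$ and $d=p^{l-1}$ survive in the Möbius product, since $\mu(p^l/d)=0$ unless $p^l/d\in\{1,p\}$; with $k=1$ this collapses the left-hand side of Theorem 3.3 to the ratio $\binom{p^l-1}{(p^l-1)/2}\big/\binom{p^{l-1}-1}{(p^{l-1}-1)/2}$. On the right-hand side I would insert $k=1$, using $k(7-14k+8k^2)/4=1/4$ and $(p^l)^3=p^{3l}$, and rewrite the generalized Bernoulli number via $B_{s,\chi_{p^l}}=B_s(1-p^{s-1})$ with $s=\phi(p^l)-2$. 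Since $\phi(p^l)-3\ge l$ for $p>5$, we have $1-p^{\phi(p^l)-3}\equiv 1\pmod{p^l}$, hence $\frac{B_{\phi(p^l)-2,\chi_{p^l}}}{\phi(p^l)-2}\equiv\frac{B_{\phi(p^l)-2}}{\phi(p^l)-2}\pmod{p^l}$; because this term carries the weight $p^{3l}$ and the congruence is modulo $p^{4l}$, accuracy modulo $p^l$ suffices. Finally, as $p^{l-1}$ is odd, $(-1)^{\phi(p^l)/2}=(-1)^{p^{l-1}(p-1)/2}=(-1)^{(p-1)/2}$, which gives the first congruence.

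For the second congruence I would reduce the first one modulo $p^4$ and telescope in $l$. Write $C_l=\binom{p^l-1}{(p^l-1)/2}$, which is a $p$-adic unit (its leading term $4^{\phi(p^l)}$ is a unit), so the ratios make sense. For $l\ge 2$ we have $3l\ge 6\ge 4$, so $p^{3l}\equiv 0\pmod{p^4}$ and the Bernoulli term drops out, leaving the recursion $C_l\equiv(-1)^{(p-1)/2}4^{\phi(p^l)}C_{l-1}\pmod{p^4}$. The base case $l=1$ keeps the Bernoulli term: using $\phi(p)-2=p-3\equiv-3\pmod p$ (Kummer's congruence via Lemma 2.5) gives $\frac{B_{p-3}}{p-3}\equiv-\frac{B_{p-3}}{3}\pmod p$, whence $C_1\equiv(-1)^{(p-1)/2}\{4^{p-1}+\frac{1}{12}p^3B_{p-3}\}\pmod{p^4}$, which is precisely Corollary 3.9 with $k=1$.

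Then I would iterate the recursion from $l$ down to $1$: since every multiplier is a $p$-adic unit, transitivity of congruences modulo $p^4$ yields $C_l\equiv((-1)^{(p-1)/2})^{\,l-1}\,4^{\sum_{j=2}^{l}\phi(p^j)}\,C_1\pmod{p^4}$. Using the telescoping identity $\sum_{j=1}^{l}\phi(p^j)=p^l-1$ I get $\sum_{j=2}^{l}\phi(p^j)=p^l-p$, so the factor $4^{p^l-p}$ combines with the $4^{p-1}$ inside $C_1$ to produce $4^{p^l-1}$ on the leading term while leaving $4^{p^l-p}$ on the Bernoulli term. Collecting the total sign $((-1)^{(p-1)/2})^{\,l}=(-1)^{l(p-1)/2}$ and moving it to the left (it squares to $1$) gives $(-1)^{l(p-1)/2}C_l\equiv 4^{p^l-1}+\frac{4^{p^l-p}}{12}p^3B_{p-3}\pmod{p^4}$, as claimed; the case $l=1$ is recovered from the empty sum $4^{p-p}=1$.

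The main obstacle is bookkeeping rather than any deep idea: one must keep the two moduli apart (the exact $p^{4l}$ of the first congruence versus the coarser $p^4$ of the telescoped one), verify that the $p^{3l}$-weighted Bernoulli term vanishes modulo $p^4$ exactly when $l\ge 2$, and justify both the generalized-to-ordinary Bernoulli replacement and the sign reduction $(-1)^{\phi(p^l)/2}=(-1)^{(p-1)/2}$. The telescoping step itself is clean, since each multiplicative factor $(-1)^{(p-1)/2}4^{\phi(p^j)}$ is a $p$-adic unit.
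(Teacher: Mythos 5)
Your proposal is correct and follows essentially the same route as the paper: the first congruence is Theorem 3.3 specialized at $n=p^l$, $k=1$ (the M\"obius product collapsing to the ratio of the two binomial coefficients), and the second is obtained by multiplying the resulting congruences for exponents $1,2,\dots,l$ modulo $p^4$. Your write-up merely makes explicit the details the paper leaves implicit --- the replacement $B_{\phi(p^l)-2,\chi_{p^l}}\equiv B_{\phi(p^l)-2}\pmod{p^l}$, the vanishing of the $p^{3j}$-weighted term for $j\ge 2$, the base case via Corollary 3.9, and the sign bookkeeping $(-1)^{\phi(p^j)/2}=(-1)^{(p-1)/2}$ --- all of which check out.
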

     \begin{proof}
      The first congruence of Corollary 3.10 is a particular case for $n=p^l$.\par
     We let $l=1, 2, \cdots, l$ and multiply all the resulting congruences, the second one is proved.
     \end{proof}

      \begin{thm}
      	Let $n>1$ be a positive integer with $(n,6)=1$, $k\in \mathbb{Z}^+$, $v$ be any positive odd integer, then
      \begin{multline}
      	\prod_{d \mid n}\left(\begin{array}{c}
      		kd+[vd /2] \\
      		(vd-1) / 2
      	\end{array}\right)^{\mu(n / d)}\\
       \equiv4^{-k \phi(n)}+\frac{k(7v^2+14kv+8k^2)}{4}n^3\frac{B_{\phi(n)-2,\chi_n}}{\phi(n)-2}\quad (\bmod n^4).\notag 
      \end{multline}
   \end{thm}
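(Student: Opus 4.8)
The plan is to follow the pattern of the proof of Theorem 3.3. Since $(n,6)=1$ forces $n$ to be odd and $v$ is odd, $vn$ is odd and $[vd/2]=(vd-1)/2$ for every $d\mid n$. Writing $N=(vn-1)/2$, I would first record
$$A_n=\binom{kn+N}{N}=\prod_{j=1}^{N}\frac{kn+j}{j},$$
set $D_d=\prod_{\substack{j=1\\(j,d)=1}}^{(vd-1)/2}\frac{kd+j}{j}$, and apply the multiplicative M\"obius inversion exactly as in (3.6) to obtain
$$\prod_{d\mid n}\binom{kd+[vd/2]}{(vd-1)/2}^{\mu(n/d)}=D_n=\prod_{\substack{j=1\\(j,n)=1}}^{N}\Bigl(1+\frac{kn}{j}\Bigr).$$
Expanding this product modulo $n^4$ with the same elementary-symmetric-function identities used in (3.8), it suffices to control the three power sums $S_r=\sum_{j=1,\,(j,n)=1}^{N}j^{-r}$ for $r=1,2,3$, since
$$D_n\equiv 1+knS_1+\frac{k^2n^2}{2}\bigl(S_1^2-S_2\bigr)+\frac{k^3n^3}{6}\bigl(S_1^3-3S_1S_2+2S_3\bigr)\pmod{n^4}.$$

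The new ingredient, absent from Theorem 3.3, is that the range $N=(vn-1)/2=tn+(n-1)/2$ with $t=(v-1)/2$ exceeds one period of $\chi_n$. I would therefore split each $S_r$ into $t$ complete residue blocks $j=an+i$ ($0\le a\le t-1$, $1\le i\le n$) plus the terminal partial block $j=tn+i$ ($1\le i\le (n-1)/2$), and binomially expand $(an+i)^{-r}$ in powers of $n$. The complete blocks produce the full-period sums $F_r=\sum_{i=1}^{n-1}\chi_n(i)i^{-r}$, which I would reduce to the half-period sums $T_{2,r}(n)$ by the involution $i\mapsto n-i$: one checks $F_3\equiv0\pmod n$, $F_2\equiv 2T_{2,2}(n)+2nT_{2,3}(n)\pmod{n^2}$ and $F_1\equiv -nT_{2,2}(n)-n^2T_{2,3}(n)\pmod{n^3}$. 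Summing the arithmetic progressions $\sum a$ and $\sum a^2$ over the blocks, with the terminal block contributing $T_{2,r}(n)$ to leading order and $t(t+1)=\tfrac{v^2-1}{4}$, I expect the clean reductions
\begin{gather*}
S_1\equiv T_{2,1}(n)-\tfrac{v^2-1}{4}\,nT_{2,2}(n)\pmod{n^3},\\
S_2\equiv vT_{2,2}(n)\pmod{n^2},\qquad S_3\equiv T_{2,3}(n)\pmod{n},
\end{gather*}
the $T_{2,3}$ contributions to $S_1$ cancelling.

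Finally I would substitute the $m=2$ evaluations already at hand: Corollary 3.1(i) for $T_{2,1}(n)\pmod{n^3}$, Corollary 3.1(ii) (equivalently (3.10)) for $T_{2,2}(n)$, and (3.9) for $T_{2,3}(n)\pmod n$, converting $B_{\phi(n^3)-2,\chi_n}$ to $\tfrac{B_{\phi(n)-2,\chi_n}}{\phi(n)-2}$ through Kummer's congruence (Lemma 2.5), which gives $B_{\phi(n^3)-2,\chi_n}\equiv -2\,\tfrac{B_{\phi(n)-2,\chi_n}}{\phi(n)-2}\pmod n$. With these, $knS_1\equiv -2knq_2(n)+kn^2q_2(n)^2-\tfrac{2}{3}kn^3q_2(n)^3-\tfrac{7v^2}{8}kn^3B_{\phi(n^3)-2,\chi_n}\pmod{n^4}$, so that, exactly as in Theorem 3.3 where the $S_1$-powers reassemble a power of $1+nq_2(n)=2^{\phi(n)}$, the terms of $D_n$ free of $S_2,S_3$ collapse to $(1+nq_2(n))^{-2k}=4^{-k\phi(n)}$ together with a single Bernoulli correction. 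Adding the contribution $-\tfrac{k^2n^2}{2}S_2$ and the surviving cubic term $\tfrac{k^3n^3}{3}S_3$ (note $S_1S_2\equiv0\pmod n$) then yields the coefficient $\tfrac14 k(7v^2+14kv+8k^2)$ of $n^3\tfrac{B_{\phi(n)-2,\chi_n}}{\phi(n)-2}$, which is the assertion. The main obstacle is the bookkeeping in the block decomposition: one must expand $(an+i)^{-r}$ to the correct $n$-adic precision in each of the three sums and verify that the block sums $\sum a,\ \sum a^2$ together with the reflection formulae for $F_1,F_2,F_3$ combine to the stated clean congruences for $S_1,S_2,S_3$, after which the exponential-type assembly is routine.
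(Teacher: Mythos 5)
Your proposal is correct, and its global skeleton is exactly the paper's (which proves this "in a way similar to Theorem 3.3"): the M\"obius inversion (3.11), the expansion of $D_n=\prod_{(j,n)=1,\ j\le (vn-1)/2}\left(1+\frac{kn}{j}\right)$ in the power sums $S_1,S_2,S_3$ modulo $n^4$, substitution of the $m=2$ evaluations, Kummer's congruence, and reassembly of the $q_2(n)$-terms into $(1+nq_2(n))^{-2k}=4^{-k\phi(n)}$. Where you genuinely diverge is on the one ingredient that is new relative to Theorem 3.3, namely the extended range $(vn-1)/2$: the paper disposes of it by re-applying Theorems 3.1 and 3.2 with $vn$ in place of $n$ (odd $vn$ gives $\{-vn/2\}=\frac12$ and $J_2(vn)=1$, the factors $v,v^2$ entering through the powers of $vn/2$), recording six congruences such as $T_{2,1}(vn)\equiv 2\left(-q_2(n)+\frac{n}{2}q_2(n)^2-\frac{n^2}{3}q_2(n)^3\right)-\frac{7v^2n^2}{8}B_{\phi(n^3)-2,\chi_n}\ (\bmod\ n^3)$, whereas you split the range into $t=(v-1)/2$ complete residue blocks plus a terminal half-block and use the reflection $i\mapsto n-i$. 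I verified your reductions: the block expansion gives $S_1\equiv tF_1-\frac{t(t-1)}{2}nF_2+T_{2,1}-tnT_{2,2}+t^2n^2T_{2,3}\ (\bmod\ n^3)$, and with your (correct) reflection formulae $F_1\equiv -nT_{2,2}-n^2T_{2,3}$, $F_2\equiv 2T_{2,2}+2nT_{2,3}$, $F_3\equiv 0$ the $T_{2,3}$-terms cancel and $S_1\equiv T_{2,1}(n)-\frac{v^2-1}{4}nT_{2,2}(n)$, $S_2\equiv vT_{2,2}(n)$, $S_3\equiv T_{2,3}(n)$ as you claim; after Corollary 3.1(i), Corollary 3.5(ii) and (3.9) these coincide exactly with the paper's six congruences, and the final tally $\left(\frac{7v^2k}{4}+\frac{7vk^2}{2}+2k^3\right)n^3C$ with $B_{\phi(n^3)-2,\chi_n}\equiv -2C$, $C=\frac{B_{\phi(n)-2,\chi_n}}{\phi(n)-2}$, is indeed $\frac{k(7v^2+14kv+8k^2)}{4}n^3C$. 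As for what each approach buys: the paper's route is a one-line citation once Theorems 3.1--3.2 are available, but the notation $T_{2,k}(vn)$ there is a mild abuse (the character must remain $\chi_n$, and $v$ may be divisible by $3$ or share factors with $n$, so Theorem 3.1 does not literally apply to $vn$ without rerunning its proof with $d\mapsto vd$ in Lemma 2.4); your block-plus-reflection derivation is longer but self-contained, uses only character sums modulo $n$, and is transparently valid for every odd $v$, so it in fact justifies the paper's tersest step rather than merely replacing it.
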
 
     \begin{proof}
     Assume$$
     B_n=\left(\begin{array}{c}
     	kn+[vn /2] \\
     	(vn-1) / 2
     \end{array}\right).
     $$
     Then
     $$
     B_n=\prod_{r=1}^{(vn-1) / 2} \frac{kn+r}{r}=\prod_{d \mid n} \prod_{\substack{r=1 \\(r, n)=d}}^{(vn-1) / 2} \frac{kn+r}{r}=\prod_{d \mid n} S_{n / d}=\prod_{d \mid n} S_d,
     $$
     where
     $$
     S_d=\prod_{\substack{r=1 \\(r, d)=1}}^{(vd-1) / 2} \frac{kd+r}{r}.
     $$
     Using the inversion formula for the Möbius function, we have
     \begin{equation}\tag{3.11}
     	S_n=\prod_{d \mid n} A_d^{\mu(n / d)}=\prod_{d \mid n}\left(\begin{array}{c}
     		kd+[vd /2] \\
     		(vd-1) / 2
     	\end{array}\right)^{\mu(n / d)}.
     \end{equation}
     For $(n,6)=1$, from Theorem 3.1 and Theorem 3.2, we have$$
     T_{2,1}(vn)\equiv2\left(-q_n(2)+\dfrac{n}{2}{q_n(2)}^2-\dfrac{n^2}{3}{q_n(2)}^3\right)-\dfrac{7v^2n^2}{8}B_{\phi(n^3)-2,\chi_n}\quad (\bmod n^3),$$
     $$
     T_{2,1}(vn)\equiv-2q_n(2)+n{q_n(2)}^2\quad (\bmod n^2),$$
     $$
     T_{2,2}(vn)\equiv\dfrac{7vn}{2}B_{\phi(n^2)-2,\chi_n} \quad (\bmod n^2),$$
     $$
     T_{2,1}(vn)\equiv T_{2,1}(n)\quad (\bmod n),$$
     $$
     T_{2,2}(vn)\equiv T_{2,2}(n)\quad (\bmod n),$$
     $$
     T_{2,3}(vn)\equiv T_{2,3}(n)\quad (\bmod n).$$
     Using the above six congruences, Kummer's congruences  and (3.11), we can prove Theorem 3.4 in a way similar to Theorem 3.3. 
     \end{proof}
       
     \begin{cor}[{\cite[Theorem 1.4]{CZC}}]
     For any positive odd integer $k$ and positive integer $n>1$ with $(n,6)=1$, it follows
     $$
      	\prod_{d \mid n}\left(\begin{array}{c}
      		(kd-1) /2 \\
      		(d-1) / 2
      	\end{array}\right)^{\mu(n / d)} \equiv2^{-(k-1) \phi(n)}\quad (\bmod n^3).$$ 
      \end{cor}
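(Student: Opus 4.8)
The plan is to recognize the left-hand side as a direct specialization of Theorem 3.4. Since $(n,6)=1$, every divisor $d\mid n$ is odd, so $[vd/2]=(d-1)/2$ when $v=1$. Setting $v=1$ and writing $K=(k-1)/2$ — a non-negative integer because $k$ is odd — the top entry of the binomial coefficient in Theorem 3.4 becomes
$$Kd+[d/2]=\frac{(k-1)d}{2}+\frac{d-1}{2}=\frac{kd-1}{2},$$
while the bottom entry is $(vd-1)/2=(d-1)/2$. Thus, for $k\ge 3$, the product $\prod_{d\mid n}\binom{(kd-1)/2}{(d-1)/2}^{\mu(n/d)}$ is exactly the quantity appearing on the left of Theorem 3.4 with parameters $v=1$ and $(k-1)/2$ in place of its $k$. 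The case $k=1$ is trivial: then $K=0$, every binomial coefficient equals $1$, and both sides reduce to $1$.

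Next I would substitute these values into the conclusion of Theorem 3.4. The main term simplifies as $4^{-K\phi(n)}=2^{-2K\phi(n)}=2^{-(k-1)\phi(n)}$, which is precisely the right-hand side of the corollary, and the coefficient $k(7v^2+14kv+8k^2)/4$ becomes $K(7+14K+8K^2)/4$. Hence Theorem 3.4 gives
\begin{multline*}
\prod_{d\mid n}\binom{(kd-1)/2}{(d-1)/2}^{\mu(n/d)}\\
\equiv 2^{-(k-1)\phi(n)}+\frac{K(7+14K+8K^2)}{4}\,n^3\,\frac{B_{\phi(n)-2,\chi_n}}{\phi(n)-2}\quad(\bmod\ n^4).
\end{multline*}
It then remains only to pass from modulus $n^4$ to modulus $n^3$, which amounts to discarding the final term.

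The one point requiring care — and the main (though modest) obstacle — is justifying that this discarded term is genuinely $\equiv 0\ (\bmod\ n^3)$, i.e.\ that its coefficient of $n^3$ is a $p$-integer for every prime $p\mid n$. Writing $B_{\phi(n)-2,\chi_n}=B_{\phi(n)-2}\prod_{q\mid n}(1-q^{\phi(n)-3})$, the product over $q$ is an integer and $1/4$ is a $p$-adic unit (as $p\ge 5$), so it suffices to check $B_{\phi(n)-2}/(\phi(n)-2)\in\mathbb{Z}_p$. Now $p\mid n$ forces $(p-1)\mid\phi(n)$, whence $\phi(n)-2\equiv-2\ (\bmod\ p-1)$; since $p\ge 5$ gives $p-1\ge 4$, we have $p-1\nmid\phi(n)-2$, and Lemma 2.1 (with $x=0$) yields $B_{\phi(n)-2}/(\phi(n)-2)\in\mathbb{Z}_p$. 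Consequently the coefficient of $n^3$ is $p$-integral for every $p\mid n$, so that term vanishes modulo $n^3$ and the stated congruence follows.
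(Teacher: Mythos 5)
Your proposal is correct and matches the paper's intended derivation: the paper states this corollary without proof as an immediate specialization of Theorem 3.4, which is exactly what you carry out by taking $v=1$ and $(k-1)/2$ in place of $k$, so that $4^{-\frac{k-1}{2}\phi(n)}=2^{-(k-1)\phi(n)}$ and the $n^3$-term drops out modulo $n^3$. Your explicit verification that the discarded coefficient is $p$-integral for every $p\mid n$ (via $B_{\phi(n)-2,\chi_n}=B_{\phi(n)-2}\prod_{q\mid n}\bigl(1-q^{\phi(n)-3}\bigr)$, the observation $(p-1)\mid\phi(n)$ with $p\ge 5$, and Lemma 2.1), together with the separate treatment of the trivial case $k=1$, supplies details the paper leaves implicit.
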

    \begin{cor}[{\cite[Corollary 0.2]{JZC}}]
    Let $p > 5$ be a prime and $k$  be a positive integer, then $$
    \left(\begin{array}{c}
    	kp+\frac{p-1}{2}  \\
    	(p-1) / 2
    \end{array}\right)\equiv 4^{-k (p-1)}-\frac{k(7+14k+8k^2)}{12}p^3B_{p-3}\quad (\bmod p^4).$$
    \end{cor}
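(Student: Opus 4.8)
The plan is to obtain this corollary as the specialization of Theorem 3.4 to the single prime $n=p$ (with $p>5$) and odd parameter $v=1$. First I would evaluate the M\"obius product on the left-hand side of Theorem 3.4. The only divisors of $p$ are $1$ and $p$, carrying exponents $\mu(p)=-1$ and $\mu(1)=1$ respectively. For $d=1$ the corresponding factor is $\binom{k+[v/2]}{(v-1)/2}=\binom{k}{0}=1$, so it drops out; for $d=p$, since $p$ is odd we have $[p/2]=(p-1)/2$, and the factor is $\binom{kp+(p-1)/2}{(p-1)/2}$. Thus the product collapses to $\binom{kp+(p-1)/2}{(p-1)/2}$, which is exactly the left-hand side of the corollary.

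Next I would simplify the right-hand side of Theorem 3.4 under $n=p$ and $v=1$. Here $\phi(p)=p-1$, so $4^{-k\phi(n)}=4^{-k(p-1)}$, and the coefficient $k(7v^2+14kv+8k^2)/4$ becomes $k(7+14k+8k^2)/4$. It remains to rewrite the factor $n^3 B_{\phi(n)-2,\chi_n}/(\phi(n)-2)=p^3 B_{p-3,\chi_p}/(p-3)$ in terms of an ordinary Bernoulli number. Since the entire term already carries $p^3$ and the congruence is modulo $p^4$, it suffices to reduce $B_{p-3,\chi_p}/(p-3)$ modulo $p$. Exactly as in the proof of Corollary 3.9, the identity $B_{p-3,\chi_p}=B_{p-3}\prod_{q\mid p}(1-q^{p-4})=B_{p-3}(1-p^{p-4})$ together with $p^{p-4}\equiv 0 \,(\bmod p)$ (valid because $p\ge 5$) and $p-3\equiv -3 \,(\bmod p)$ gives $B_{p-3,\chi_p}/(p-3)\equiv -B_{p-3}/3 \,(\bmod p)$. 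Substituting this, the right-hand side of Theorem 3.4 becomes
$$
4^{-k(p-1)}-\frac{k(7+14k+8k^2)}{12}\,p^3 B_{p-3}\quad (\bmod p^4),
$$
which is precisely the claimed expression.

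I do not expect a genuine obstacle, as the result is a direct specialization whose structure mirrors that of Corollary 3.9 (the analogue for Theorem 3.3). The only points requiring care are the bookkeeping of the M\"obius product---verifying that the $d=1$ factor is indeed $1$ and that $[p/2]=(p-1)/2$ for odd $p$---and the passage from $B_{p-3,\chi_p}$ to $B_{p-3}$, which rests on the divisor-product identity for generalized Bernoulli numbers and on Kummer's congruence in the form already used throughout the paper (the von Staudt--Clausen theorem).
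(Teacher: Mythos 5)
Your proof is correct and follows essentially the same route as the paper, which states this corollary as the direct specialization $n=p$, $v=1$ of Theorem 3.4 and (as in its explicit proof of Corollary 3.9) reduces $\frac{B_{p-3,\chi_p}}{p-3}\equiv-\frac{B_{p-3}}{3}\ (\bmod\ p)$ via the identity $B_{s,\chi_n}=B_s\prod_{q\mid n}(1-q^{s-1})$. Your bookkeeping of the M\"obius product (the $d=1$ factor being $\binom{k}{0}=1$ and $[p/2]=(p-1)/2$) and the resulting sign are all accurate.
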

     \begin{thm}
     Let $n >1$ be a positive integer with $(n,6)=1,$ for any integers $u>v>0$, we have
     \begin{equation}
     	\prod_{d \mid n}\left(\begin{array}{l}
     		u d \\
     		v d
     	\end{array}\right)^{\mu(n / d)} \equiv 1-uv(v-u)\frac{B_{\phi(n)-2,\chi_n}}{\phi(n)-2}n^3 \quad (\bmod n^4).\notag
     \end{equation}	
     \end{thm}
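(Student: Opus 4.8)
The plan is to follow the template of Theorems 3.3 and 3.4: collapse the left-hand product to a single building block by Möbius inversion, expand that block as a power series in $n$, and feed the resulting power sums into the congruences for $T_{1,k}(n)$ from Lemma 2.6. The starting point is the combinatorial identity
$$\binom{un}{vn}=\prod_{r=1}^{vn}\frac{(u-v)n+r}{r},$$
which I would split according to $(r,n)=d$. Writing $r=ds$ with $(s,n/d)=1$ turns the $d$-block into $\prod_{(s,\,n/d)=1,\ s\le v(n/d)}\frac{(u-v)(n/d)+s}{s}=:C_{n/d}$, so that $\binom{un}{vn}=\prod_{d\mid n}C_d$ with $C_d=\prod_{(r,d)=1,\ r\le vd}\frac{(u-v)d+r}{r}$. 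The multiplicative Möbius inversion then identifies the left side of the theorem with
$$C_n=\prod_{\substack{r=1\\(r,n)=1}}^{vn}\Bigl(1+\frac{(u-v)n}{r}\Bigr).$$

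Next I would expand this product modulo $n^4$ as $C_n\equiv 1+(u-v)n\,e_1+(u-v)^2n^2e_2+(u-v)^3n^3e_3$, where $e_j$ is the $j$-th elementary symmetric function of the numbers $1/r$; terms with $j\ge4$ vanish since each $1/r$ is $p$-integral, and only $e_1\ (\mathrm{mod}\ n^3)$, $e_2\ (\mathrm{mod}\ n^2)$, $e_3\ (\mathrm{mod}\ n)$ survive. Newton's identities rewrite the $e_j$ through the power sums $p_k=\sum_{(r,n)=1,\ r\le vn}r^{-k}$, and each $p_k$ is reduced to $T_{1,k}(n)=\sum_{(i,n)=1}i^{-k}$ by the block decomposition $r=jn+i$ $(0\le j\le v-1,\ (i,n)=1)$ together with $(jn+i)^{-k}\equiv i^{-k}(1-kjn/i+\cdots)$, whose coefficients are the sums $\sum_j j^{t}$.

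Inserting Lemma 2.6 in the sharp form obtained in its proof, namely $T_{1,k}(n)\equiv nB_{\phi(n^3)-k,\chi_n}-\tfrac12 kn^2B_{\phi(n^3)-k-1,\chi_n}\ (\mathrm{mod}\ n^3)$, the vanishing of the odd-index generalized Bernoulli numbers gives $T_{1,1}(n)\equiv-\tfrac12 n^2B_{\phi(n^3)-2,\chi_n}$, $T_{1,2}(n)\equiv nB_{\phi(n^3)-2,\chi_n}$ and $T_{1,3}(n)\equiv0\ (\mathrm{mod}\ n)$. A short computation then collapses the $e_1$-contribution to $-\tfrac{v^2}{2}n^2B_{\phi(n^3)-2,\chi_n}$ and the $e_2$-contribution to $-\tfrac12 vnB_{\phi(n^3)-2,\chi_n}$, while $e_3\equiv0\ (\mathrm{mod}\ n)$. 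Combining the three and using $v+(u-v)=u$ yields
$$C_n\equiv 1+\frac{uv(v-u)}{2}\,n^3B_{\phi(n^3)-2,\chi_n}\quad(\mathrm{mod}\ n^4).$$

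Finally I would rewrite $B_{\phi(n^3)-2,\chi_n}$ in the form stated. Since $\phi(n^3)=n^2\phi(n)$, Kummer's congruence (Lemma 2.5) gives $\tfrac{B_{\phi(n^3)-2,\chi_n}}{\phi(n^3)-2}\equiv\tfrac{B_{\phi(n)-2,\chi_n}}{\phi(n)-2}\ (\mathrm{mod}\ n)$, and $\phi(n^3)-2\equiv-2\ (\mathrm{mod}\ n)$, so $B_{\phi(n^3)-2,\chi_n}\equiv-2\,\tfrac{B_{\phi(n)-2,\chi_n}}{\phi(n)-2}\ (\mathrm{mod}\ n)$; multiplying by $\tfrac{uv(v-u)}{2}n^3$ turns the displayed congruence into the asserted one. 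The genuinely delicate point is this final index shift, where getting the factor $-2$ exactly right is what fixes the coefficient. The other main source of difficulty is bookkeeping: tracking the precise modulus to which each $p_k$ and each $T_{1,k}(n)$ must be known so that every term is controlled to $O(n^4)$, and checking the $p$-integrality and the $n\in I(k,2)$-type hypotheses needed to apply Lemmas 2.1, 2.5 and 2.6 under the single assumption $(n,6)=1$.
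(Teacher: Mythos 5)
Your proposal is correct and takes essentially the same route as the paper's proof: after the same Möbius-inversion collapse (the paper writes $\binom{un}{vn}=\frac{u}{v}\binom{un-1}{vn-1}$ and works with factors $\frac{un-r}{r}$ over $r\le vn-1$, $(r,n)=1$, while you use $\frac{(u-v)n+r}{r}$ — a cosmetic variant of the same decomposition), its congruences (3.13)--(3.15) are exactly your power sums $p_1\equiv-\tfrac{v^2}{2}n^2B_{\phi(n^3)-2,\chi_n}\ (\bmod\ n^3)$, $p_2\equiv vnB_{\phi(n^3)-2,\chi_n}\ (\bmod\ n^2)$, $p_3\equiv0\ (\bmod\ n)$, derived via the same block decomposition $r=i+jn$ and the sharp form of Lemma 2.6 (you rightly use the pre-simplified congruence from its proof rather than the typo-marred statement). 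Your explicit final index shift $B_{\phi(n^3)-2,\chi_n}\equiv-2\,B_{\phi(n)-2,\chi_n}/(\phi(n)-2)\ (\bmod\ n)$ is precisely the step the paper compresses into ``using Kummer's congruences,'' so the argument matches the paper's in substance throughout.
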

    \begin{proof}
    Define$$ 
    G_n=\left(\begin{array}{l}
    	u n \\
    	v n
    \end{array}\right).
    $$
    Then we have
    $$
    G_n=\frac{u}{v}\left(\begin{array}{c}
    	u n-1 \\
    	v n-1
    \end{array}\right)=\frac{u}{v} \prod_{d \mid n} \prod_{\substack{r=1 \\
    		(r, n)=d}}^{v n-1} \frac{u n-r}{r}=\frac{u}{v} \prod_{d \mid n} V_{n / d}=\frac{u}{v} \prod_{d \mid n} V_d,
    $$
    where
    $$
    V_d=\prod_{\substack{r=1 \\(r, d)=1}}^{v d-1} \frac{u d-r}{r}.
    $$
    Using the inversion formula for the Möbius function, we have
    \begin{equation}\tag{3.12}
    	V_n=\prod_{d \mid n} (\frac{u}{v}G_d)^{\mu(n / d)}=(\frac{u}{v})^{\sum_{d/n}\mu(n / d)}\prod_{d \mid n}G_d^{\mu(n / d)}=\prod_{d \mid n} G_d^{\mu(n / d)}=\prod_{d \mid n}\left(\begin{array}{l}
    		u d \\
    		v d
    	\end{array}\right)^{\mu(n / d)}.
    \end{equation}
    By Euler's theorem, Lemma 2.6 and the von Staudt-Clausen theorem, we get
    \begin{align}\tag{3.13}
    	\sum_{\substack{r=1 \\
    			(r, n)=1}}^{vn-1} \frac{1}{r}&=\sum_{i=0}^{v-1}\sum_{\substack{r=1 \\
    			(r, n)=1}}^{n-1} \frac{1}{r+in}\equiv \sum_{i=0}^{v-1}\sum_{r=1}^{n-1}(r+in)^{\phi (n^3)-1}\notag\\
    	&\equiv\sum_{i=0}^{v-1}\left(\sum_{r=1}^{n-1}\frac{\chi_n(r)}{r}-in\sum_{r=1}^{n-1}\frac{\chi_n(r)}{r^2}+(in)^2\sum_{r=1}^{n-1}\frac{\chi_n(r)}{r^3}\right)\notag \\
    	&\equiv	\sum_{i=0}^{v-1}\left(-\frac{n^2}{2}B_{\phi (n^3)-2,\chi_n}-in^2B_{\phi (n^3)-2,\chi_n}\right)\notag \\
    	&\equiv{-\frac{v^2}{2}n^2B_{\phi (n^3)-2,\chi_n}}\quad (\bmod n^3).\notag
    \end{align}
    Similarly, we have
    \begin{align}\tag{3.14}
    	\sum_{\substack{r=1 \\
    			(r, n)=1}}^{vn-1} \frac{1}{r^2}&=\sum_{i=0}^{v-1}\sum_{\substack{r=1 \\
    			(r, n)=1}}^{n-1} \frac{1}{(r+in)^2}\equiv \sum_{i=0}^{v-1}\sum_{r=1}^{n-1}(r+in)^{\phi (n^2)-2}\notag\\
    	&\equiv\sum_{i=0}^{v-1}\left(\sum_{r=1}^{n-1}\frac{\chi_n(r)}{r^2}-2in\sum_{r=1}^{n-1}\frac{\chi_n(r)}{r^3}\right)\notag \\
    	&\equiv{vnB_{\phi (n^3)-2,\chi_n}}\quad (\bmod n^2)\notag
    \end{align}
    and
    \begin{align}\tag{3.15}
    	\sum_{\substack{r=1 \\
    			(r, n)=1}}^{vn-1} \frac{1}{r}\equiv0\quad (\bmod n).
    \end{align}
    Using (3.13), (3.14), (3.15), (3.12) and Kummer's congruences,  we can prove Theorem 3.5 in a way similar to Theorem 3.3.
    \end{proof}
      \begin{cor}[{\cite[Theorem 3]{Cai}}]
      	For any odd integer $n>1$ with $(n,6)=1$ and integers $u>v>0$, we have$$	
      		\prod_{d \mid n}\left(\begin{array}{l}
      			u d \\
      			v d
      		\end{array}\right)^{\mu(n / d)} \equiv 1 \quad (\bmod n^3).$$
      \end{cor}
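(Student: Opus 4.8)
The plan is to mirror the combinatorial reduction used for Theorem 3.3. First I would set $G_n=\binom{un}{vn}$ and use $\binom{un}{vn}=\frac{u}{v}\binom{un-1}{vn-1}$ to peel off the factor $\frac{u}{v}$, then sort the remaining factors $\frac{un-r}{r}$ $(1\le r\le vn-1)$ according to $d=(r,n)$. Writing $V_d=\prod_{\substack{r=1\\(r,d)=1}}^{vd-1}\frac{ud-r}{r}$ this yields $G_n=\frac{u}{v}\prod_{d\mid n}V_d$, and the multiplicative M\"obius inversion gives $V_n=\prod_{d\mid n}\left(\frac{u}{v}G_d\right)^{\mu(n/d)}$. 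Since $\sum_{d\mid n}\mu(n/d)=0$ for $n>1$, the spurious factor $\frac{u}{v}$ disappears and $V_n=\prod_{d\mid n}\binom{ud}{vd}^{\mu(n/d)}$, as in (3.12); so it suffices to evaluate $V_n$ modulo $n^4$.

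For that I would write $V_n=\prod_{\substack{r=1\\(r,n)=1}}^{vn-1}\left(1-\frac{un}{r}\right)$, the sign being $+1$ because the number of admissible $r$ equals $v\phi(n)$, which is even. Expanding the product to third order in $n$ and discarding terms of order $n^4$ reduces the problem to the three power sums $S_j=\sum_{\substack{r=1,(r,n)=1}}^{vn-1}r^{-j}$, grouped exactly as in Theorem 3.3 into the elementary symmetric combinations $\frac12(S_1^2-S_2)$ and $\frac16(S_1^3-3S_1S_2+2S_3)$. The key inputs are then (3.13)--(3.15): partitioning $[1,vn-1]$ into the $v$ residue blocks $r+in$ $(0\le i\le v-1)$, applying Euler's theorem together with the expansion $\frac{1}{(r+in)^j}\equiv r^{-j}-\tfrac{jin}{r^{j+1}}+\cdots$, and invoking Lemma 2.6 on each inner sum $\sum_{r=1}^{n-1}\chi_n(r)r^{-j}$, one obtains $S_1\equiv-\frac{v^2}{2}n^2B\pmod{n^3}$, $S_2\equiv vnB\pmod{n^2}$ and $S_3\equiv0\pmod n$, where $B=B_{\phi(n^3)-2,\chi_n}$.

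I expect the delicate point to be the evaluation of $S_1\pmod{n^3}$. Here one must retain the inner term $\sum_{r=1}^{n-1}\chi_n(r)/r$, whose value modulo $n^3$ is $-\frac{n^2}{2}B_{\phi(n^3)-2,\chi_n}$ --- the leading nonzero contribution coming from the index $\phi(n^3)-2$ rather than from the vanishing odd-order $\phi(n^3)-1$ term --- \emph{and} the $\sum_i i\,\chi_n(r)/r^2$ contribution, then check that the two combine through $v+v(v-1)=v^2$ to produce the clean coefficient $-\frac{v^2}{2}$. With the three power sums in hand, the cubic symmetric combination is $\equiv0\pmod n$, so only the linear and quadratic terms of the expansion survive, and collecting them gives $V_n\equiv1+\frac{uv(v-u)}{2}n^3B_{\phi(n^3)-2,\chi_n}\pmod{n^4}$. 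Throughout, careful precision bookkeeping (which $S_j$ is needed to which power of $n$) is essential.

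Finally I would convert this into the stated form. Since the correction term already carries a factor $n^3$, only the residue of its coefficient modulo $n$ matters; from $\phi(n^3)=n^2\phi(n)\equiv0\pmod{n^2}$ one gets $\tfrac12(\phi(n^3)-2)\equiv-1\pmod n$, while Kummer's congruence (Lemma 2.5) in the form $\frac{B_{\phi(n^3)-2,\chi_n}}{\phi(n^3)-2}\equiv\frac{B_{\phi(n)-2,\chi_n}}{\phi(n)-2}\pmod n$ turns $\frac12B_{\phi(n^3)-2,\chi_n}$ into $-\frac{B_{\phi(n)-2,\chi_n}}{\phi(n)-2}$. Substituting yields $V_n\equiv1-uv(v-u)\frac{B_{\phi(n)-2,\chi_n}}{\phi(n)-2}n^3\pmod{n^4}$, which together with (3.12) is Theorem 3.5.
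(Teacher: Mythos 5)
Your proposal is correct and follows essentially the same route as the paper: the corollary is just Theorem 3.5 reduced modulo $n^3$ (the correction term carries the factor $n^3$ with an $n$-integral coefficient), and your argument for Theorem 3.5 reproduces the paper's own proof --- the decomposition $G_n=\frac{u}{v}\prod_{d\mid n}V_d$ with M\"obius inversion as in (3.12), the block-wise power-sum evaluations (3.13)--(3.15) via Euler's theorem and Lemma 2.6 (including the key point that $\sum_{r=1}^{n-1}\chi_n(r)/r\equiv-\frac{n^2}{2}B_{\phi(n^3)-2,\chi_n}\pmod{n^3}$ and the $v+v(v-1)=v^2$ recombination), and the final passage from $B_{\phi(n^3)-2,\chi_n}/2$ to $-B_{\phi(n)-2,\chi_n}/(\phi(n)-2)$ by Kummer's congruence. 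Your bookkeeping of which $S_j$ is needed modulo which power of $n$ matches the paper's, so there is no gap.
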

  
      \begin{cor}
      Let $p \ge 5$ be a prime, for any integer $u>v>0$, we have
      \begin{equation}
      	\left(\begin{array}{l}
      		u p \\
      		v p
      	\end{array}\right)/\left(\begin{array}{l}
      		u  \\
      		v 
      	\end{array}\right)=\left(\begin{array}{l}
      		u p-1 \\
      		v p-1
      	\end{array}\right) \equiv 1+\frac{uv(v-u)}{3}B_{p-3}p^3 \quad (\bmod p^4).\notag 
      \end{equation}	
      \end{cor}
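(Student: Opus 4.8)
The plan is to derive this corollary directly from Theorem 3.5 by setting $n=p$, so that the only tasks are to simplify the Möbius-inverted product for a prime modulus and to rewrite the generalized Bernoulli term as an ordinary Bernoulli number.

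First I would evaluate $\prod_{d\mid n}\binom{ud}{vd}^{\mu(n/d)}$ at $n=p$. Since $p$ has only the two divisors $1$ and $p$, with $\mu(p/p)=\mu(1)=1$ and $\mu(p/1)=\mu(p)=-1$, the product collapses to $\binom{up}{vp}\binom{u}{v}^{-1}$, which is precisely the quantity $V_p$ appearing in the proof of Theorem 3.5 and is the left member of the corollary. Its rewriting in shifted-binomial form is then recorded from the elementary identity $\binom{up}{vp}=\tfrac{u}{v}\binom{up-1}{vp-1}$.

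Next I would specialize the right-hand side. For $n=p$ one has $\phi(p)=p-1$, hence $\phi(n)-2=p-3$, and by the factorization $B_{s,\chi_n}=B_s\prod_{q\mid n}(1-q^{s-1})$ quoted in the introduction, taking $n=p$ and $s=p-3$ gives $B_{p-3,\chi_p}=B_{p-3}\bigl(1-p^{\,p-4}\bigr)$. Because the correction term in Theorem 3.5 already carries the factor $p^3$ and the congruence is read modulo $p^4$, it suffices to reduce $\dfrac{B_{p-3,\chi_p}}{p-3}$ modulo $p$. Here $1-p^{\,p-4}\equiv1\pmod p$ since $p\ge5$ forces $p-4\ge1$, and $p-3\equiv-3\pmod p$, so $\dfrac{B_{p-3,\chi_p}}{p-3}\equiv-\dfrac{B_{p-3}}{3}\pmod p$. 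Substituting this into $1-uv(v-u)\dfrac{B_{p-3,\chi_p}}{p-3}\,p^3$ converts the minus sign into a plus and produces $1+\dfrac{uv(v-u)}{3}B_{p-3}\,p^3\pmod{p^4}$, which is the claimed congruence.

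The computation is routine; the only step demanding care is the reduction in the last paragraph. One must check that $B_{p-3}$ is $p$-integral, which holds by the von Staudt--Clausen theorem precisely because $p-1\nmid p-3$ for $p\ge5$, so that dividing by $p-3\equiv-3$ is legitimate modulo $p$. This $p\ge5$ hypothesis is exactly what validates the Kummer-type passage from the generalized Bernoulli number $B_{p-3,\chi_p}$ to the ordinary Bernoulli number $B_{p-3}$.
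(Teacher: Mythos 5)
Your derivation is correct and is exactly the route the paper intends: the paper states this corollary without a written proof, and the analogous Corollary 3.9 is proved by precisely your two steps --- specialize the theorem (there Theorem 3.3, here Theorem 3.5) at $n=p$ so the M\"obius product collapses to the single quotient, then reduce $\frac{B_{p-3,\chi_p}}{p-3}\equiv-\frac{B_{p-3}}{3}\pmod p$ via $B_{s,\chi_p}=B_s(1-p^{s-1})$, which is harmless modulo $p^4$ because of the factor $p^3$. Your sign bookkeeping ($1-uv(v-u)(-\tfrac{B_{p-3}}{3})p^3=1+\tfrac{uv(v-u)}{3}B_{p-3}p^3$) and the von Staudt--Clausen justification of $p$-integrality are both right.

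One caveat, concerning the middle equality rather than the congruence: the identity $\binom{up}{vp}=\frac{u}{v}\binom{up-1}{vp-1}$ does \emph{not} yield $\binom{up}{vp}/\binom{u}{v}=\binom{up-1}{vp-1}$; since also $\binom{u}{v}=\frac{u}{v}\binom{u-1}{v-1}$, it yields $\binom{up}{vp}/\binom{u}{v}=\binom{up-1}{vp-1}/\binom{u-1}{v-1}$, which equals $\binom{up-1}{vp-1}$ only when $\binom{u-1}{v-1}=1$, i.e.\ $v=1$ (Glaisher's case, quoted after the corollary). Indeed the stated equality fails already for $u=3$, $v=2$, $p=5$: one has $\binom{15}{10}/\binom{3}{2}=1001$ while $\binom{14}{9}=2002$, and modulo $5^4$ it is the quotient $\binom{up}{vp}/\binom{u}{v}$, not $\binom{up-1}{vp-1}$, that satisfies the claimed congruence. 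This defect is inherited from the paper's own statement, but you should not paper over it with the shifted-binomial identity; either restrict that equality to $v=1$ or prove the congruence only for the quotient $\binom{up}{vp}/\binom{u}{v}$, which is what Theorem 3.5 actually delivers.
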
 
      In particular, for $v=1$, $u \ge 2$, we get the same conclusion proved by Glaisher \cite{G90} in 1990. That is 
      \begin{equation}
      	\left(\begin{array}{l}
      		u p-1 \\
      		p-1
      	\end{array}\right) \equiv 1-\frac{u(u-1)}{3}B_{p-3}p^3 \quad (\bmod p^4).\notag 
      \end{equation}
     \normalsize
    \baselineskip=17pt
    \bibliographystyle{alpha}
    \bibliography{reference}
\end{document}